\numberwithin{equation}{section}
\newtheorem{theorem}{Theorem}[section]
\newtheorem{corollary}{Corollary}[section]
\newtheorem{lemma}{Lemma}[section]
\newtheorem{proposition}{Proposition}[section]
\newtheorem{remark}{Remark}[section]
\renewcommand{\P}{\mathbb{P}}
\newcommand{\R}{\mathbb{R}}
\newcommand{\E}{\mathbb{E}}
\newcommand{\N}{\mathbb{N}}
\newcommand{\F}{\mathcal{F}}
\newcommand{\T}{\mathcal{T}}
\newcommand{\C}{\mathcal{C}}
\newcommand{\eps}{\varepsilon}
\newcommand{\nada}[1]{}
\definecolor{gb}{rgb}{0, 0.2, 0.8}
\title{Optimal Consumption in the Stochastic Ramsey Problem\\ without Boundedness Constraints
}
\author{Yu-Jui Huang\thanks{
University of Colorado, Department of Applied Mathematics, Boulder, CO 80309-0526, USA, email: \texttt{yujui.huang@colorado.edu}. Partially supported by National Science Foundation (DMS-1715439) and the University of Colorado (11003573).}
 \and Saeed Khalili\thanks{
University of Colorado, Department of Mathematics, Boulder, CO 80309-0395, USA, email: \texttt{saeed.khalili@colorado.edu}.}
}
\date{\today}
\begin{document}
\maketitle

\begin{abstract}
This paper investigates optimal consumption in the stochastic Ramsey problem with the Cobb-Douglas production function. Contrary to prior studies, we allow for general consumption processes, without any a priori boundedness constraint. A non-standard stochastic differential equation, with neither Lipschitz continuity nor linear growth, specifies the dynamics of the controlled state process. A mixture of probabilistic arguments are used to construct the state process, and establish its non-explosiveness and strict positivity. This leads to the optimality of a feedback consumption process, defined in terms of the value function and the state process. Based on additional viscosity solutions techniques, we characterize the value function as the unique classical solution to a nonlinear elliptic equation, among an appropriate class of functions. This characterization involves a condition on the limiting behavior of the value function at the origin, which is the key to dealing with unbounded consumptions. Finally, relaxing the boundedness constraint is shown to increase, strictly, the expected utility at all wealth levels. 
\end{abstract}

\textbf{MSC (2010):} 
91B62 
93E20. 
\smallskip

\textbf{Keywords:} optimal consumption, stochastic Ramsey problem, viscosity solutions, Feller's test for explosion, pathwise uniqueness.


\section{Introduction}
In the economic growth theory, capital stock of a society amounts to the total value of assets that can be used to produce goods and services, such as factories, equipment, and monetary resources. Whereas capital can be consumed to give individuals immediate welfare, it can also be used to generate more capital and thus sustain economic growth, which enhances future welfare. As Ramsey \cite{Ramsey28} pointed out in a deterministic model, sensible financial planning, regarding consumption and saving of capital, is imperative to strike a balance between current and future welfare. In a continuous-time setting, Merton \cite{Merton75} enriched the problem by considering stochastic evolution of the population in a society. 

The stochastic Ramsey problem, coined by Merton \cite{Merton75}, has been investigated in the stochastic control literature through viscosity solution techniques, Banach's fixed-point argument, and the combination of both; see e.g. Morimoto and Zhou \cite{MZ08}, Morimoto \cite{Morimoto08, Morimoto-book-10}, and Liu \cite{Liu14}, among others. Surprisingly, many of these works require an {\it a priori} uniform upper bound, usually the constant $1$, for consumption processes $\{c_t\}_{t\ge 0}$. This is implicitly suggested in the problem formulation of \cite{Merton75}, and explicitly stated as $0\le c_t\le 1$ in \cite{MZ08} and \cite{Liu14}. While this uniform upper bound provides technical conveniences, 
it can {\it not} be fully justified economically in continuous time. After all, for each $t\ge 0$, $c_t$ represents the consumption ratio {\it per unit of time} instantly at time $t$, which does not admit any natural upper bound. This is in contrast to the discrete-time setting where the upper bound 1 can be easily justified. Morimoto \cite{Morimoto08, Morimoto-book-10} consider general, unbounded consumption processes, but not without a cost. There, the production function in the Ramsey model is required to have finite first derivatives, along a boundary of its domain. This particularly rules out the standard Cobb-Douglas production function, commonly used in economic modeling. 

In other words, a tradeoff exists between the viscosity solutions approach in \cite{MZ08, Liu14} and Banach's fixed-point argument in \cite{Morimoto08, Morimoto-book-10}. The former accommodates the classical Cobb-Douglas production function, but is limited to uniformly bounded consumption processes; the latter allows for general consumptions,  but fails to cover the Cobb-Douglas production function. We aim to resolve this tradeoff: this paper considers {\it both} unbounded consumption processes and the Cobb-Douglas production function, in the stochastic Ramsey problem. The goal is to characterize the associated value function $V$, as well as a (possibly unbounded) optimal consumption process $\hat c$. 


The upfront challenge of our studies is the non-standard stochastic differential equation (SDE) of the state process $X$, which represents {\it capital per capita}; see \eqref{X} below. On the one hand, the Cobb-Douglas production function renders the drift coefficient of $X$ non-Lipschitz (see Section~\ref{subsec:comparison} for a comparison with the Lipschitz case \cite{Morimoto08, Morimoto-book-10}). On the other hand, the unboundedness of consumptions may induce superlinear growth in the same drift coefficient, in contrast to \cite{MZ08, Liu14} where linear growth is guaranteed (see Remark~\ref{rem:linear growth in MZ08}). With {\it neither} Lipschitz {\it nor} linear growth condition, standard techniques for SDEs cannot be applied. Instead, we investigate the existence and uniqueness of $X$, by constructing solutions directly. In Proposition~\ref{prop:existence} and Corollary~\ref{coro:X=0}, we establish the existence of $X$, yet observe that the uniqueness fails in general. Based on the construction of $X$, we also derive moment estimates in Proposition~\ref{prop:estimates X}, without resorting to linear growth condition.

With the state process $X$ constructed, we proceed to relate our value function $V$ to a differential equation. Our strategy is to approximate $V$ by $V_L$, the value function when one is restricted to consumption processes uniformly bounded by $L>0$. By generalizing arguments in \cite{MZ08} to infinite horizon, $V_L$ is shown to be a classical solution to a nonlinear elliptic equation (Proposition~\ref{prop:properties V_L}). As $L\to\infty$, we prove that $V_L$ converges to $V$ desirably, such that $V$ is a classical solution to the limiting nonlinear elliptic equation (Proposition~\ref{prop:classical V_infty} and Theorem~\ref{thm:V=V_infty}).

There are two remaining tasks: (i) to find an optimal consumption process $\hat c$, and (ii) to characterize $V$ further as the {\it unique} classical solution among a certain class of functions.

While $\hat c$ can be heuristically derived in feedback form (i.e. $\hat c_t = \hat c(X_t)$), it is highly nontrivial whether the controlled state process $X^{\hat c}$ is well-defined. First, whether $X^{\hat c}$ exists is unclear: The aforementioned existence result of $X$ does not apply here, as the current control process $\hat c$ is not {\it a priori} given, but depends on the unknown $X$. Second, even if $X^{\hat c}$ exists, it is in question whether the dire situation ``$X^{\hat c}_t =0$ for some $t> 0$'' (i.e. the society using up all its capital at time $t$) can be avoided. A careful construction of $X^{\hat c}$, along with a detailed analysis on its explosion and pathwise uniqueness, is carried out in Proposition~\ref{prop:X'}. It shows that $X^{\hat c}$ is indeed a well-defined strictly positive process, on the strength of Feller's test for explosion and a mixture of probabilistic arguments in Nakao \cite{Nakao72} and Yamada \cite{Yamada73}. Now, with $X^{\hat c}$ well-defined and $V$ solving a nonlinear elliptic equation, a standard verification argument establishes the optimality of $\hat c$. 

Note that the construction of $X^{\hat c}$ was done with much more ease in \cite{MZ08}, through a change of measure. 
This works, however, only with bounded consumptions and finite time horizon. That is, Proposition~\ref{prop:X'} complements \cite{MZ08}, by providing a new, different construction that is general enough to accommodate both unbounded consumptions and infinite horizon; see Remark~\ref{rem:Girsanov} for details.

In fact, the construction in Proposition~\ref{prop:X'} can be made much more general. For any $u\in C^1((0,\infty))$ that is strictly increasing, concave, and whose behavior at $0+$ satisfies \eqref{C*/x^alpha'} below, we can construct from $u$ a candidate optimal consumption $\hat c^u$, and show that the state process $X^{\hat c^u}$ is well-defined and strictly positive; see Corollary~\ref{coro:X' with u} and \eqref{hat c^u}. With the aid of a verification argument, this leads to the {\it full} characterization: $V$ is the unique classical solution to a nonlinear elliptic equation among the class of functions $u\in C^2((0,\infty))\cap C([0,\infty))$ that are strictly increasing, concave, satisfying \eqref{C*/x^alpha'} and the linear growth condition; see Theorem~\ref{thm:main}. 

In \cite{MZ08}, where consumptions are uniformly bounded, the value function is only shown to be a classical solution, with no further characterization. Theorem~\ref{thm:main} fills this void, in a more general setting with unbounded consumptions; see Remark~\ref{rem:fills void}. Specifically, the identification of \eqref{C*/x^alpha'} in Theorem~\ref{thm:main} is the key to dealing with unbounded consumptions. If one is restricted to bounded consumptions (as in \cite{MZ08}), there is no need to impose \eqref{C*/x^alpha'}; see Remark~\ref{rem:characterize V_L}. 


Finally, we compare our no-constraint optimal consumption $\hat c$ with the optimal $\hat c^L$ in \cite{MZ08}, bounded by $L>0$. Two questions are particularly of interest. First, by switching from the bounded strategy $\hat c^L$ to the possibly unbounded $\hat c$, can we truly increase our expected utility? An affirmative answer is provided in Proposition~\ref{prop:V>V_L}: expected utility rises at {\it all} levels of wealth (capital per capita), whenever $\hat c$ is truly unbounded. This justifies economically the use of unbounded strategies. Second, for each $L>0$, do agents following $\hat c^L$ simply chop the no-constraint optimal strategy $\hat c$ at the bound $L>0$? Corollary~\ref{coro:statement not true} shows that the relation ``$\hat c^L = \hat c \wedge L$'' fails in general, suggesting a more structural change from $\hat c^L$ to $\hat c$. For the isoelastic utility function $U(x)=\frac{x^{1-\gamma}}{1-\gamma}$, $0<\gamma<1$, we demonstrate the above two results fairly explicitly.  

The paper is organized as follows. Section~\ref{sec:model} introduces the stochastic Ramsey problem with general unbounded consumptions. Section~\ref{sec:X} investigates the existence and uniqueness of the state process $X$, and derives moment estimates of it. Section~\ref{sec:V_L} shows that the value function $V$ is a classical solution to a nonlinear elliptic equation. Section~\ref{sec:optimal consumption} finds an optimal consumption $\hat c$, and establishes a full characterization of $V$. Section~\ref{sec:comparison} compares our results with previous literature with bounded consumptions. Appendix~\ref{sec:appendix} generalizes arguments in \cite{MZ08} to infinite horizon.


\section{The Model}\label{sec:model}
Consider the canonical space $\Omega := \{\omega\in C([0,\infty);\R)\mid \omega_0=0\}$ of continuous paths starting with value $0$. Let $W$ be the canonical process on $\Omega$, $\P$ be the Wiener measure, and $\mathbb{F}= \{\F_t\}_{t\ge 0}$ be the $\P$-augmentation of the natural filtration generated by $W$. Given $t>0$ and $\omega\in\Omega$, for any $\bar\omega\in\Omega$, we define the concatenation of $\omega$ and $\bar\omega$ at time $t$ as
\begin{equation}\label{concatenate}
(\omega\otimes_t\bar\omega)_r := \omega_r 1_{[0,t]}(r) + (\bar\omega_{r-t}+\omega_t)1_{(t,\infty)}(r),\quad  r\ge 0.
\end{equation}   
Note that $\omega\otimes_t\bar\omega$ again belongs to $\Omega$.

Consider a society in which the labor supply is equal to total population. The capital stock $K$ of the society accumulates from economic output, generated by the capital itself and the labor force. At the same time, $K$ may decrease due to capital depreciation and consumption from the population. Specifically, we assume that $K$ follows the dynamics
\[
d K_t = [F(K_t, Y_t) - \lambda K_t  - c_t K_t] dt\quad \hbox{for}\ t> 0,\qquad K_0 = k>0. 
\]
Here, $F:[0,\infty)\times[0,\infty)\to [0,\infty)$ is a production function, $Y$ is the labor supply process,  $\lambda\ge 0$ is the constant rate of depreciation, and $c$ is the consumption rate process chosen by the population. Throughout this paper, we take $F$ to be the Cobb-Douglas form, i.e. 
\begin{equation}\label{CD}
F(k,y) := k^\alpha y^{1-\alpha},\quad \hbox{for some $\alpha\in(0,1)$}. 
\end{equation}
Also, we assume that the labor supply process $Y$ is stochastic, modeled as a geometric Brownian motion:
\[
dY_t = n Y_t dt + \sigma Y_t dW_t\quad \hbox{for}\ t>0,\qquad  Y_0 = y>0.
\] 
where $n\in\R$ and $\sigma>0$ are two given constants. In addition, we consider general consumption processes $c$ without any a priori boundedness condition, as opposed to most previous studies in the literature. Specifically, the set $\C$ of admissible consumption processes is taken as 
\begin{equation}\label{C}
\C := \left\{c:\Omega\times [0,\infty)\to \R_+: c\ \hbox{is progressively measurable},\ \int_0^t c_s ds <\infty\ \ \forall t>0\ \ \hbox{a.s.}\right\}.
\end{equation}

At each time $t\ge 0$, every individual is allotted the capital $K_t/Y_t$, which can be consumed immediately or saved for future production. An individual is then faced with an optimal consumption problem: he/she intends to choose an appropriate consumption process $\hat c\in\C$, so that the expected discounted utility from consumption can be maximized. Specifically, the corresponding value function is given by
\begin{equation}\label{v}
v(k,y) := \sup_{c\in\C}\E\bigg[\int_{0}^{\infty} e^{-\beta t} U\left(c_t \frac{K^k_t}{Y^y_t}\right) dt\bigg],
\end{equation}
where $\beta\ge 0$ is the discount rate and $U:[0,\infty)\to\R$ is a utility function. We will assume that 
\begin{align}
&\hbox{$U$ is strictly increasing and strictly concave}, \label{U1}\\
&U'(0+) = U(\infty) = \infty\quad \hbox{and}\quad U'(\infty) = U(0) = 0. \label{U2}
\end{align} 


The dimension of the problem can be reduced, by introducing the variable $x := k/y$ and the process $X_t:=K_t/Y_t$, i.e. the {\it capital per capita} process. Specifically, the value function in \eqref{v} can be re-written as
\begin{equation}\label{V}
V(x):=\sup_{c\in\C}\E\bigg[\int_{0}^{\infty} e^{-\beta t} U\left(c_t X^x_t \right) dt\bigg],
\end{equation}
where the process $X$ satisfies, thanks to It\^{o}'s formula, 
\begin{align}\label{X}
dX_t = (X_t^\alpha  - \mu X_t - c_tX_t)dt - \sigma X_tdW_t  \quad t>0,\qquad  X_0 = x\ge 0,  
\end{align}
with $\mu := \lambda + n - \sigma^2$. As in \cite{MZ08}, we will assume throughout the paper that
\begin{equation}\label{mu>0}
\mu >0.
\end{equation}

The goal of this paper is to provide characterizations for the value function $V$ in \eqref{V}, as well as the associated optimal consumption process $\hat c$. 
 


\section{The Capital per Capita Process}\label{sec:X}
In this section, we analyze the capital per capita process $X$, formulated as the stochastic differential equation (SDE) \eqref{X}. We will investigate the existence and uniqueness of solutions to \eqref{X}, and derive several moment estimates for $X$, useful in Sections~\ref{sec:V_L} and \ref{sec:optimal consumption} for characterizing $V$ in \eqref{V}. 

The SDE \eqref{X} is non-standard: the drift coefficient is {\it neither} Lipschitz {\it nor} of linear growth. Indeed, Lipschitz continuity fails due to the term $X^\alpha_t$, and the unboundedness of $c$ may lead to superlinear growth. Consequently, standard techniques to establish existence and uniqueness of solutions (requiring both ``Lipschitz'' and ``linear growth'') and to derive moment estimates (requiring ``linear growth'') cannot be applied here.  

\begin{remark}\label{rem:linear growth in MZ08}
In \cite{MZ08}, \eqref{X} is studied in a simpler setting, where $c$ is assumed to be uniformly bounded (in fact, $c_t \le 1$ for all $t\ge 0$). This ensures linear growth of the drift coefficient of \eqref{X}, such that some standard techniques and estimates can still be used. 
\end{remark}


Without the aid of standard results, we investigate existence and uniqueness of solutions to \eqref{X}, by constructing solutions directly.
As shown in Proposition~\ref{prop:existence} and Corollary~\ref{coro:X=0} below, existence can be established in general, yet uniqueness need not always hold. 



\begin{proposition}\label{prop:existence}
For any $c\in\C$ and $x>0$, there exists a unique strong solution to \eqref{X}, which is strictly positive a.s.
\end{proposition}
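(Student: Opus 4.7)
The plan is to reduce the non-standard SDE \eqref{X} to a pathwise ordinary differential equation, by absorbing the linear-in-$X$ coefficients into a Dol\'eans--Dade type exponential factor. Concretely, I would introduce
\[
A_t := \exp\Bigl(-\sigma W_t - \tfrac{\sigma^2}{2}t - \mu t - \int_0^t c_s\,ds\Bigr),\qquad t\ge 0,
\]
which is continuous, adapted, and strictly positive, thanks to $\int_0^t c_s\,ds<\infty$ a.s.\ for $c\in\C$. A direct It\^o calculation gives $dA_t = -A_t\bigl((\mu+c_t)\,dt + \sigma\,dW_t\bigr)$. Writing $X_t = A_t Y_t$ for an absolutely continuous process $Y$ and applying the product rule, the SDE \eqref{X} is seen to be equivalent to the pathwise ODE
\[
\dot Y_t = A_t^{\alpha-1}Y_t^\alpha,\qquad Y_0 = x > 0.
\]

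For existence, I would solve this separable ODE in closed form and set
\[
X_t := A_t\Bigl(x^{1-\alpha} + (1-\alpha)\int_0^t A_s^{\alpha-1}\,ds\Bigr)^{1/(1-\alpha)}.
\]
Since $A$ is continuous and strictly positive, $s\mapsto A_s^{\alpha-1}$ is bounded on every compact $[0,T]$, so the integral inside is finite for all $t\ge 0$ a.s.; hence $X$ is a well-defined, continuous, adapted, strictly positive, non-exploding process that satisfies \eqref{X} by construction, i.e.\ by reversing the product-rule identity used to derive the ODE. For uniqueness, I would take any other strong solution $\tilde X$ and define $\tilde Y_t := \tilde X_t/A_t$; the same product-rule computation shows that $\tilde Y$ solves the very same pathwise ODE with $\tilde Y_0 = x>0$. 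Because $\dot{\tilde Y}\ge 0$, we have $\tilde Y_t\ge x>0$ for all $t$, and on $[x,\infty)$ the map $y\mapsto A_t^{\alpha-1}y^\alpha$ is $C^1$ and hence locally Lipschitz. Classical ODE uniqueness then forces $\tilde Y\equiv Y$ pathwise, so $\tilde X\equiv X$; strict positivity of $X$ is inherited from $A,Y>0$.

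The main obstacle, as noted in \remref{rem:linear growth in MZ08}, is that neither Lipschitz nor linear-growth conditions hold for the drift of \eqref{X}, so the standard existence/uniqueness theorems and Yamada--Watanabe-type arguments do not directly apply. What makes the approach work is the multiplicative structure of the bad terms: the non-Lipschitz piece $X^\alpha$ is sublinear and the potentially superlinear piece $-c_tX$ is itself linear in $X$; together with the also-linear terms $-\mu X$ and $-\sigma X\,dW$, all problematic contributions can be swept into the exponential factor $A_t$, leaving behind an autonomous, locally Lipschitz ODE whose solution, starting from $x>0$, is monotone and cannot reach $0$ from above. Once existence and uniqueness are reduced to this ODE, strict positivity and non-explosion come for free.
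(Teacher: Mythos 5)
Your proof is correct, and it is in essence the same construction as the paper's, carried out in the opposite order: the paper first applies the power transform $Z=X^{1-\alpha}$ (turning \eqref{X} into a linear SDE) and then uses the integrating factor $G$, whereas you first strip off the multiplicative noise and the linear drift by dividing by $A$ and then solve the resulting separable ODE. The two are algebraically identical — one checks that $G_t=A_t^{\alpha-1}$, so your closed-form expression for $X$ coincides exactly with the one obtained from \eqref{Z formula}. Your ordering does buy two small technical simplifications worth noting: since $(x,a)\mapsto x/a$ is smooth on $\R\times(0,\infty)$, It\^{o}'s formula applies to $\tilde X_t/A_t$ without first localizing at the hitting time of zero (the paper must stop at $\tau$ because $x\mapsto x^{1-\alpha}$ is not differentiable at the origin, and then argue a posteriori that $\tau=\infty$); and your uniqueness step is self-contained, reducing to Carath\'{e}odory uniqueness for a pathwise ODE whose right-hand side is locally Lipschitz on $[x,\infty)$ with a locally bounded time-dependent coefficient, using monotonicity of $Y$ to keep the solution away from the non-Lipschitz point $y=0$.
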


\begin{proof}
Fix $c\in\C$ and $x>0$. Consider $ Z_t := X_t^{1 - \alpha}$, with $Z_0 = z := x^{1-\alpha}>0$. Since the function $f(y) := y^{1-\alpha}$ is well-defined on $[0,\infty)$ and differentiable on $(0,\infty)$, we can apply It\^{o}'s formula to $Z$ only up to the stopping time 
\[
\tau := \inf\{t\ge 0 : X^x_t =0\} = \inf\{t\ge 0 : Z^z_t =0\}.
\]
This gives the dynamics of $Z$ up to time $\tau$: 
\begin{equation}\label{Z}
dZ_t =  (1 - \alpha )  \left(1 -  (\mu  + c_t + \frac{1}{2}\sigma^2  \alpha  ) Z_t \right) dt  - \sigma( 1 - \alpha) Z_t  dW_t,\quad \hbox{for}\ 0<t<\tau.
\end{equation}
We claim that this SDE admits a unique strong solution. For simplicity, let $a := 1 - \alpha$ and $b_t := - (1 - \alpha) (\mu  + c_t + \frac{1}{2}\sigma^2  \alpha  )$, and define 
\begin{equation}\label{G}
G_t : = \exp\left(\int_{0}^{t}(-b_s+\frac{\sigma^2 a^2}{2})ds +\sigma a W_t\right) >0,\qquad \hbox{for}\ t\ge 0.
\end{equation}
Note that $G$ is well-defined a.s. thanks to $c\in\C$; recall \eqref{C}. By definition, $G$ satisfies the dynamics $dG_t = (-b_t+\sigma^2 a^2) G_t dt +\sigma a G_t dW_t$, for all $t>0$. By applying It\^{o}'s formula to the product process $G Z$ up to time $\tau$, we get 
\begin{align*}
d(G_t Z_t) &= G_t(a+b_tZ_t)dt -\sigma a G_t Z_t dW_t +G_t Z_t(-b_t+{\sigma^2 a^2}) dt +\sigma a G_t Z_tdW_t  -\sigma^2 a^2 G_t Z_t dt \\ 
&= aG_t dt,\quad \hbox{for}\ 0<t<\tau.
\end{align*}
This implies that
\begin{equation}\label{Z formula}
Z_t = \frac{1}{G_t} \bigg(z + (1-\alpha)\int_{0}^{t} G_s ds\bigg)
\end{equation}
is the unique strong solution to \eqref{Z}, given that $Z_0 =z$. Now, in view of \eqref{Z formula} and $G_t>0$ for all $t\ge 0$, we conclude that $Z_t >0$ for all $t\ge 0$ a.s., and thus $\tau=\infty$ a.s. 

With $\tau=\infty$ a.s., the construction in the proof above implies that the process $X_t = Z_t^{1/(1-\alpha)}$, $t\ge 0$, with $Z$ given by \eqref{Z formula}, is the unique strong solution to \eqref{X}, and it is strictly positive a.s. 
\end{proof}


For the case $x=0$ in \eqref{X}, uniqueness of solutions fails. 

\begin{corollary}\label{coro:X=0}
For any $c\in \C$, if $x =0$ in \eqref{X}, then $X\equiv 0$ and
\[
\widetilde{X}_t := 
\begin{cases}
0 \quad &\hbox{if}\ t=0,\\
\left(\frac{1-\alpha}{G_t}\int_0^t G_s ds\right)^{\frac{1}{1-\alpha}}>0\quad &\hbox{if}\ t>0,
\end{cases}
\]
are two dinstinct strong solutions to \eqref{X}. Here, $G$ is defined as in \eqref{G}.
\end{corollary}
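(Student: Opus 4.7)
The plan is to verify the two candidate processes separately. The first process $X \equiv 0$ is immediate: substituting $X_t \equiv 0$ makes both the drift $X_t^\alpha - \mu X_t - c_t X_t$ and the diffusion $-\sigma X_t$ vanish identically, and $X_0 = 0$ holds by definition. So the real work is verifying that $\widetilde{X}$, which is strictly positive on $(0,\infty)$, is also a strong solution to \eqref{X}.

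My strategy for $\widetilde{X}$ is to reuse the construction in the proof of Proposition~\ref{prop:existence} with the degenerate initial value $z=0$. Define $\widetilde{Z}_t := \frac{1-\alpha}{G_t}\int_0^t G_s ds$, which is precisely formula \eqref{Z formula} evaluated at $z = 0$. Since $G > 0$ a.s., I would observe that $\widetilde{Z}_t > 0$ for all $t > 0$, and that the product-rule computation already performed for $G_t Z_t$ in the proof of Proposition~\ref{prop:existence} applies verbatim to $G_t \widetilde{Z}_t = (1-\alpha)\int_0^t G_s ds$, showing that $\widetilde{Z}$ satisfies \eqref{Z} on $(0,\infty)$. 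Setting $\widetilde{X}_t := \widetilde{Z}_t^{1/(1-\alpha)}$ for $t > 0$ and applying It\^{o}'s formula to $f(z) = z^{1/(1-\alpha)}$ on any interval $[\eps, T]$ with $\eps > 0$ (on which $\widetilde{Z}$ stays bounded away from $0$) would then recover the dynamics \eqref{X} for $\widetilde{X}$ on $(\eps, T]$.

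The main technical step, and the principal obstacle I anticipate, is passing to the limit $\eps \to 0$ so that \eqref{X} holds in integrated form starting from $t=0$. For continuity of $\widetilde{X}$ at $0$, I would use $G_t \to 1$ as $t \to 0+$ to get $\widetilde{Z}_t \sim (1-\alpha) t$, hence $\widetilde{X}_t \sim ((1-\alpha)t)^{1/(1-\alpha)} \to 0 = \widetilde{X}_0$. Pathwise continuity of $\widetilde{X}$ on $[0,T]$ then yields local boundedness, making the Lebesgue integrals of $\widetilde{X}_s^\alpha$, $\mu \widetilde{X}_s$, and $c_s \widetilde{X}_s$ (using the integrability of $c$ built into $\C$ in \eqref{C}) a.s.\ finite, and rendering $\int_0^T \sigma \widetilde{X}_s dW_s$ a well-defined It\^{o} integral. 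Sending $\eps \to 0$ in the integrated SDE then delivers \eqref{X} on $[0,T]$ with initial value $0$. Distinctness is then immediate, since $\widetilde{X}_t > 0$ for every $t > 0$ whereas $X \equiv 0$.
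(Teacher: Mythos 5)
Your proposal is correct and follows essentially the same route as the paper: both verify that the explicit formula $\widetilde{Z}_t=\frac{1-\alpha}{G_t}\int_0^t G_s\,ds$ (i.e.\ \eqref{Z formula} with $z=0$) solves \eqref{Z}, is strictly positive for $t>0$, and then transfer back to $\widetilde{X}=\widetilde{Z}^{1/(1-\alpha)}$ via It\^{o}'s formula away from $t=0$, with continuity at $0$ coming from $G_0=1$. If anything, your treatment of the passage $\eps\to 0$ in the integrated SDE is spelled out more carefully than in the paper's proof.
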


\begin{proof}
Since $X\equiv 0$ trivially solves \eqref{X}, we focus on showing that $\widetilde{X}$ is a strong solution to \eqref{X}. First, since $G_0=1\neq 0$, $\widetilde X$ is continuous at $t=0$, i.e. $\lim_{t\downarrow 0} \widetilde{X}_t =  0 = \widetilde{X}_0$. Now, consider the SDE \eqref{Z}, with $Z_0 =0$. Due to the term $(1-\alpha) dt$, $Z$ will immediately go up from $0$, such that $\tau' := \inf\{t>0 : Z^0_t = 0\} >0$. We can then apply It\^{o}'s formula to the process $G Z$ over the interval $(0,\tau')$. Similarly to the proof of Proposition~\ref{prop:existence}, we find that $Z_t = \frac{1-\alpha}{G_t} \int_{0}^{t} G_s ds$ is the unique strong solution to \eqref{Z} up to time $\tau'$, given that $Z_0=0$. But the formula of $Z$ entails $Z_t >0$ for all $t>0$ a.s., and thus $\tau' =\infty$ a.s. Observe that $\widetilde{X}_t = (Z_t)^{1/(1-\alpha)}$ for all $t\ge 0$. With $\tau' =\infty$ a.s., we can apply It\^{o}'s formula to $\widetilde X$ over $(0,\infty)$, which shows that it is a strong solution to \eqref{X}.
\end{proof}

\begin{remark}\label{rem:V(0)=0}
Recall $V$ in \eqref{V}. By Corollary~\ref{coro:X=0},  $V(0)$ is not well-defined. Indeed, one has $V(0) = 0$ with $X\equiv 0$ in \eqref{V}, but $V(0)>0$ with $X= \widetilde X$ in \eqref{V}. 
\end{remark}

\begin{remark}
According to the boundary classification in Karlin and Taylor \cite[Chapter 15]{KT-book-81}, $x=0$ is an ``entrance boundary'' of the state space $[0,\infty)$ of $\widetilde X$ in Corolloary~\ref{coro:X=0}: beginning at the boundary $x=0$, $\widetilde X$ quickly moves to the interior and never returns to the boundary.
\end{remark}


Classical moment estimates of SDEs rely on linear growth of coefficients, along with an application of Gronwall's lemma; see e.g. Krylov \cite[Chapter 2]{Krylov-book-80}, especially Corollary 2.5.12. As mentioned before, the drift coefficient of \eqref{X} does not necessarily have linear growth, unless $c$ is known {\it a priori} a bounded process (as in \cite{MZ08}). The explicit formula of $X$ via \eqref{Z formula} turns out to be handy here. Detailed analysis on such a formula yields desirable moment estimates, without requiring any linear growth condition.


\begin{proposition}\label{prop:estimates X}
Let $\eta := \frac{1}{1-\alpha}$. Given $c\in\C$, the unique strong solution $X$ of \eqref{X} satisfies 
 \begin{align} 
 \E[X^x_t]&\leq 2^{\eta -1} ( x + t^{\eta} )\quad\hbox{and}\quad \E[(X^x_t)^2] \le 2^{2\eta-1} e^{\sigma^2 t} \left(x^2 + \frac{t^{2\eta-1}}{\sigma^2}\right),\qquad \forall x>0\ \hbox{and}\ t\ge 0.   \label{eq2.5} 
 \end{align}
Moreover, for any $\eps>0$, there exists $C_\eps >0$ such that 
\begin{equation}
\E[|X^x_t - X^y_t | ] \leq C_{\eps} |x-y| + \eps ( x + y + t^{\eta}) \qquad \forall x,y >0. \label{eq2.7}
\end{equation}
\end{proposition}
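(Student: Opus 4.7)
The plan is to exploit the explicit representation of $X_t^x$ derived in the proof of Proposition~\ref{prop:existence}: with $A_t := 1/G_t$ and $B_t := (1-\alpha)\int_0^t G_s\, ds / G_t$, one has $X_t^x = (x^{1-\alpha} A_t + B_t)^\eta$. All three bounds will then follow from a small number of moment estimates for $A_t$ and $B_t$, combined with elementary inequalities. First I would derive the underlying moment estimates for $G$. Direct computation from \eqref{G}, using the key identity $\eta(1-\alpha) = 1$, gives
\[
G_t^\eta = \exp\left((\mu + \sigma^2/2)\,t + \int_0^t c_s\, ds + \sigma W_t\right).
\]
Since $c_s \ge 0$ and $\E[e^{-\sigma W_t}] = e^{\sigma^2 t/2}$, this yields $\E[A_t^\eta] \le e^{-\mu t} \le 1$, and analogously $\E[A_t^{2\eta}] \le e^{\sigma^2 t - 2\mu t} \le e^{\sigma^2 t}$. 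The same computation applied to increments yields $\E[G_s^\eta/G_t^\eta] \le e^{-\mu(t-s)} \le 1$ and $\E[G_s^{2\eta}/G_t^{2\eta}] \le e^{\sigma^2(t-s)}$ for $0 \le s \le t$.

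For the first two bounds in \eqref{eq2.5}, I would apply $(a+b)^p \le 2^{p-1}(a^p + b^p)$ with $p = \eta$ and $p = 2\eta$ to get $X_t^x \le 2^{p-1}(x^{p(1-\alpha)} A_t^p + B_t^p)$, noting that $p(1-\alpha) \in \{1,2\}$. For $\E[B_t^p]$, Jensen's inequality gives $(\int_0^t G_s\, ds)^p \le t^{p-1}\int_0^t G_s^p\, ds$, reducing the problem to the estimates above. Integrating $e^{-\mu(t-s)} \le 1$ in $s$ yields $\E[B_t^\eta] \le t^\eta$; integrating $e^{\sigma^2(t-s)}$ in $s$ produces $(e^{\sigma^2 t}-1)/\sigma^2 \le e^{\sigma^2 t}/\sigma^2$, which delivers the $t^{2\eta-1}/\sigma^2$ factor.

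The third estimate \eqref{eq2.7} is the main obstacle. Take $x > y > 0$ without loss of generality. The mean value theorem applied to $u \mapsto u^\eta$, together with the linearity of $Z_t^x$ in $x^{1-\alpha}$, gives the pathwise bound
\[
X_t^x - X_t^y \le \eta(Z_t^x)^{\eta-1}(Z_t^x - Z_t^y) = \eta A_t(Z_t^x)^{\eta-1}(x^{1-\alpha}-y^{1-\alpha}).
\]
Applying $(a+b)^{\eta-1} \le C_\eta(a^{\eta-1}+b^{\eta-1})$ and the identity $(1-\alpha)(\eta-1) = \alpha$, I get $A_t(Z_t^x)^{\eta-1} \le C_\eta(x^\alpha A_t^\eta + A_t B_t^{\eta-1})$. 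The crucial mixed moment is controlled by H\"older's inequality with conjugate exponents $\eta$ and $\eta/(\eta-1)$ (using $(\eta-1)/\eta = \alpha$):
\[
\E[A_t B_t^{\eta-1}] \le (\E[A_t^\eta])^{1/\eta}(\E[B_t^\eta])^{(\eta-1)/\eta} \le t^{\eta-1}.
\]
Hence $\E[|X_t^x - X_t^y|] \le C_\eta'(x^{1-\alpha}-y^{1-\alpha})(x^\alpha + t^{\eta-1})$. To finish, I would use the weighted AM--GM inequality $x^\alpha y^{1-\alpha} \ge y$ to obtain $x^\alpha(x^{1-\alpha}-y^{1-\alpha}) \le x-y$; the concavity bound $x^{1-\alpha}-y^{1-\alpha} \le (x-y)^{1-\alpha}$; and finally Young's inequality with exponents $\eta$ and $\eta/(\eta-1)$ to split $(x-y)^{1-\alpha} t^{\eta-1} \le C_\epsilon(x-y) + \epsilon\, t^\eta$ for arbitrary $\epsilon > 0$. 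Combining these delivers the claimed bound, in fact a slightly stronger form without the $x+y$ contribution.

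The main technical difficulty is keeping track of the various exponents and matching them in the H\"older and Young steps; the three algebraic identities $\eta(1-\alpha)=1$, $(1-\alpha)(\eta-1)=\alpha$, and $(\eta-1)/\eta = \alpha$ must be used consistently so that everything collapses into the form stated in \eqref{eq2.5}--\eqref{eq2.7}.
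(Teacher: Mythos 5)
Your proposal is correct. For the two moment bounds in \eqref{eq2.5} it follows the paper's proof essentially verbatim: the representation $X^x_t=(x^{1-\alpha}A_t+B_t)^\eta$ is exactly \eqref{Z formula}, the bounds $\E[A_t^\eta]\le e^{-\mu t}$, $\E[A_t^{2\eta}]\le e^{\sigma^2 t}$ and the Jensen step on $(\int_0^t G_s\,ds)^p$ are the same computations, and the constants come out identically.

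For the Lipschitz-type estimate \eqref{eq2.7} you take a genuinely different route. The paper first shows $\E[|Z_t-\bar Z_t|^\eta]\le|x-y|$ and then converts $|Z_t^\eta-\bar Z_t^\eta|$ via the algebraic inequality \eqref{eq2.12} (Young with exponents $\eta$ and $\eta/(\eta-1)$ applied to $|a-b|$ and $a^{\eta-1}+b^{\eta-1}$), which produces the error term $\eps(\E[Z_t^\eta]+\E[\bar Z_t^\eta])\le \eps\,2^{\eta-1}(x+y+t^\eta)$. You instead apply the mean value theorem pathwise, exploit the linearity of $Z^x_t$ in $x^{1-\alpha}$ so that the difference $Z^x_t-Z^y_t=(x^{1-\alpha}-y^{1-\alpha})A_t$ is deterministic times $A_t$, and control the cross term $\E[A_tB_t^{\eta-1}]$ by H\"older; the Young step then acts only on the deterministic quantity $(x-y)^{1-\alpha}t^{\eta-1}$. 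All the exponent bookkeeping checks out ($\eta(1-\alpha)=1$, $(1-\alpha)(\eta-1)=\alpha$, $(\eta-1)/\eta=\alpha$), and the resulting bound $C_\eps|x-y|+\eps\,t^\eta$ is indeed slightly sharper than \eqref{eq2.7}, dropping the $x+y$ contribution; it of course implies the stated form. Two cosmetic remarks: the inequality $x^\alpha y^{1-\alpha}\ge y$ for $x\ge y$ is just monotonicity of $u\mapsto u^\alpha$, not AM--GM (which gives the reverse comparison against $\alpha x+(1-\alpha)y$); and in the step $(a+b)^{\eta-1}\le C_\eta(a^{\eta-1}+b^{\eta-1})$ you should note that the constant is $1$ when $\eta-1\le 1$ and $2^{\eta-2}$ otherwise, since $\eta-1=\alpha/(1-\alpha)$ can fall on either side of $1$. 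Neither affects correctness.
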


\begin{proof}
Fix $c\in\C$ and $x>0$. Consider $ Z_t := (X^x_t)  ^  {1 - \alpha} $. Then, as shown in the proof of Proposition~\ref{prop:existence}, $Z$ satisfies \eqref{Z}, which can be solved to get the formula \eqref{Z formula}. It follows that
\begin{equation}\label{X in Z}
X_t = Z^\eta_t = G_t^{-\eta} \bigg(x^{1-\alpha} + (1-\alpha)\int_{0}^{t} G_s ds\bigg)^\eta\le  2^{\eta-1} G_t^{-\eta} \bigg[x + (1-\alpha)^{\eta}\left(\int_{0}^{t} G_s ds\right)^{\eta}\bigg],
\end{equation}
where the inequality follows from $(u+v)^{k} \leq 2^{k-1}(u^{k}+v^{k})$ for $u,v\ge 0$ and $k> 1$. Observe from \eqref{G} that
\begin{equation}\label{G formula}
G_t = \exp\left(\int_{0}^{t}(1-\alpha)(\mu+ c_t+\frac{\sigma^2}{2})ds + (1-\alpha)\sigma W_t\right) >0,\qquad t\ge 0.
\end{equation}
This, together with $c_t \ge 0$, implies that 
\begin{equation}\label{E[G_t]}
\E[G_t^{-\eta}] \le \E\bigg[\exp\left((-\mu-\frac{\sigma^2}{2})t - \sigma W_t\right) \bigg] = e^{-\mu t}<1. 
\end{equation}
Now, for any $0\le s\le t$, we introduce 
\begin{equation}\label{G_s,t}
G_{s,t} := \exp\left(\int_{s}^{t}(1-\alpha)(\mu+ c_r+\frac{\sigma^2}{2})dr + (1-\alpha)\sigma (W_t-W_s)\right) >0.
\end{equation}
Then, observe that
\begin{align*}
\E\bigg[G_t^{-\eta}\left(\int_0^t G_s ds\right)^\eta\bigg] = \E\bigg[\left(\int_0^t G_{s,t}^{-1} ds\right)^\eta\bigg].
\end{align*}
By applying Jensen's inequality 
to $\big(\int_0^t G_{s,t}^{-1} ds\big)^\eta$, 
we deduce from the above equality that
\begin{equation}\label{<t^eta}
\E\bigg[G_t^{-\eta}\left(\int_0^t G_s ds\right)^\eta\bigg] \le \E\bigg[t^{\eta-1}\int_0^t G_{s,t}^{-\eta} ds \bigg] = t^{\eta-1}\int_0^t \E[G^{-\eta}_{s,t}] ds \le t^\eta,
\end{equation}
where the last inequality follows from $\E[G^{-\eta}_{s,t}]\le 1$, which can be proved as in \eqref{E[G_t]}. Now, by \eqref{E[G_t]} and \eqref{<t^eta}, we conclude from \eqref{X in Z} that $\E[X_t] \le 2^{\eta-1}(x+t^\eta)$, as desired. 
To prove the second part of \eqref{eq2.5}, we replace $\eta$ by $2\eta$ in the above arguments. First, \eqref{E[G_t]} becomes 
\begin{equation}\label{E[G_t]'}
\E[G_t^{-2 \eta}] \le \E\bigg[\exp\left((-2\mu-{\sigma^2})t - 2\sigma W_t\right) \bigg] = e^{-(2\mu-\sigma^2) t}\le e^{\sigma^2 t}. 
\end{equation}
Then, \eqref{<t^eta} becomes 
\begin{align}\label{<t^eta'}
\E\bigg[G_t^{-2\eta}\left(\int_0^t G_s ds\right)^{2\eta}\bigg] &= \E\bigg[\left(\int_0^t G_{s,t}^{-1} ds\right)^{2\eta}\bigg]\notag\\
&\le \E\bigg[t^{2\eta-1}\int_0^t G_{s,t}^{-2\eta} ds \bigg] = t^{2\eta-1}\int_0^t \E[G^{-2\eta}_{s,t}] ds \le \frac{t^{2\eta-1}}{\sigma^2}(e^{\sigma^2 t} -1),
\end{align}
where the first inequality follows from applying Jensen's inequality to $\big(\int_0^t G_{s,t}^{-1} ds\big)^{2\eta}$ 
 and the second inequality is due to $\E[G^{-2\eta}_{s,t}]\le e^{\sigma^2 (t-s)}$, which can be proved as in \eqref{E[G_t]'}. Finally, using the same calculation in \eqref{X in Z} with $\eta$ replaced by $2\eta$, along with \eqref{E[G_t]'} and \eqref{<t^eta'}, we conclude that $\E[(X^x_t)^2] \le 2^{2\eta-1} e^{\sigma^2 t} (x^2 + t^{2\eta-1}/\sigma^2)$, as desired.

To prove \eqref{eq2.7},  
consider the process $Z$ defined above, as well as $ \bar Z_t := (X^y_t) ^ {1-\alpha}$. As above, $Z$ and $\bar Z$ take the form \eqref{Z formula}, with initial values $z = x^{1-\alpha}$ and $\bar z = y^{1-\alpha}$, respectively. Thus, by \eqref{E[G_t]},  
\begin{align}\label{eq2.11}
\E[|Z_t - \bar Z_t |^\eta] \leq  |z - \bar z |^\eta\ \E[G_t^{-\eta}] \le |z - \bar z |^\eta = |x^ {1-\alpha} - y^ {1-\alpha} | ^{\frac{1} {1- \alpha}} \leq  |x-y|, 
\end{align}
where the last inequality follows from the observation $|u^ {r} - v^ {r} | \leq  |u-v|^r$ for any $u, v\ge 0$ and $0<r<1$. Indeed, we may assume without loss of generality that $u\ge v$ and define $\lambda:=u/v \ge 1$. Thus, the observation is equivalent to $\lambda^r-1 \le (\lambda-1)^r$ for any $\lambda \ge 1$ and $0<r<1$. The latter is true because $f(\lambda):=(\lambda-1)^r-\lambda^r+1$ satisfies $f(1)=0$ and $f'(\lambda) = r\big((\frac{1}{\lambda-1})^{1-r}-(\frac{1}{\lambda})^{1-r}\big)>0$ for all $\lambda> 1$. 

Next, for any $a, b \geq 0$ and $\eps>0$, observe that 
\begin{align} \label{eq2.12}
|a^{\eta} - b^{\eta}| & =\left|\int_a^b \eta r^{\eta-1} dr \right|\leq \eta |a-b| (a^{\eta-1} + b^{\eta-1}) \nonumber \\
&\leq   \frac {1}{ \eps^ {\eta } }  \ |a - b |^{\eta} + (\eta -1) \ \eps^ { \frac{ \eta  }{\eta - 1} } (a^{\eta-1} + b^{\eta-1})^{\frac{\eta }{\eta-1} } \nonumber 
\\& \leq \frac {1}{ \eps^ {\eta } }  \ |a - b |^{\eta} + (\eta -1) \ (2\eps)^ { \frac{ \eta  }{\eta - 1} } (a^{\eta} + b^{\eta}), 
\end{align}
where the second line follows from Young's inequality with $p = \eta $ and $q = \frac{\eta}{\eta -1 }$, and the third line is due to $(u+v)^{k} \leq 2^{k-1}(u^{k}+v^{k})$ for $u,v\ge 0$ and $k> 1$. 
Now, for any $\eps>0$, 
\begin{align*}
\E[|X^x_t - X^y_t | ] = \E[|Z_t ^\eta  - \bar Z_t ^ \eta| ]  &\leq  \frac{1}{\eps^{\eta } } |x-y| + (\eta-1) (2\eps)^{\frac{\eta}{\eta-1}} ( \E[ Z_t ^ {\eta} ]+ \E[\bar Z_t^{\eta} ]) \\
&\leq  \frac{1}{\eps^{\eta } }  |x-y| + 2^{\eta }(\eta-1) (2\eps)^{\frac{\eta}{\eta-1}} (  x  +  y + t^{\eta}  ),
\end{align*}
where the first inequality follows from \eqref{eq2.12} and \eqref{eq2.11}, and the second inequality is due to the first part of \eqref{eq2.5}. Now, in the last line of the previous inequality, by taking $\eps' := 2^{\eta }(\eta-1) (2\eps)^{\frac{\eta}{\eta-1}}$ and $C_{\eps'} := \frac{1}{\eps^{\eta } } = 2^{\eta^2}\big(\frac{\eta-1}{\eps'}\big)^{\eta-1}$, we see that \eqref{eq2.7} holds.
\end{proof}


\section{Properties of the Value Function}\label{sec:V_L}
In this section, we introduce, for each $L>0$, the auxiliary value function
\begin{equation}\label{V_L}
V_L(x) := \sup_{c\in\mathcal C_L}\E\left[\int _{0}^{\infty}  e^{-\beta t} U (c_t X^x_t) dt\right]\quad x\ge 0,
\end{equation}
where 
\begin{equation}\label{C_L}
\mathcal{C}_L := \{c\in\C : c_t\le L\ \hbox{for all}\ t\ge 0\}.
\end{equation}
We will first derive useful properties of $V_L$. 
As $L\to \infty$, we will see that $V_L$ converges desirably to $V$ in \eqref{V}, so that $V$ inherits many properties of $V_L$.     

Morimoto and Zhou \cite{MZ08} studied a similar problem to $V_L$: they took $L=1$ and the time horizon to be finite in \eqref{V_L}. Extending their arguments to infinite horizon gives properties of $V_L$ as below. 

\begin{proposition}\label{prop:properties V_L}
\begin{itemize}
\item [(i)] There exists $\varphi_0>0$ such that $V_L(x) \le x+ \varphi_0$ for all $x>0$ and $L>0$.
\item [(ii)] For any $L>0$, $V_L\in C^2((0,\infty))$ is a concave classical solution to 
\begin{equation}\label{HJB V_L}
\beta v(x)  =  \frac{1}{2} \sigma ^2 x^2v''(x) + (x^\alpha -\mu x)v'(x) + \tilde U_L (x, v'( x))\quad \hbox{for}\ x\in (0,\infty),     
\end{equation}
where $\tilde U_L:(0,\infty)^2\to (0,\infty)$ is defined by 
\begin{align*}
\tilde U_L(x, p) := \sup_{0\leq c\leq L} \{U (cx) - cxp\}.
\end{align*}
\end{itemize}
\end{proposition}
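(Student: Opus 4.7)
For part (i), the plan is to bound $V_L$ uniformly in $L$ by combining a linear tangent-line upper bound on $U$ with a martingale estimate derived from the state equation \eqref{X}. By concavity of $U$, for any $a>0$ one has $U(cx)\le U(a)-aU'(a)+U'(a)\,cx$ for all $c,x\ge 0$, so that
\begin{equation*}
V_L(x)\le \frac{U(a)-aU'(a)}{\beta} + U'(a)\sup_{c\in\C_L}\E\bigg[\int_0^\infty e^{-\beta t}c_t X^{x,c}_t\,dt\bigg].
\end{equation*}
Applying It\^o's formula to $e^{-\beta t}X_t$ on $[0,T]$, taking expectations (the resulting stochastic integral is a true martingale by the $L^2$ bound in Proposition \ref{prop:estimates X}), rearranging, and letting $T\to\infty$ yield
\begin{equation*}
\E\bigg[\int_0^\infty e^{-\beta t}c_t X_t\,dt\bigg] \le x + \E\bigg[\int_0^\infty e^{-\beta t}X_t^\alpha\,dt\bigg].
\end{equation*}
Jensen's inequality applied to $y\mapsto y^\alpha$, together with the first-moment bound of Proposition \ref{prop:estimates X} (which is itself uniform in $c$), delivers an estimate of the form $\E[\int_0^\infty e^{-\beta t}X_t^\alpha\,dt]\le C_1 x^\alpha + C_2$ independently of $c\in\C$. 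Using Young's inequality $x^\alpha\le \eps x + K_\eps$ for arbitrary $\eps>0$, and then choosing $a$ so that $U'(a)(1+\eps C_1)\le 1$ (which is possible since $U'(0+)=\infty$ and $U'(\infty)=0$), produces the desired bound $V_L(x)\le x+\varphi_0$ with $\varphi_0$ independent of both $L$ and $c$.

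For part (ii), I would transplant the viscosity-solution program of \cite{MZ08} from the finite-horizon setting to our infinite-horizon one, with the routine adaptations collected in Appendix \ref{sec:appendix}. The argument breaks into four steps. First, derive a dynamic programming principle for $V_L$ over $\C_L$ by a standard pasting argument, the truncation-and-limit step being justified by the linear bound of part (i) together with dominated convergence. Second, establish concavity and continuity of $V_L$ on $[0,\infty)$: concavity via the standard mixing argument exploiting concavity of $U$ and the fact that $c_t$ enters the drift of \eqref{X} linearly through $c_tX_t$; continuity from the estimate \eqref{eq2.7}. Third, use the DPP combined with Taylor expansion along smooth test functions to show $V_L$ is a viscosity solution of \eqref{HJB V_L}. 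Fourth, upgrade the viscosity solution to a classical $C^2((0,\infty))$ solution by interior regularity for uniformly elliptic semilinear equations: on every compact subinterval of $(0,\infty)$ the diffusion coefficient $\sigma^2 x^2$ is bounded below, and the nonlinearity $\tilde U_L(x,\cdot)$ is globally Lipschitz in $p$ (because the supremum in its definition ranges over the compact set $[0,L]$), so interior Schauder and Krylov--Safonov estimates deliver the required $C^2$ regularity.

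The main obstacle compared with \cite{MZ08} is the passage from finite to infinite horizon. Specifically, the truncation-in-$T$ limit, needed both inside the DPP and inside the derivation of the PDE, requires uniform tail control of $\int_T^\infty e^{-\beta t}U(c_t X_t)\,dt$; this is supplied precisely by the linear bound of part (i) in combination with the moment estimates of Proposition \ref{prop:estimates X} and the discount factor $e^{-\beta t}$. Once this tail control is in place, the local PDE analysis of \cite{MZ08} transfers essentially verbatim, localized on compact spatial subintervals of $(0,\infty)$.
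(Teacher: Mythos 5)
Your part (i) is correct and reaches the same linear bound by a mildly different route: you linearize $U$ along a tangent at a general point $a$ and then control $\E\big[\int_0^\infty e^{-\beta t}c_tX_t\,dt\big]$ via It\^o's formula applied to $e^{-\beta t}X_t$ together with the moment bound \eqref{eq2.5}, whereas the paper applies It\^o's formula directly to the Lyapunov function $\varphi(x)=x+\varphi_0$ and absorbs $\sup_{x\ge0}\{x^\alpha-\mu x\}$ and $\sup_{y\ge 0}\{U(y)-y\}$ into the choice of $\varphi_0$. Both arguments are valid and yield a $\varphi_0$ independent of $L$ and $x$.

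Part (ii) contains a genuine gap at the dynamic programming step. You assert that once the tail $\int_T^\infty e^{-\beta t}U(c_tX_t)\,dt$ is controlled, the local analysis of \cite{MZ08} transfers essentially verbatim. It does not: the finite-horizon argument of \cite{MZ08} uses an estimate for $\E[\sup_{0\le t\le T}X_t^2]$ to obtain the inequality $V_L(x)\ge \E\big[\int_0^\tau e^{-\beta t}U(c_tX^x_t)\,dt+e^{-\beta\tau}V_L(X^x_\tau)\big]$ \emph{simultaneously over all stopping times} $\tau$. Over an infinite horizon no analogue of $\E[\sup_{0\le t<\infty}X_t^2]$ is available, and the transplanted argument only yields this inequality at deterministic times $r\ge 0$. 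The stopping-time version is genuinely needed (for the viscosity supersolution property one stops at the exit time of a neighborhood of the test point), so an extra bootstrap is required: the paper shows that the deterministic-time inequality, combined with the concatenation/shifting identity, makes $t\mapsto \int_0^t e^{-\beta s}U(c_sX_s)\,ds+e^{-\beta t}V_L(X_t)$ a supermartingale and then applies optional sampling. Your proposal is missing this step, and "tail control" is not the obstruction it resolves. Secondarily, your $C^2$ upgrade is shakier than stated: Schauder estimates need the right-hand side $\beta v-(x^\alpha-\mu x)v'-\tilde U_L(x,v')$ to be H\"older continuous, which presupposes $C^{1,\alpha}$ regularity that a continuous viscosity solution does not come with, and Krylov--Safonov only gives H\"older continuity of $v$ itself. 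The paper instead solves the Dirichlet problem classically on each $[a,b]$ by an existence theorem for the semilinear equation and identifies that solution with $V_L$ through a viscosity comparison principle; some such comparison ingredient would be needed to make your regularity step rigorous as well.
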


The proof of Proposition~\ref{prop:properties V_L} is relegated to Appendix~\ref{sec:appendix}, where arguments in \cite{MZ08} are extended to infinite horizon. While this extension can mostly be done in a straightforward way, there are technicalities that require detailed, nontrivial analysis. This includes, particularly, the derivation of the dynamic programming principle for $V_L$; see  Lemma~\ref{lem:viscosity V_L} for details.  

Given that $\{V_L\}_{L>0}$ is by definition a nondecreasing sequence of functions, we define 
\begin{equation}\label{V_infty}
V_\infty(x) := \lim_{L \to \infty} V_L(x)\quad \hbox{for}\ x>0.  
\end{equation}

\begin{remark}\label{rem:properties V_infty}
$V_\infty$ immediately inherits many properties from $V_L$'s.
\begin{itemize}
\item [(i)] Thanks to Proposition~\ref{prop:properties V_L}, $V_\infty$ is concave, nondecreasing, and satisfies 
\begin{equation}\label{linear bdd}
0\le V_\infty(x)\le x+\varphi_0\quad \forall x> 0.
\end{equation}
\item [(ii)] The concavity of $V_\infty$ implies that it is continuous on $(0,\infty)$. Hence, by Dini's theorem, $V_L$ converges uniformly to $V_\infty$ on any compact subset of $(0,\infty)$.
\end{itemize}
\end{remark}

\begin{lemma}\label{lem:viscosity V_infty}
$V_\infty$ is a continuous viscosity solution to
\begin{equation}\label{HJB V_infty}
\beta v(x)  =  \frac{1}{2} \sigma ^2 x^2v''(x) + (x^\alpha -\mu x)v'(x) + \tilde U (v'( x))\quad \hbox{for}\ x\in (0,\infty),     
\end{equation}
where $\tilde U:(0,\infty)\to (0,\infty)$ is defined by 
\begin{align*}
\tilde U(p) := \sup_{y\ge 0} \{U (y) - y p\}.
\end{align*}
\end{lemma}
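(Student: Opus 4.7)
The plan is to invoke the standard stability of viscosity solutions under locally uniform convergence. By Proposition~\ref{prop:properties V_L}(ii), each $V_L$ is a classical (hence viscosity) solution to \eqref{HJB V_L}, and Remark~\ref{rem:properties V_infty}(ii) already gives continuity of $V_\infty$ on $(0,\infty)$ together with uniform convergence $V_L\to V_\infty$ on compact subsets of $(0,\infty)$. The only missing ingredient is therefore an appropriate convergence of the nonlinearity $\tilde U_L(x,p)$ to $\tilde U(p)$ as $L\to\infty$.

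First I would show that $\tilde U_L\to\tilde U$ is in fact an eventual equality on compact subsets of $(0,\infty)^2$. The substitution $y=cx$ yields $\tilde U_L(x,p)=\sup_{0\le y\le Lx}\{U(y)-yp\}$. For each $p>0$, strict concavity of $U$ together with the Inada conditions \eqref{U2} imply that $\tilde U(p)$ is attained at a unique $y^\ast(p)=(U')^{-1}(p)\in(0,\infty)$, and the map $p\mapsto y^\ast(p)$ is continuous and strictly decreasing. Thus for any compact $K\subset(0,\infty)^2$, setting $M:=\sup_{(x,p)\in K}y^\ast(p)<\infty$ and $L_0:=M/\inf_{(x,p)\in K}x$, one has $Lx\ge y^\ast(p)$ for every $(x,p)\in K$ and every $L\ge L_0$, so that $\tilde U_L(x,p)=\tilde U(p)$ identically on $K$ whenever $L\ge L_0$.

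The remaining touching argument is then routine. For the subsolution property, fix $\phi\in C^2((0,\infty))$ and $x_0>0$ such that $V_\infty-\phi$ attains a strict local maximum at $x_0$ with $V_\infty(x_0)=\phi(x_0)$; uniform convergence of $V_L$ to $V_\infty$ on a compact neighborhood of $x_0$, together with strictness, produces a sequence $x_L\to x_0$ at which $V_L-\phi$ attains its local maximum. Writing the classical subsolution inequality for $V_L$ at $x_L$ and letting $L\to\infty$ delivers the subsolution inequality for $V_\infty$ at $x_0$, using $\tilde U_L(x_L,\phi'(x_L))\to\tilde U(\phi'(x_0))$ from the previous paragraph and continuity of $\tilde U$, valid whenever $\phi'(x_0)>0$. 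The supersolution property is analogous: since $V_\infty$ is concave, any smooth test function $\phi$ touching it from below exists only at a point $x_0$ of differentiability of $V_\infty$, at which $\phi'(x_0)=V_\infty'(x_0)\ge 0$ by monotonicity (Remark~\ref{rem:properties V_infty}(i)).

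The main obstacle, in both halves, is the boundary case $\phi'(x_0)\le 0$, which lies outside the effective domain of $\tilde U$. I would handle it by extending $\tilde U(p):=+\infty$ for $p\le 0$, which is consistent with $\sup_{y\ge 0}\{U(y)-yp\}=+\infty$ in that regime. With this convention the subsolution inequality is vacuously satisfied whenever $\phi'(x_0)\le 0$; for the supersolution inequality, the only remaining awkward case is $\phi'(x_0)=0$ at a differentiability point of $V_\infty$, which by concavity and monotonicity would force $V_\infty$ to be constant on $[x_0,\infty)$. This is excluded by a direct comparison argument showing that $V_\infty$ is strictly increasing (higher initial wealth yields strictly higher expected utility under any admissible consumption, since by a pathwise comparison for \eqref{X} with the same $c$, a larger initial value produces a pointwise larger state process).
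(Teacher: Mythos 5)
Your proposal follows essentially the same route as the paper: locally uniform convergence of $V_L$ to $V_\infty$ (Remark~\ref{rem:properties V_infty}(ii)) plus locally uniform convergence of $\tilde U_L$ to $\tilde U$ on compacts of $(0,\infty)^2$ (via the unique maximizer $y^*(p)=(U')^{-1}(p)$), followed by stability of viscosity solutions; the paper simply cites the stability theorem of \cite[Theorem 4.5.1]{Morimoto-book-10} where you write out the touching argument. Your eventual-equality observation $\tilde U_L=\tilde U$ on compacts for $L\ge L_0$ is a clean sharpening, and your explicit treatment of the degenerate case $\phi'(x_0)\le 0$ addresses a point the paper leaves implicit.

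One step is looser than it should be: the parenthetical comparison argument for strict monotonicity of $V_\infty$. Pathwise domination $X^y_t>X^x_t$ for each fixed $c$ gives $J(y,c)\ge J(x,c)$ (with equality when $c\equiv 0$), and passing to the supremum over $c$ only yields $V_\infty(y)\ge V_\infty(x)$, not strict inequality. Fortunately, all you actually need is that $V_\infty$ is not constant on any half-line $[x_0,\infty)$, and this follows directly: with $c\equiv 1$, the formula \eqref{Z formula} gives $X^x_t\ge G_t^{-1/(1-\alpha)}x$, so $V_\infty(x)\ge \E\big[\int_0^\infty e^{-\beta t}U\big(G_t^{-1/(1-\alpha)}x\big)dt\big]\to\infty$ as $x\to\infty$ by monotone convergence and $U(\infty)=\infty$, contradicting eventual constancy. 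With that substitution your argument is complete.
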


\begin{proof}
By \eqref{U1} and \eqref{U2}, for any $p>0$, there exists a unique maximizer $y^*(p)>0$ such that $\tilde U(p) = U(y^*(p))-y^*(p)p$, and the map $p\mapsto y^*(p)$ is continuous. It follows that $\tilde U_L(x,p) = U(c^*(x,p) x)-c^*(x,p)xp$, where $c^*(x,p):= \min\{y^*(p)/x,L\}$. From these forms of $\tilde U$ and $\tilde U_L$, we see that $\tilde U_L$ converges uniformly to $\tilde U$ on any compact subset of $(0,\infty)^2$. This, together with Remark~\ref{rem:properties V_infty} (ii), implies that we can invoke the stability result of viscosity solutions (see e.g. \cite[Theorem 4.5.1]{Morimoto-book-10}). We then conclude from the stability and Proposition~\ref{prop:properties V_L} (ii) that $V_\infty$ is a viscosity solution to \eqref{HJB V_infty}.   
\end{proof}

In fact, the convergence of $V_L$ to $V_\infty$ is highly desirable. As the next result demonstrates, not only $V_L$ but also $V'_L$ and $V''_L$ converge uniformly. This readily implies smoothness of the limiting function $V_\infty$. 

\begin{proposition}\label{prop:classical V_infty}
$V'_{L}$ and $V_L''$ converge uniformly, up to a subsequence, on any compact subset of $(0,\infty)$. Hence, $V_\infty$ belongs to $C^2((0,\infty))$ with $V'_\infty(x) = \lim_{L\to\infty} V'_L(x)$ and $V''_\infty(x) = \lim_{L\to\infty} V''_L(x)$, up to a subsequence, for each $x>0$. Furthermore, $V_\infty$ is a classical solution to \eqref{HJB V_infty}.  
\end{proposition}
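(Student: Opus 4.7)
The plan is to obtain uniform $L^\infty$ bounds on both $V_L'$ and $V_L''$ over an arbitrary compact $[a,b]\subset(0,\infty)$, then apply Arzel\`a--Ascoli with a diagonal argument to extract a subsequence converging uniformly on every compact subset of $(0,\infty)$, and finally read the limit equation off \eqref{HJB V_L} to identify $V_\infty''$.

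For the uniform bound on $V_L'$: concavity together with $0\le V_L(x)\le x+\varphi_0$ (from Proposition~\ref{prop:properties V_L}(i) and $V_L\ge 0$) gives the tangent-line inequality $V_L(0+)\le V_L(x)-xV_L'(x)$, so $V_L'(x)\le V_L(x)/x\le 1+\varphi_0/a$ on $[a,b]$. For a lower bound, concavity yields $V_L'(x)\ge V_L'(b)\ge (V_L(2b)-V_L(b))/b$ for $x\in[a,b]$, so it suffices to show $V_\infty(2b)-V_\infty(b)>0$. This follows because $V_\infty\ge V_1$ is unbounded above as $x\to\infty$ (take $c\equiv 1$ and use the lower bound $X^x_t\ge G_t^{-\eta}x$ implicit in \eqref{Z formula} and \eqref{X in Z}), combined with the elementary fact that any concave, nondecreasing, unbounded function on $(0,\infty)$ is strictly increasing. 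By Remark~\ref{rem:properties V_infty}(ii), $V_L(2b)-V_L(b)\to V_\infty(2b)-V_\infty(b)>0$, yielding a uniform lower bound $V_L'(x)\ge\lambda>0$ on $[a,b]$ for $L$ sufficiently large.

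For $V_L''$, I would solve \eqref{HJB V_L} as
\begin{equation*}
V_L''(x) \;=\; \frac{2}{\sigma^2 x^2}\bigl[\beta V_L(x) - (x^\alpha-\mu x)V_L'(x) - \tilde U_L(x,V_L'(x))\bigr].
\end{equation*}
Setting $y^*(p):=(U')^{-1}(p)$, the interior maximizer of $c\mapsto U(cx)-cxp$ over $c\ge 0$ is $c=y^*(p)/x$, which lies in $[0,L]$ whenever $L\ge y^*(\lambda)/a$; consequently $\tilde U_L(x,V_L'(x))=\tilde U(V_L'(x))$ on $[a,b]$ for $L$ large, and continuity of $\tilde U$ on $[\lambda,1+\varphi_0/a]$ together with the previous step bounds the right-hand side uniformly in $L$. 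Hence $\{V_L'\}$ is uniformly bounded and equi-Lipschitz on $[a,b]$, and Arzel\`a--Ascoli plus a diagonal argument produces $L_k\to\infty$ with $V_{L_k}'\to g$ uniformly on every compact subset of $(0,\infty)$, for some continuous $g$. Passing to the limit in $V_{L_k}(x)=V_{L_k}(x_0)+\int_{x_0}^x V_{L_k}'(y)\,dy$ (using the uniform convergence $V_L\to V_\infty$ from Remark~\ref{rem:properties V_infty}(ii)) identifies $g=V_\infty'$, so $V_\infty\in C^1((0,\infty))$.

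Finally, uniform convergence of $V_{L_k}$ and $V_{L_k}'$ on compacts together with $\tilde U_L(x,V_L'(x))=\tilde U(V_L'(x))$ for $L$ large makes the right-hand side of the displayed formula for $V_L''$ converge uniformly on compacts to
\begin{equation*}
h(x) \;:=\; \frac{2}{\sigma^2 x^2}\bigl[\beta V_\infty(x) - (x^\alpha-\mu x)V_\infty'(x) - \tilde U(V_\infty'(x))\bigr].
\end{equation*}
Thus $V_{L_k}''\to h$ uniformly on compacts, and passing to the limit in $V_{L_k}'(x)=V_{L_k}'(x_0)+\int_{x_0}^x V_{L_k}''(y)\,dy$ gives $V_\infty''=h$ continuously, so $V_\infty\in C^2((0,\infty))$ and by construction satisfies \eqref{HJB V_infty} classically. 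The main obstacle is the uniform lower bound $V_L'\ge\lambda>0$: without it, $\tilde U_L(\cdot,V_L'(\cdot))$ can blow up since $\tilde U(p)\uparrow U(\infty)=\infty$ as $p\downarrow 0$, which is precisely why the strict monotonicity of $V_\infty$ (itself a consequence of its unboundedness) is indispensable.
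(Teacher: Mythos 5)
Your proof is correct and follows the same overall skeleton as the paper's: a uniform upper bound on $V_L'$ from concavity and the linear bound, a uniform positive lower bound on $V_L'$, a resulting uniform bound on $\tilde U_L(x,V_L'(x))$, a uniform bound on $V_L''$ read off from \eqref{HJB V_L}, Arzel\`a--Ascoli, and passage to the limit in the equation. The one genuine difference is how the key step --- the lower bound $V_L'\ge \lambda>0$ on $[a,b]$ for large $L$ --- is obtained. The paper argues by contradiction: if $V_{L_n}'(b)\downarrow 0$ along a subsequence, then $V_\infty$ is constant on $[b,\infty)$, and the viscosity solution property of $V_\infty$ (Lemma~\ref{lem:viscosity V_infty}) forces $\beta V_\infty(x)=\tilde U(0)=\infty$. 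You instead show directly that $V_\infty\ge V_1$ is unbounded (via $c\equiv 1$ and $X^x_t\ge G_t^{-\eta}x$, plus $U(\infty)=\infty$ and monotone convergence), hence strictly increasing by concavity, which gives $V_L'(b)\ge (V_L(2b)-V_L(b))/b\to (V_\infty(2b)-V_\infty(b))/b>0$. Your route is more elementary in that it does not invoke the viscosity property of $V_\infty$ at this stage, at the cost of a small additional value-function estimate; it also yields, as a byproduct, the exact identity $\tilde U_L(x,V_L'(x))=\tilde U(V_L'(x))$ for large $L$, which slightly streamlines the final limit passage compared with the paper's appeal to the locally uniform convergence $\tilde U_L\to\tilde U$. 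Both arguments are sound, and your observation that the positive lower bound on $V_L'$ is the crux (since $\tilde U(p)\to\infty$ as $p\downarrow 0$) matches exactly where the paper's proof concentrates its effort.
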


\begin{proof}    
Fix a compact subset $E$ of $(0,\infty)$. Let $a:= \inf E >0$ and $b:= \sup E$. 
For any $L>0$, since $V_{L}$ is nonnegative, nondecreasing, concave, and bounded above by $x + \varphi_0$ (Proposition~\ref{prop:properties V_L}), 
\[ 
0\le V_{L}'(x) \leq \frac{V_{L} (x) - V_{L}(0^+)}{x} \leq  \frac{x + \varphi_0}{x} = 1 + \frac{\varphi_0}{x}\le 1+\frac{\varphi_0}{a},\quad \forall x\in E.
\] 
Thus, $\{V_{L}'(x)\}_{L>0}$ is uniformly bounded on $E$. 

Next, we claim that $\big\{\tilde U_{L} (x, V_{L}'(x))\big\}_{L>0}$ is also uniformly bounded on $E$. To this end, we will show that there exists $C_E>0$ such that $V_{L}'(b) \geq C_E$ for all $L>0$. Assume to the contrary that there exits a subsequence $\{L_n\}_{n\in\N}$ such that $V_{L_n}'(b)\downarrow 0$. For any $x>b$, by the concavity of $V_{L_n}$, we have $V'_{L_n}(u) \leq V'_{L_n}(b)$ for $u\in[b,x]$, for all $n\in\N$. Taking integrals on both sides from $b$ to $x$ yields 
\[
V_{L_n}(x) -V_{L_n}(b) \leq V'_{L_n}(b)(x-b)\quad  \forall n\in\N. 
\]
As $n\to\infty$, we obtain $V_\infty(x) \leq V_\infty(b)$. Since $V_\infty$ is nondecreasing (Remark~\ref{rem:properties V_infty} (i)), we conclude that $V_\infty(x) = V_\infty(b)$ for all $x>b$, which in particular implies $V'_\infty(x) = V''_\infty(x) =0$ for all $x>b$. By the viscosity solution property of $V_\infty$ (Lemma~\ref{lem:viscosity V_infty}), for any $x>b$ we have $\beta V_\infty(x)=\tilde U (0)=\infty$, a contradiction. Now, with $V_{L}'(b) \geq C_E$ for all $L>0$, we have
\[
0 \leq \tilde U_{L}(x, V'_{L}(x) ) \leq \tilde U_{L}(x, V'_{L}(b)) \leq \tilde U_{L}(x, C_E) \leq \tilde U(C_E) <\infty, \quad \forall x\in E\ \hbox{and}\ L>0,
\]
where the second and the third inequalities follow from $V'_L(x)\ge V'_L(b)\ge C_E$ and $p\mapsto \tilde U_L(x,p)$ is by definition nonincreasing. This shows that $\big\{\tilde U_{L} (x, V_{L}'(x))\big\}_{L>0}$ is uniformly bounded on $E$.

Recall from Proposition~\ref{prop:properties V_L} that each $V_{L}$ satisfies  
\begin{equation}\label{HJB V_L'}
\beta V_{L}(x)  =  \frac{1}{2} \sigma ^2 x^2V''_{L}( x) + (x^\alpha -\mu x)V'_{L}(x) + \tilde U_{L} (x, V'_{L}( x)),\quad \forall x>0.
\end{equation}
By the uniform boundedness on $E$ of $\{(x^\alpha -\mu x)V'_{L}(x)\}_{L>0}$, $\{\tilde U_{L} (x, V'_{L}(x))\}_{L>0}$, and $\{V_L(x)\}_{L>0}$ (thanks to Proposition~\ref{prop:properties V_L}), \eqref{HJB V_L'} entails the uniform boundedness of  $\{V''_{L}(x)\}_{L>0}$ on $E$. By the Arzela Ascoli Theorem, this implies $V'_L$ converges uniformly, up to some subsequence, on $E$. 
With $V_L$, $V_L'$, and $\tilde U_L$ all converging uniformly on $E$ (recall from the proof of Lemma~\ref{lem:viscosity V_infty} that $\tilde U_L$ converges uniformly to $\tilde U$), \eqref{HJB V_L'} implies that  $V''_L$ also converges uniformly on $E$.   

Now, with $V_L$ converging to $V_\infty$ and $V'_L$ converging uniformly on $E$, $V_\infty$ must be continuously differentiable with $V'_\infty = \lim_{L\to\infty} V'_L$ (up to some subsequence) in the interior of $E$. This, together with $V''_L$ converging uniformly on $E$, shows that $V'_\infty$ is continuously differentiable with $V''_\infty = \lim_{L\to\infty} V''_L$ (up to some subsequence) in the interior of $E$. Since $E$ is arbitrarily chosen, we conclude that $V_\infty\in C^2((0,\infty))$. In view of Lemma~\ref{lem:viscosity V_infty}, $V_\infty$ is a classical solution to \eqref{HJB V_infty}. 
\end{proof}

\begin{remark}\label{rem:strictly increasing V_infty}
In deriving the uniform boundedness of $\{\tilde U^{L} (x, V_{L}'(x))\}_{L>0}$ in the proof above, we particularly show that $V_\infty$ is strictly increasing on $(0,\infty)$, otherwise the viscosity solution property of $V_\infty$ (Lemma~\ref{lem:viscosity V_infty}) would be violated.
\end{remark}

Now, a verification argument connects $V_\infty$ to our value function $V$. 

\begin{theorem}\label{thm:V=V_infty}
The value function $V$ in \eqref{V} coincides with $V_\infty$ on $(0,\infty)$. Hence, $V$ is concave, strictly increasing, satisfies \eqref{linear bdd}, and solves \eqref{HJB V_infty} in the classical sense.   
\end{theorem}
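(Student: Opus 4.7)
The plan is to prove the equality $V = V_\infty$ by establishing both inequalities. The inequality $V_\infty \le V$ is immediate: since $\C_L \subseteq \C$ for every $L > 0$, the definitions give $V_L \le V$ pointwise, and passing to the limit in $L$ yields $V_\infty \le V$ on $(0,\infty)$.

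For the reverse inequality $V \le V_\infty$, the approach is a standard verification argument exploiting the classical solution property of $V_\infty$ from \propref{prop:classical V_infty}. Fix $c \in \C$ and $x > 0$; by \propref{prop:existence}, the state process $X = X^x$ exists uniquely, is strictly positive, and does not explode. Apply It\^o's formula to $e^{-\beta t} V_\infty(X_t)$ and substitute \eqref{HJB V_infty} together with the pointwise inequality $\tilde U(V_\infty'(X_t)) \ge U(c_t X_t) - c_t X_t V_\infty'(X_t)$; the drift of $d(e^{-\beta t} V_\infty(X_t))$ is thereby bounded above by $-e^{-\beta t} U(c_t X_t)$. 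To make the It\^o integral a true martingale (noting that $V_\infty'$ may blow up at $0$), introduce the localizing sequence $\tau_n := \inf\{t \ge 0 : X_t \notin (1/n,n)\}$, on which the integrand $e^{-\beta t}\sigma X_t V_\infty'(X_t)$ is bounded. Taking expectations on $[0, T \wedge \tau_n]$ and using $V_\infty \ge 0$ from \remref{rem:properties V_infty} yields
\begin{equation*}
\E\left[\int_0^{T \wedge \tau_n} e^{-\beta t} U(c_t X_t)\, dt\right] \le V_\infty(x) - \E\bigl[e^{-\beta(T \wedge \tau_n)} V_\infty(X_{T \wedge \tau_n})\bigr] \le V_\infty(x).
\end{equation*}
Since the integrand is nonnegative (as $U(y) \ge U(0) = 0$ for $y \ge 0$), sending $n \to \infty$ and then $T \to \infty$ via monotone convergence gives $\E[\int_0^\infty e^{-\beta t} U(c_t X_t)\, dt] \le V_\infty(x)$; taking the supremum over $c \in \C$ produces $V \le V_\infty$. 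The remaining assertions in the theorem — concavity, strict monotonicity, the linear growth bound \eqref{linear bdd}, and the classical solution property — then transfer directly from $V_\infty$ via \remref{rem:properties V_infty}, \remref{rem:strictly increasing V_infty}, and \propref{prop:classical V_infty}.

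The main technical point to verify is that the localization degenerates properly, namely $\tau_n \uparrow \infty$ almost surely. This rests on \propref{prop:existence}: because $X$ is continuous, strictly positive, and finite on $[0,\infty)$ under any admissible $c$, each sample path satisfies $\min_{[0,T]} X_t > 0$ and $\max_{[0,T]} X_t < \infty$ a.s.\ for every fixed $T$, so $\tau_n > T$ for $n$ large enough. It is worth emphasizing that the unboundedness of $c$ never enters the verification beyond this point: the HJB inequality handles arbitrary $c_t \ge 0$ automatically through the supremum defining $\tilde U$, so the argument proceeds identically to the bounded case despite the far wider admissible class.
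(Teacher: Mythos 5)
Your proposal is correct and follows essentially the same route as the paper: the easy inequality $V_\infty\le V$ from $\C_L\subseteq\C$, plus a verification argument applying It\^o's formula to $e^{-\beta t}V_\infty(X_t)$ and the HJB inequality to get $V\le V_\infty$. The only (harmless) difference is technical: the paper shows the stochastic integral is a true martingale on $[0,T]$ directly via the bound $xV_\infty'(x)\le V_\infty(x)\le x+\varphi_0$ and the moment estimates \eqref{eq2.5}, and then sends $T\to\infty$ using $\E[e^{-\beta T}V_\infty(X_T)]\to 0$, whereas you localize with $\tau_n$ and discard the terminal term by nonnegativity of $V_\infty$, concluding by monotone convergence.
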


\begin{proof}
Since $V_\infty$ is nonnegative, concave, and nondecreasing (Remark~\ref{rem:properties V_infty} (i)), $0\le V'_\infty(x) \leq V_\infty(x)/x$ for all $x>0$. Fix $x>0$. Then, for any $T>0$ and $c\in\C$, 
\begin{align*}
\E\left[\int_{0}^{T} ( e^{-\beta s}V'_\infty( X_s) X_s)^2 ds\right] &\leq  \E\left[\int_{0}^{T} ( e^{-\beta s}V_\infty( X_s))^2 ds\right]  \leq  \E\left[\int_{0}^{T} ( e^{-\beta s} ( X_s+\varphi_0))^2 ds\right] < \infty,
\end{align*}
where the second line follows from Remark~\ref{rem:properties V_infty} (i) and the finiteness is due to \eqref{eq2.5}. It follows that $\int_{0}^{t} e^{-\beta s}V_x( X_s) X_s dW_s$ is a martingale on $[0,T]$, for any $T>0$ and $c\in\C$. Now, fix $c\in\C$. By using Ito's formula, for any $T>0$, 
\begin{align}
\E[e^{-\beta T} V_\infty(X_T)] &= V_\infty(x)\notag\\
&+ \E\bigg[\int_{0}^{T} e^{-\beta t} \bigg(-\beta V_\infty(X_t) +  V'_\infty(X_t) ( X_t^{\alpha}  - \mu X_t- c_t X_t)dt + \frac{\sigma^2}{2}X_t^{2} V''_{\infty}(X_t) dt \bigg)\bigg] \notag\\ 
&\le V_\infty(x) - \E\left[\int_{0}^{T} e^{-\beta t} U(c_tX_t)dt  \right],\label{lalala}
\end{align}
where the inequality follows from $V_\infty$ satisfying \eqref{HJB V_infty} (Proposition~\ref{prop:classical V_infty}). As $T\to\infty$, we deduce from Remark~\ref{rem:properties V_infty} (i) and \eqref{eq2.5} that
\[
\E[e^{-\beta T} V_\infty(X_T) ] \leq \E\left[e^{-\beta T} (X_T+\varphi_0) \right] \leq e^{-\beta T} (2^{\eta -1} (x + T^{\eta}) +\varphi_0) \rightarrow 0,\quad \hbox{as}\ T\to\infty.  
\] 
Thus, we conclude from \eqref{lalala} that $V_\infty(x) \ge \E\big[\int_{0}^{\infty} e^{-\beta t} U(c_t X_t) dt \big]$ for all $c\in\C$, and thus $V_\infty(x) \ge V(x)$. On the other hand, by definition $V(x)\ge V_L(x)$ for all $L>0$, and thus $V(x)\ge V_\infty(x)$. We therefore conclude that $V(x)=V_\infty(x)$. 
The remaining assertions follow from Remark~\ref{rem:properties V_infty} (i), Remark~\ref{rem:strictly increasing V_infty}, and Proposition~\ref{prop:classical V_infty}. 
\end{proof}

While Theorem~\ref{thm:V=V_infty} associates $V$ with the nonlinear elliptic equation \eqref{HJB V_infty}, this is {\it not} a full characterization of $V$, as there may be multiple solutions to \eqref{HJB V_infty}. To further characterize $V$ as the {\it unique} classical solution to \eqref{HJB V_infty} among a certain class of functions, the standard approach is to stipulate an optimal control of feedback form, by which one can complete the verification argument; note that the proof of Theorem~\ref{thm:V=V_infty} amounts to the first half of the verification argument. 

As detailed in Section~\ref{sec:optimal consumption} below, although the form of a candidate optimal consumption process $\hat c$ can be readily read out from the equation \eqref{HJB V_infty}, it is highly nontrivial whether $\hat c$ is a well-defined stochastic process, due to the unboundedness of $\hat c$. This entails additional analysis of the value function $V$ and the capital per capita process $X$, as we will now introduce.


\section{Optimal Consumption}\label{sec:optimal consumption}

In view of \eqref{HJB V_infty}, one can heuristically stipulate the form of an optimal consumption process as
\begin{equation}\label{hat c}
\hat c_t := \hat c(X_t)\quad \hbox{for}\ t\ge 0,\qquad \hbox{with}\quad \hat c(x) := \frac{(U')^{-1}(V'(x))}{x}\quad \hbox{for}\ x>0, 
\end{equation}
where $X$ is the solution to the SDE \eqref{X} with $c_t$ replaced by $\hat c_t$, i.e. the solution to
\begin{equation}\label{X'}
dX_t = \left(X_t^\alpha  - \mu X_t - (U')^{-1}\left(V'(X_t)\right)  \right)dt - \sigma X_t dW_t,    \quad X_0 = x>0.
\end{equation}
For $\hat c$ in \eqref{hat c} to be well-defined, two questions naturally arise. First, it is unclear whether \eqref{X'} admits a solution: Proposition~\ref{prop:existence} is an existence result for \eqref{X}, specifically when $c$ is an {\it a priori} given process, without $X_t$ involved. 
Second, even if a solution $X$ to \eqref{X'} exists, it is in question whether $X$ is strictly positive, so that one does not need to worry about the problematic case ``$X_t =0$'' in \eqref{hat c}.  

For \eqref{X'} to admit a solution, we first observe that it is necessary to have $V'(0+) = \infty$. Indeed, if $c:= V'(0+) < \infty$, when $X$ is close enough to zero, the drift coefficient of \eqref{X'} will approach the constant $- (U')^{-1}\left(c\right)<0$, while the diffusion coefficient will tend to zero. This will eventually bring $X$ down to zero. When this happens, the drift and the diffusion coefficients will be precisely $- (U')^{-1}\left(c\right)<0$ and $0$ respectively, which will move $X$ further to take negative values. The drift coefficient of \eqref{X'}, however, is not well-defined for negative values of $X_t$. A solution to \eqref{X'}, as a result, cannot exist if  $V'(0+) < \infty$. 

The next result analyzes the behavior of $V$ as $x\downarrow 0$, and particularly establishes $V'(0+) = \infty$. 

\begin{lemma}\label{lem:V at 0}
The function $V$ defined in \eqref{V} satisfies the following:
\begin{itemize}
\item [(i)]  $V(0+) > 0$. 
\item [(ii)] Assume $U\in C^2((0,\infty))$. As $x\downarrow 0$, $V'$ explodes and is of the order of $x^{-\alpha}$. Specifically, 
\[
V'(0+) = \infty\quad \hbox{and}\quad \lim_{x\to 0+}x^\alpha V'(x) = \beta V(0+)>0.
\] 
Furthermore,
\begin{equation}\label{C*/x^alpha}
\lim_{x \downarrow 0} \frac{(U')^{-1} (V'(x))}{x^\alpha} = 0.
\end{equation}
\end{itemize}
\end{lemma}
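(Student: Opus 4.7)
For part (i), the plan is to exhibit one admissible consumption and bound $V$ from below using monotonicity of the state process in the initial condition. From the explicit formula \eqref{Z formula} derived in the proof of Proposition~\ref{prop:existence}, the map $x\mapsto X_t^x$ is non-decreasing almost surely, so $X_t^x\geq \widetilde X_t$ for every $x\geq 0$ and every $c\in\C$, where $\widetilde X$ is the strictly positive solution from Corollary~\ref{coro:X=0}. Taking the admissible constant control $c\equiv 1$ and using $U>0$ on $(0,\infty)$ together with $\widetilde X_t>0$ a.s.\ for $t>0$, we obtain $V(x)\geq \E\bigl[\int_0^\infty e^{-\beta t}U(\widetilde X_t)\,dt\bigr] =: C_0>0$ for every $x>0$, so $V(0+)\geq C_0>0$.

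For part (ii), the plan is to work with the HJB equation from Theorem~\ref{thm:V=V_infty},
\[ \beta V(x) = \tfrac{\sigma^2}{2}x^2V''(x) + (x^\alpha - \mu x)V'(x) + \tilde U(V'(x)), \]
together with two elementary consequences of the concavity of $V$: $V'$ is non-increasing, so $V'(0+)\in(0,\infty]$ exists, and $xV'(x)\leq V(x)-V(0+)\to 0$ as $x\downarrow 0$. The assumption $U\in C^2$ enters via $\tilde U\in C^1((0,\infty))$ with $\tilde U'(p)=-(U')^{-1}(p)$, which legitimizes the Taylor expansions below.

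First I would prove $V'(0+)=\infty$ by contradiction: suppose $V'(0+)=p_0<\infty$. Isolating $\tfrac{\sigma^2}{2}x^2V''(x)$ in the HJB and sending $x\downarrow 0$ gives $\lim x^2V''(x)=\tfrac{2}{\sigma^2}\bigl(\beta V(0+)-\tilde U(p_0)\bigr)$; concavity forces $\beta V(0+)\leq \tilde U(p_0)$, and strict inequality would make $V''(x)$ blow up like a negative constant times $1/x^2$, integrating to $V'(x)\to\infty$, already a contradiction. So $\beta V(0+)=\tilde U(p_0)$ and the leading $O(1)$ terms in the HJB cancel. A finer expansion, using $(x^\alpha-\mu x)V'(x) = p_0x^\alpha + o(x^\alpha)$ and Taylor-expanding $\tilde U$ around $p_0$, then isolates the surviving $-p_0 x^\alpha$ on the right-hand side, giving $V''(x)\sim -\tfrac{2p_0}{\sigma^2}x^{\alpha-2}$. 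Integrating over $(x,\delta)$ for a small fixed $\delta>0$ and using $\alpha<1$ yields $V'(x)\sim \tfrac{2p_0}{\sigma^2(1-\alpha)}x^{\alpha-1}\to\infty$, contradicting $V'(0+)=p_0<\infty$. Hence $V'(0+)=\infty$.

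With $V'(0+)=\infty$ established, $\tilde U(V'(x))\to \tilde U(\infty)=0$ and $\mu xV'(x)\to 0$, so the HJB rearranges to $x^\alpha V'(x) = \beta V(x)-\tfrac{\sigma^2}{2}x^2V''(x)+o(1)$; non-negativity of $-x^2V''$ yields the lower bound $\liminf_{x\downarrow 0}x^\alpha V'(x)\geq \beta V(0+)$. The matching upper bound, equivalently $\lim_{x\downarrow 0}x^2V''(x)=0$, I would prove by the same contradiction scheme as above: if $\limsup_{x\downarrow 0}x^\alpha V'(x)>\beta V(0+)$, then $V''(x)$ would acquire an $O(x^{-2})$ blow-up along a subsequence, which after integration contradicts the concavity bound $xV'(x)=o(1)$. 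This upper bound is the principal obstacle of the proof, as it requires converting a one-sided concavity estimate into a two-sided ODE estimate. Finally, \eqref{C*/x^alpha} follows immediately: setting $y=(U')^{-1}(V'(x))$, the relation $V'(x)=U'(y)$ gives $y\to 0$ as $x\to 0$, and concavity of $U$ with $U(0)=0$ gives $yU'(y)\leq U(y)\to 0$; the identity $y/x^\alpha = yU'(y)/\bigl(x^\alpha V'(x)\bigr)$ combined with $x^\alpha V'(x)\to \beta V(0+)>0$ yields the limit $0$.
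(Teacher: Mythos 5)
Parts (i) and \eqref{C*/x^alpha} are correct and essentially the paper's arguments (for (i) the paper sends $x\downarrow 0$ via Fatou where you invoke monotonicity of $z\mapsto Z^z_t$ in \eqref{Z formula}; same substance). Your proof that $V'(0+)=\infty$ is a genuinely different route: the paper first bootstraps $V$ to $C^3$, differentiates the HJB equation and applies L'Hospital-type identities, whereas you stay with the undifferentiated equation. Your route does work, with two caveats. You need $p_0:=V'(0+)>0$ (it follows from $V$ being strictly increasing and concave, and is needed both for $\tilde U(p_0)<\infty$ and for the $-p_0x^\alpha$ term to be nontrivial). Also, the asserted asymptotic $V''(x)\sim -\tfrac{2p_0}{\sigma^2}x^{\alpha-2}$ is more than you can justify, since $\tilde U(V'(x))-\tilde U(p_0)$ is only known to be nonnegative, not $o(x^\alpha)$; but only the one-sided bound is needed, and it is immediate: $\tfrac{\sigma^2}{2}x^2V''(x)\le \beta\bigl(V(x)-V(0+)\bigr)-(x^\alpha-\mu x)V'(x)\le -\tfrac{p_0}{4}x^\alpha$ for small $x$, and integrating this over $(x,\delta)$ gives the contradiction.

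The genuine gap is the upper bound $\limsup_{x\downarrow 0}x^\alpha V'(x)\le \beta V(0+)$, i.e.\ exactly the step you flag as the principal obstacle. If the $\limsup$ exceeds $\beta V(0+)$, the HJB only gives $-V''(x_n)\ge c\,x_n^{-2}$ along some subsequence $x_n\downarrow 0$, and a pointwise bound on a possibly sparse set of points yields nothing upon integration; there is no contradiction with $xV'(x)\to 0$ unless the bound holds on a full interval $(0,\delta)$. As written, "which after integration contradicts the concavity bound" is not a proof. Two ways to close it: (a) the paper's device — pass to a subsequence along which $x_nV'(x_n)\to\ell\in[0,\infty)$, argue that $x_n^2V''(x_n)\to-\ell$, and read off from the HJB that $\ell>0$ is impossible (it would force $x_n^\alpha V'(x_n)=x_n^{\alpha-1}\cdot x_nV'(x_n)\to\infty$) while $\ell=0$ forces $x_n^\alpha V'(x_n)\to\beta V(0+)$; or (b) an ODE argument for $h(x):=x^\alpha V'(x)$: from $h'(x)=\tfrac{\alpha}{x}h(x)-\tfrac{2}{\sigma^2}x^{\alpha-2}\bigl(h(x)-\beta V(x)+o(1)\bigr)$ and $x^{\alpha-2}\gg x^{-1}$, one checks $h'<0$ wherever $h\ge\beta V(0+)+\eps$ and $x$ is small, so that $\{h\ge\beta V(0+)+\eps\}$ is eventually an interval of the form $(0,x_0]$, on which your integration does produce $xV'(x)\ge\eps/\sigma^2+o(1)$, contradicting $xV'(x)\to 0$. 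Either way, an additional argument is required to make this step rigorous.
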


\begin{proof}
(i) Consider $\bar c\in\C$ with $\bar c \equiv 1$. For any $x>0$, in view of \eqref{Z formula}, the corresponding capital per capita process $X^x_t$ is given by 
\[
X^x_t = G_t^{-\frac{1}{1-\alpha}}\left(x^{1-\alpha}+(1-\alpha)\int_0^t G_s ds\right)^{\frac{1}{1-\alpha}}, 
\]
where $G_t$ is given as in \eqref{G formula} with $c_t$ replaced by the constant $1$. Then, by the definition of $V$, 
\[
V(x) \ge \E\left[\int_0^\infty e^{-\beta t} U(X^x_t)dt\right] = \E\left[\int_0^\infty e^{-\beta t} U\left( G_t^{-\frac{1}{1-\alpha}}\left(x^{1-\alpha}+(1-\alpha)\int_0^t G_s ds\right)^{\frac{1}{1-\alpha}}\right) dt\right].
\] 
As $x\downarrow 0$, Fatou's lemma gives $V(0+)\ge \E\big[\int_0^\infty e^{-\beta t} U(((1-\alpha)\int_0^t G_{s,t}^{-1} ds)^{\frac{1}{1-\alpha}} dt\big]>0$, where $G_{s,t}$ is given as in \eqref{G_s,t} with $c_t$ replaced by the constant $1$.  

(ii) By contradiction, assume that $c := V'(0+) < \infty$. Note that $c>0$ must hold, as $V$ is concave and strictly increasing (Theorem~\ref{thm:V=V_infty}). Consider $I(y) := (U')^{-1}(y)$ for $y\in(0,\infty)$. With $U\in C^2((0,\infty))$, the inverse function theorem implies that $I\in C^1((0,\infty))$ with $I'(y)=1/U''(y)$. Thanks again to Theorem~\ref{thm:V=V_infty}, we have
\begin{equation}\label{HJB V_infty'}
\beta V(x)  =  \frac{1}{2} \sigma ^2 x^2V''(x) + (x^\alpha-\mu x) V'(x) + U(I(V'(x)))- I(V'(x))V'(x),\quad \forall x>0.
\end{equation}
We can then express $V''(x)$ in terms of the functions $x$, $V(x)$, $V'(x)$, $I(V'(x))$, and $U(I(V'(x)))$. Since each of these functions is continuously differentiable, we have $V\in C^3((0,\infty))$. By using L'Hospital's rule, 
\begin{equation}\label{L'Hospital'}
c = \lim_{x \downarrow 0}V'(x) = \lim_{x \downarrow 0}\frac{xV'(x)}{x} = \lim_{x \downarrow 0} \left(V'(x) + xV''(x)\right), 
\end{equation}
which implies $\lim_{x \downarrow 0}xV''(x) =0$. The same argument in turn gives 
\begin{align*}
0 = \lim_{x \downarrow 0}xV''(x) = \lim_{x \downarrow 0}\frac{x^2V''(x)}{x} = \lim_{x \downarrow 0} \left(2xV''(x) + x^2V'''(x) \right),
\end{align*}
leading to $\lim_{x \downarrow 0}x^2V'''(x)=0$. Now, by differentiating both sides of \eqref{HJB V_infty'} and multiplying them by $x^{1-\alpha}$, we get 
\begin{align}
\beta x^{1-\alpha} V'(x) &=  \sigma^2  x^{2-\alpha} V''(x)+ \frac{1}{2} \sigma^2 x^{3-\alpha} V'''(x) +x V''(x) + \alpha V'(x)\notag\\
&\ \ \  -\mu x^{1-\alpha} V'(x) - \mu x^{2-\alpha} V''(x)  -x^{1-\alpha} I(V'(x)) V''(x),\label{HJB'''}    
\end{align}
where the last term is obtained by noting that $U'\circ I$ is the identity map. As $x\downarrow 0$ in \eqref{HJB'''}, we get
\begin{align*}
0 = \alpha c  + \lim_{x \downarrow 0} x^{1-\alpha} I(V'(x)) (-V''(x)).
\end{align*}
This is a contradiction by noting that $\alpha c>0$ and the limit above is nonnegative (as $I$ is a positive function and $V$ is concave). We therefore conclude that $V'(0+) = \infty$.

Now, since $V$ satisfies \eqref{linear bdd} (Theorem~\ref{thm:V=V_infty}), we have $\limsup_{x\downarrow 0}x V'(x)<\infty$. Take an arbitrary sequence $\{x_n\}_{n\in\N}$ such that $x_n\downarrow 0$ and $x_n V'(x_n)$ converges as $n\to\infty$. Let $\ell:= \lim_{n\to\infty}x_n V'(x_n) < \infty$. Similarly to \eqref{L'Hospital'},  
\begin{equation*}
\ell = \lim_{n \to \infty} x_n V'(x_n) = \lim_{n \to \infty} \frac{x_n^2 V'(x_n)}{x_n} =\lim_{n \to \infty} \left(2x_n V'(x_n) +  x_n^2 V'' (x_n)\right) = 2\ell + \lim_{n \to \infty}x_n^2 V'' (x_n),
\end{equation*}
which yields $\lim_{n \to \infty}x_n^2 V''(x_n) = -\ell$. Recalling that $V$ is a classical solution to \eqref{HJB V_infty}, we have 
\[
\beta V(x_n)  =  \frac{1}{2} \sigma ^2 x_n^2V''(x_n) + (x_n^\alpha -\mu x_n)V'(x_n) + \tilde U (V'(x_n))\quad \hbox{for all}\ n\in\N.     
\]
As $n\to\infty$, since $V'(0+)=\infty$ implies $\tilde U (V'(x_n))\to 0$, we obtain 
\[
\beta V(0+) = -\left(\frac{1}{2}\sigma^2 +\mu\right) \ell + \lim_{n\to\infty} x_n^\alpha V'(x_n).
\]
If $\ell>0$, then $\lim_{n\to\infty} x_n^\alpha V'(x_n) = \ell \lim_{n\to\infty}x_n^{\alpha-1} = \infty$, which would violate the above equality. Thus, $\ell=0$ must hold. Since $\{x_n\}_{n\in\N}$ above is arbitrarily chosen, we conclude that $\lim_{x\downarrow 0} x^\alpha V'(x) = \beta V(0+)>0$, where the inequality follows from (i).

Finally, to prove \eqref{C*/x^alpha}, observe that
$0 \leq \tilde U(V'(x) ) 
= U ((U')^{-1}(V'(x))) - V'(x)(U')^{-1}(V'(x))$ for all $x>0$,  leading to 
\[  
0 \leq V'(x)(U')^{-1}(V'(x)) \leq U ((U')^{-1}(V'(x)))\quad \forall x>0.
\] 
As $x\downarrow 0$, since $V'(0+) = \infty$ and $U(0) = 0$, the right hand side above approaches zero, which implies $$\lim_{x \downarrow 0} V'(x)(U')^{-1}(V'(x))=0.$$ This, together with $\lim_{x \downarrow 0} x^\alpha V'(x) = \beta V(0+)>0$, gives \eqref{C*/x^alpha}. 
\end{proof}

On the strength of Lemma~\ref{lem:V at 0}, we are ready to present the existence result for \eqref{X'}.

\begin{proposition}\label{prop:X'}
Suppose $U\in C^2((0,\infty))$. For any $x>0$, there exists a unique strong solution to \eqref{X'}, which is strictly positive a.s.
\end{proposition}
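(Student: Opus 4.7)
The plan is to exploit the transformation $Z_t := X_t^{1-\alpha}$ used in the proof of Proposition~\ref{prop:existence}. Setting $h(x) := (U')^{-1}(V'(x))/x^\alpha$ for $x>0$ and $h(0) := 0$, the limit \eqref{C*/x^alpha} in Lemma~\ref{lem:V at 0} makes $h$ continuous on $[0,\infty)$, while the hypothesis $U\in C^2$ together with $V\in C^2((0,\infty))$ (Proposition~\ref{prop:classical V_infty}) renders $h$ of class $C^1$ on $(0,\infty)$. An application of It\^{o}'s formula (valid so long as $X>0$) then converts \eqref{X'} into the autonomous one-dimensional SDE
\[
dZ_t = (1-\alpha)\bigl[1 - \tilde\mu Z_t - \tilde h(Z_t)\bigr]\,dt - \sigma(1-\alpha)\,Z_t\,dW_t,\qquad Z_0 = x^{1-\alpha}>0,
\]
with $\tilde\mu := \mu + \frac{1}{2}\alpha\sigma^2$ and $\tilde h(z) := h(z^{1/(1-\alpha)})$. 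Crucially, the drift of this equation equals $1-\alpha>0$ at $z=0$, the diffusion $z\mapsto -\sigma(1-\alpha)z$ is Lipschitz on $[0,\infty)$, and both coefficients are smooth on $(0,\infty)$.

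On each compact subinterval $[1/n,n]$ the coefficients of the $Z$-equation are Lipschitz, so classical strong-existence theory yields a unique local solution up to the exit time $\tau_n$. To extend globally, I would apply Feller's test for explosion to $Z$. Near $z=0$, the drift is bounded below by a positive constant while $\tilde\sigma^2(z) = \sigma^2(1-\alpha)^2 z^2$ vanishes quadratically, so $\int_{0+}2\tilde b(u)/\tilde\sigma^2(u)\,du = +\infty$; this forces the scale function to satisfy $s(0+) = -\infty$, hence $0$ is unattainable. Near $z=\infty$ the drift is dominated by the mean-reverting term $-(1-\alpha)\tilde\mu z$ (with the linear-growth bound on $V$ from Theorem~\ref{thm:V=V_infty} used to control $\tilde h$ at infinity), forcing $s(\infty) = +\infty$ and ruling out explosion. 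Both boundaries being inaccessible, $Z$ is a globally defined, strictly positive process. Global pathwise uniqueness on $(0,\infty)$ then follows by combining the local uniqueness on each $[1/n,n]$ with the non-attainability of the boundaries, supplemented where needed by the probabilistic techniques of Nakao~\cite{Nakao72} and Yamada~\cite{Yamada73} to handle the merely-continuous (rather than Lipschitz) drift at the endpoints.

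Inverting the transformation, $X_t := Z_t^{1/(1-\alpha)}$ is continuous and strictly positive, and another application of It\^{o}'s formula --- now valid on all of $[0,\infty)$ --- shows that it solves \eqref{X'}; uniqueness for $Z$ transfers directly to $X$. The chief obstacle is the Feller-test computation at $z=0$, which hinges on the identity $\tilde h(0+) = 0$, i.e., on the delicate asymptotic \eqref{C*/x^alpha}. Without this limit the drift of $Z$ at the origin could fail to remain strictly positive, the scale-function integral could converge, and $0$ could become attainable --- precisely the pathology that would render the feedback consumption $\hat c$ in \eqref{hat c} ill-defined upon $X_t$ reaching $0$.
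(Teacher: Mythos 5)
Your proposal is essentially correct, but it reaches the result by a genuinely different route from the paper. The paper works directly with the $X$-equation: it first produces a weak solution on $(0,\infty)$ via \cite[Theorem 5.5.15]{KS-book-91}, shows non-attainability of $0$ and $\infty$ by computing the two scale-function integrals $A_1,A_2$ for $X$ (with \eqref{C*/x^alpha} supplying exactly the bound $(U')^{-1}(V'(y))<\tfrac12 y^\alpha$ near $0$ that you use to get $\tilde h(0+)=0$), and then proves pathwise uniqueness by patching Nakao's theorem (on compacts where the diffusion $\sigma x$ is bounded away from zero) with Yamada's decreasing-drift criterion (on $(x^*,\infty)$, using only that concavity of $V$ makes $x\mapsto(U')^{-1}(V'(x))$ nondecreasing), concluding by Yamada--Watanabe. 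Your substitution $Z=X^{1-\alpha}$ turns the non-Lipschitz term $x^\alpha$ into the constant $1-\alpha$, so local Lipschitz theory on compacts of $(0,\infty)$ replaces the Nakao/Yamada patching, and your Feller computation for $Z$ is the paper's computation for $X$ read in the new coordinate; the decisive input \eqref{C*/x^alpha} enters identically in both. What your route buys is a more elementary and transparent uniqueness argument; what it costs is the hypothesis that $(U')^{-1}\circ V'$ be locally Lipschitz, which requires $(U')^{-1}\in C^1$, i.e.\ $U''<0$ everywhere --- an assumption the paper does make implicitly in Lemma~\ref{lem:V at 0} but deliberately avoids in its uniqueness step. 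Your hedge about invoking Nakao and Yamada ``at the endpoints'' is superfluous once both boundaries are shown inaccessible, but it is exactly the right repair if $U''$ is allowed to vanish at interior points. Finally, to make the uniqueness claim complete you should say explicitly that \emph{any} solution of \eqref{X'}, stopped at its exit time from $[1/n,n]$, transforms via It\^{o} into a solution of the $Z$-equation and therefore coincides with the constructed one on that interval; letting $n\to\infty$ then transfers both strict positivity and uniqueness to all solutions, not merely the one you built.
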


\begin{proof}
We will first establish the existence of a weak solution to \eqref{X'}, which is strictly positive a.s. Then, we will prove that pathwise uniqueness holds for \eqref{X'}. By \cite[Section 5.3.D]{KS-book-91}, this gives the desired result that a unique strong solution exists and it is strictly positive a.s.

{\bf Step 1: Construct a weak solution to \eqref{X'} that is strictly positive a.s.} Thanks to the argument in \cite[Theorem 5.5.15]{KS-book-91}, with $\R$ replaced by $(0,\infty)$, there exists a weak solution $X$ to \eqref{X'} up to the explosion time
\[
S := \lim_{n\to\infty} S_n,\quad \hbox{where}\quad S_n := \inf\{t\ge 0 : X_t\notin (1/n,n)\}.
\] 
We will show that $\P(S=\infty)=1$. In view of Feller's test for explosion (see e.g. \cite[Theorem 5.5.29]{KS-book-91}), as well as \cite[Theorem 5.5.27]{KS-book-91}, it suffices to prove that for any $\ell\in (0,\infty)$,
\begin{equation}\label{A1}
A_1:= \int_{\ell}^\infty \exp\left(-2\int_r^\ell \frac{y^\alpha-\mu y -(U')^{-1}(V'(y))}{\sigma^2 y^2} dy \right) dr = \infty,
\end{equation}
and
\begin{equation}\label{A2}
A_2:= \int_{0+}^\ell \exp\left(2\int_r^\ell \frac{y^\alpha-\mu y -(U')^{-1}(V'(y))}{\sigma^2 y^2} dy \right) dr = \infty.
\end{equation}
Let $C_1:= \exp\left(-\frac{2}{\sigma^2} \left(\frac{\ell^{\alpha-1}}{1-\alpha} + \mu \log(\ell)\right)\right)>0$. Observe that 
\begin{align*}
A_1 &\ge \int_{\ell}^\infty \exp\left(-2\int_r^\ell \frac{y^\alpha-\mu y}{\sigma^2 y^2} dy \right) dr = C_1 \int_{\ell}^\infty \exp\left(\frac{2}{\sigma^2 (1-\alpha)} \left(\frac{1}{r}\right)^{1-\alpha} \right) r^{\frac{2\mu}{\sigma^2}} dr\\
&\ge C_1 \int_\ell^\infty  r^{\frac{2\mu}{\sigma^2}} dr =\infty,
\end{align*}
which gives \eqref{A1}. On the other hand, by \eqref{C*/x^alpha}, there exists $0<\delta< \ell$ such that $(U')^{-1}(V'(y))<\frac{1}{2} y^\alpha$ for $0<y<\delta$. It follows that
\begin{align*}
A_2 &\ge \int_{0+}^\delta \exp\left(2\int_r^\ell \frac{y^\alpha-\mu y -(U')^{-1}(V'(y))}{\sigma^2 y^2} dy \right) dr \\
&= \int_{0+}^\delta \exp\left(2\int_r^\delta \frac{y^\alpha-\mu y -(U')^{-1}(V'(y))}{\sigma^2 y^2} dy + 2\int_\delta^\ell \frac{y^\alpha-\mu y -(U')^{-1}(V'(y))}{\sigma^2 y^2} dy \right) dr\\
&= C_2 \int_{0+}^\delta \exp\left(2\int_r^\delta \frac{y^\alpha-\mu y -(U')^{-1}(V'(y))}{\sigma^2 y^2} dy \right) dr\\
&\ge C_2 \int_{0+}^\delta \exp\left(\frac{2}{\sigma^2}\int_r^\delta \frac{1}{2} y^{\alpha-2} - \mu y^{-1} dy  \right) dr\\
&\ge C_2 C_3 \int_{0+}^\delta \exp\left(\frac{1}{\sigma^2(1-\alpha)} \left(\frac{1}{r}\right)^{1-\alpha}\right) r^{\frac{2\mu}{\sigma^2}} dr = \infty,
\end{align*}
where $C_2 := \exp\left(2\int_\delta^\ell \frac{y^\alpha-\mu y -(U')^{-1}(V'(y))}{\sigma^2 y^2} dy\right)$, $C_3 := \exp\left(\frac{-\delta^{\alpha-1}}{\sigma^2(1-\alpha)}\right) \delta^{-\frac{2\mu}{\sigma^2}}$, and the fourth line above follows from $(U')^{-1}(V'(y))<\frac{1}{2} y^\alpha$ for $0<y<\delta$. This readily shows \eqref{A2}. We therefore conclude that the weak solution $X$ takes values in $(0,\infty)$ a.s. 

{\bf Step 2: Show that pathwise uniqueness holds for \eqref{X'}.} Let $x^* >0$ be the unique maximizer of $\sup_{x\ge 0}\{x^\alpha - \mu x\}$. Observe that $x\mapsto x^\alpha-\mu x$ is strictly increasing on $(0,x^*)$ and strictly decreasing on $(x^*,\infty)$. Also, the concavity of $V$ (Theorem~\ref{thm:V=V_infty}) implies that $V'$ is nonincreasing. Since $U$ is strictly concave, $U'$ is strictly decreasing, and so is $(U')^{-1}$. It follows that $x\mapsto (U')^{-1}(V'(x))$ is nondecreasing. 
We then conclude that the drift coefficient $b(x) := x^\alpha-\mu x - (U')^{-1}(V'(x))$ of \eqref{X'} is strictly decreasing on $(x^*,\infty)$. 

Besides the weak solution $X$ in Step 1, let $\overline X$ be another weak solution to \eqref{X'}, with $(\Omega, \F,\P)$, $W$, and the initial value $x>0$ all the same as those of $X$. By the same argument in Step 1, $\overline X$ takes values in $(0,\infty)$ a.s. For each $N\in\N$, consider
\[
\tau_N := \inf\{t\ge 0 : X_t \le 1/N 
\}.
\]  
We claim that for any $x>0$,
\begin{equation}\label{pu}
\P\left(X^x_{t\wedge \tau_N} = \overline X^x_{t\wedge \tau_N},\ \forall t\ge 0\right) =1,\quad \forall N\in\N.
\end{equation} 

Pick an arbitrary $\eps>0$, and let $x_0:= x^*+\eps$. Fix $N\in\N$. If the initial value $x< x_0$, since the diffusion coefficient $a(u):=\sigma u$ of \eqref{X'} is bounded away from zero on $[1/N,x_0]$, the argument in \cite[Theorem]{Nakao72} (with $c$ and $M$ therein replaced by $\sigma/N$ and $\sigma x_0$ in our case) implies 
\begin{equation}\label{pu1}
\P\left(X^x_{t\wedge \tau_N\wedge \tau_{x_0}} = \overline X^x_{t\wedge \tau_N\wedge \tau_{x_0}},\ \forall t\ge 0\right) =1,
\end{equation}
where $\tau_0 := \inf\{t\ge 0 : X^x_t \ge x_0\}$. On the other hand, if the initial value $x\ge x_0$, since the drift coefficient $b(u)$ of \eqref{X'} is strictly decreasing on $(x^*,\infty)$, \cite[Example 1.1]{Yamada73} asserts that 
\begin{equation}\label{pu2}
\P\left(X^x_{t\wedge \tau_{x^*}} = \overline X^x_{t\wedge  \tau_{x^*}},\ \forall t\ge 0\right) =1,
\end{equation}
where $\tau_{x^*} := \inf\{t\ge 0 : X^x_t \le x^*\}$. Note that \eqref{pu1} and \eqref{pu2} already imply the desired result \eqref{pu}. Indeed, if the initial value $x<x_0$, we can define a sequence of stopping times recursively as follows: $\tau_0 := 0$, 
\[
\tau_{2n-1} : = \inf\{t\ge \tau_{2n-2} :  X^x_t \ge x_0\},\quad \tau_{2n} := \inf\{t\ge \tau_{2n-1}: X^x_t \le x^*\},\quad \forall n\in\N. 
\]
Then, by using \eqref{pu1} and \eqref{pu2} alternately on the time intervals $[\tau_{n-1},\tau_n]$, $n=1,2,...$, we obtain \eqref{pu}. If the initial value $x\ge x_0$, we can similarly define a sequence of stopping times recursively as follows: $\tau_0 := 0$, 
\[
\tau_{2n-1} : = \inf\{t\ge \tau_{2n-2} : X^x_t \le x^* \},\quad \tau_{2n} := \inf\{t\ge \tau_{2n-1}:  X^x_t \ge x_0\},\quad \forall n\in\N. 
\]
By applying \eqref{pu2} and \eqref{pu1} alternately on the time intervals $[\tau_{n-1},\tau_n]$, $n=1,2,...$, we again obtain \eqref{pu}. 

Finally, since $X$ is strictly positive a.s., $\tau_N\to\infty$ a.s. as $N\to\infty$. We then conclude from \eqref{pu} that $\P\left(X^x_{t} = \overline X^x_{t},\ \forall t\ge 0\right) =1$, for all $x>0$. That is, pathwise uniqueness holds for \eqref{X'}, as desired. 
\end{proof}

\begin{remark}\label{rem:Girsanov}
With bounded consumptions and a finite horizon $T>0$, \cite[Lemma 6.1]{MZ08} constructs a strictly positive solution to \eqref{X'} easily, through a change of measure and using Girsanov's theorem. This does not work in our case. With unbounded consumptions, the same change of measure is not well-defined. Also, applying Girsanov's theorem requires some finite horizon. In view of this, Proposition~\ref{prop:X'} complements \cite[Lemma 6.1]{MZ08}, by providing a new, different construction that accommodates both unbounded consumptions and infinite horizon. 
\end{remark}

Proposition~\ref{prop:X'} deals with the SDE \eqref{X'}, induced by the value function $V$. In fact, the same arguments can be applied to SDEs induced by a much larger class of functions. 

\begin{corollary}\label{coro:X' with u}
Suppose $U\in C^2((0,\infty))$. Let $u\in C^1((0,\infty))$ be strictly increasing, concave, and satisfy
\begin{equation}\label{C*/x^alpha'}
\lim_{x \downarrow 0} \frac{(U')^{-1} (u'(x))}{x^\alpha} = 0.
\end{equation}
Then, for any $x>0$, the SDE 
\begin{equation}\label{X''}
dX_t = \left(X_t^\alpha  - \mu X_t - (U')^{-1}\left(u'(X_t)\right)  \right)dt - \sigma X_t dW_t,    \quad X_0 = x
\end{equation}
admits a unique strong solution, which is strictly positive a.s.
\end{corollary}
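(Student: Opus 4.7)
The plan is to replay the two-step argument of Proposition~\ref{prop:X'} almost verbatim, observing that the only features of $V$ exploited there are precisely the hypotheses imposed on $u$ here: strict monotonicity and concavity (used to establish monotonicity of the drift coefficient), together with the asymptotic condition \eqref{C*/x^alpha} at the origin (used for the Feller integral near $0$). I would begin with the observation that since $u$ is strictly increasing, concave, and $C^1$ on $(0,\infty)$, one has $u'>0$ throughout, so $(U')^{-1}(u'(x))$ is a well-defined positive finite quantity and the drift $b(x):=x^\alpha-\mu x-(U')^{-1}(u'(x))$ together with the diffusion $\sigma x$ are continuous on $(0,\infty)$.

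For weak existence, I would invoke \cite[Theorem 5.5.15]{KS-book-91} on the open state space $(0,\infty)$ in place of $\R$, producing a weak solution up to the explosion time $S:=\lim_{n\to\infty}S_n$ with $S_n:=\inf\{t\ge 0:X_t\notin(1/n,n)\}$. To rule out explosion I would verify the two Feller integrals \cite[Theorems 5.5.27 and 5.5.29]{KS-book-91}. The integral at $+\infty$ goes through exactly as in the proof of Proposition~\ref{prop:X'}, since $(U')^{-1}(u'(y))\ge 0$ contributes only a nonpositive term to the exponent and the crude lower bound obtained there depends on $\alpha,\mu,\sigma$ but not on the particular input function. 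For the integral near $0$, the hypothesis \eqref{C*/x^alpha'} lets me pick $\delta\in(0,\ell)$ with $(U')^{-1}(u'(y))<\tfrac12 y^\alpha$ on $(0,\delta)$, after which the same chain of inequalities used for $A_2$ in Proposition~\ref{prop:X'} forces divergence.

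For pathwise uniqueness, concavity of $u$ makes $u'$ nonincreasing, and strict concavity of $U$ makes $(U')^{-1}$ strictly decreasing, so $x\mapsto(U')^{-1}(u'(x))$ is nondecreasing and hence $b$ is strictly decreasing on $(x^*,\infty)$, where $x^*$ is the unique maximizer of $x^\alpha-\mu x$. On excursions of the process staying in $[x^*,\infty)$ I would apply Yamada's criterion \cite[Example 1.1]{Yamada73}, while on excursions remaining in a compact subinterval $[1/N,x_0]$ with $x_0>x^*$ the diffusion $\sigma x$ is bounded away from zero, so Nakao's criterion \cite[Theorem]{Nakao72} applies. Splicing these comparisons along the alternating stopping-time sequence of Proposition~\ref{prop:X'} yields pathwise uniqueness on $[0,\tau_N]$ for each $N$; sending $N\to\infty$ and using the strict positivity from the weak-existence step gives global pathwise uniqueness. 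Yamada--Watanabe \cite[Section 5.3.D]{KS-book-91} then upgrades weak existence plus pathwise uniqueness to a unique strong solution, necessarily strictly positive a.s. I do not expect a substantive obstacle: the whole point is that Lemma~\ref{lem:V at 0} is bypassed here because \eqref{C*/x^alpha'} is postulated outright, and every other step in the proof of Proposition~\ref{prop:X'} uses no more than concavity and strict monotonicity of the input function.
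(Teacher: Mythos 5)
Your proposal is correct and follows exactly the route the paper takes: the published proof of this corollary simply says to repeat the two-step argument of Proposition~\ref{prop:X'} with $V$ replaced by $u$, noting that Step 1 uses only $u'>0$ and \eqref{C*/x^alpha'} while Step 2 uses only the concavity of $u$. Your verification that each ingredient of that proof (the Feller integrals at $\infty$ and at $0+$, the monotonicity of the drift on $(x^*,\infty)$, the Nakao/Yamada splicing, and Yamada--Watanabe) depends on nothing beyond the stated hypotheses is precisely the content of the paper's argument.
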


\begin{proof}
The result can be established by following the proof of Proposition~\ref{X'}, with $V$ replaced by $u$. Specifically, Step 1 in the proof can be carried out thanks to $u'(x)>0$ and \eqref{C*/x^alpha'}, while Step 2 relies on the concavity of $u$.
\end{proof}

Let $\mathcal U$ denote the class of functions $u\in C^2((0,\infty))\cap C([0,\infty))$ that are nonnegative, strictly increasing, concave, satisfying \eqref{C*/x^alpha'} and the following linear growth condition: there exists $C>0$ such that 
\begin{equation}\label{linear growth}
u(x)\le C(1+x)\quad \hbox{for all}\ x\ge 0. 
\end{equation}

Now, we are ready to present the main result of this paper.

\begin{theorem}\label{thm:main}
Suppose $U\in C^2((0,\infty))$. The function $V$ defined in \eqref{V} is the unique classical solution to \eqref{HJB V_infty} among functions in $\mathcal U$. Moreover, $\hat c\in \C$ defined by \eqref{hat c}, with $X$ being the unique strong solution to \eqref{X'}, is an optimal consumption process for \eqref{V}.  
\end{theorem}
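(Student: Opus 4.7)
The claim packages three assertions: (i) $V\in\mathcal U$; (ii) $\hat c$ is optimal; (iii) $V$ is the unique solution in $\mathcal U$. Item (i) is essentially a bookkeeping exercise: $V\in C^2((0,\infty))$ is concave and strictly increasing by Proposition~\ref{prop:classical V_infty}, Remark~\ref{rem:strictly increasing V_infty}, and Theorem~\ref{thm:V=V_infty}; nonnegativity and linear growth come from \eqref{linear bdd}; continuity at $0$ follows from concavity and $V(0+)>0$ in Lemma~\ref{lem:V at 0}(i) after setting $V(0):=V(0+)$; and \eqref{C*/x^alpha'} is Lemma~\ref{lem:V at 0}(ii). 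The substantive work for (ii) and (iii) is a single verification argument, applied first to $V$ and then to a competing $u\in\mathcal U$.

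\textbf{Optimality of $\hat c$.} Proposition~\ref{prop:X'} supplies a strictly positive strong solution $X$ to \eqref{X'}, so $\hat c_t=(U')^{-1}(V'(X_t))/X_t$ is well-defined; continuity of $X$ and of $V'$ on $(0,\infty)$ makes $t\mapsto\hat c_t(\omega)$ continuous for a.e.\ $\omega$, hence $\hat c\in\C$. Applying It\^o to $e^{-\beta t}V(X_t)$ and using \eqref{HJB V_infty} to replace $\tfrac{\sigma^2}{2}X_t^2V''(X_t)+(X_t^\alpha-\mu X_t)V'(X_t)$, the drift collapses to $-\tilde U(V'(X_t))-\hat c_t X_t V'(X_t)$; since $\hat c_t X_t=(U')^{-1}(V'(X_t))$ is precisely the pointwise maximizer in the definition of $\tilde U$, this quantity equals $-U(\hat c_t X_t)$. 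The bound $V'(x)x\le V(x)\le x+\varphi_0$ combined with the second-moment estimate in \eqref{eq2.5} makes the stochastic integral a true martingale, yielding
\[
\E[e^{-\beta T}V(X_T)]=V(x)-\E\!\left[\int_0^T e^{-\beta t}U(\hat c_t X_t)\,dt\right].
\]
The transversality bound from the proof of Theorem~\ref{thm:V=V_infty} sends the left side to $0$ as $T\to\infty$, giving $V(x)=\E\bigl[\int_0^\infty e^{-\beta t}U(\hat c_t X_t)\,dt\bigr]$, so $\hat c$ attains the supremum in \eqref{V}.

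\textbf{Uniqueness in $\mathcal U$.} Let $u\in\mathcal U$ be any classical solution to \eqref{HJB V_infty}. Corollary~\ref{coro:X' with u} provides a strictly positive strong solution $X^u$ to \eqref{X''}, and $\hat c^u_t:=(U')^{-1}(u'(X^u_t))/X^u_t$ is admissible by the same continuity argument as above. Rerunning the It\^o-plus-HJB calculation with $V,X,\hat c$ replaced by $u,X^u,\hat c^u$ gives $u(x)=\E\bigl[\int_0^\infty e^{-\beta t}U(\hat c^u_t X^u_t)\,dt\bigr]\le V(x)$. For the reverse inequality, pick any $c\in\C$ with associated state $X^c$ from Proposition~\ref{prop:existence}; It\^o applied to $e^{-\beta t}u(X^c_t)$ combined with the HJB yields the inequality
\[
u(x)\ge\E\!\left[\int_0^T e^{-\beta t}U(c_t X^c_t)\,dt\right]+\E[e^{-\beta T}u(X^c_T)],
\]
and the linear-growth bound on $u$ together with \eqref{eq2.5} forces $\E[e^{-\beta T}u(X^c_T)]\to 0$ as $T\to\infty$, so $u(x)\ge V(x)$. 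Hence $u=V$.

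\textbf{Main obstacle.} The delicate ingredient throughout is the martingale/transversality package: both the true-martingale property of $\int_0^T e^{-\beta t}\sigma u'(X_t)X_t\,dW_t$ and the vanishing of $\E[e^{-\beta T}u(X_T)]$ rest on the estimate $u'(x)x\le u(x)\le C(1+x)$. The right inequality is precisely the linear-growth condition \eqref{linear growth} imposed in $\mathcal U$; the left one comes from concavity of $u$ together with $u(0)\ge 0$, via the tangent inequality $u(0)\le u(x)-u'(x)x$. This clarifies the role of each defining condition of $\mathcal U$: linear growth underwrites the transversality, concavity plus nonnegativity underwrite the martingality, and \eqref{C*/x^alpha'} is what Corollary~\ref{coro:X' with u} needs to even construct the state process $X^u$ associated with a general $u$.
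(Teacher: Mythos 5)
Your proposal is correct and follows essentially the same route as the paper: a verification argument (It\^o plus the HJB equation, with the martingale property and transversality supplied by the linear-growth and concavity bounds together with \eqref{eq2.5}) applied first to $V$ with $\hat c$ to get optimality, then to a competing $u\in\mathcal U$ in both directions -- the supersolution inequality against arbitrary $c\in\C$ for $u\ge V$, and the candidate $\hat c^u$ built from Corollary~\ref{coro:X' with u} for $u\le V$. The only difference is that you spell out explicitly the steps the paper delegates to the proof of Theorem~\ref{thm:V=V_infty}, and your closing accounting of which condition in $\mathcal U$ underwrites which estimate is a useful clarification.
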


\begin{proof}
We know from Theorem~\ref{thm:V=V_infty} and Lemma~\ref{lem:V at 0} that $V\in \mathcal U$ and it solves \eqref{HJB V_infty} in the classical sense. By following the arguments in Theorem~\ref{thm:V=V_infty}, with $V_\infty$ and $c$ therein replaced by $V$ and $\hat c$, we note that the inequality in \eqref{lalala} now becomes equality, leading to $V(x) =  \E\big[\int_{0}^{\infty} e^{-\beta t} U(\hat c_t X^x_t) dt \big]$ for all $x>0$. This readily shows that $\hat c\in \C$ is an optimal consumption process for \eqref{V}.

For any $u\in \mathcal U$ that solves \eqref{HJB V_infty} in the classical sense, we can again follow the arguments in Theorem~\ref{thm:V=V_infty} to show that $u\ge V$. On the other hand, consider the consumption process
\begin{equation}\label{hat c^u}
\hat c^u_t :=  \hat c^u(X_t)\quad \hbox{for}\ t\ge 0,\qquad \hbox{with}\quad \hat c^u(x) := \frac{(U')^{-1}(u'(x))}{x}\quad \hbox{for}\ x>0, 
\end{equation}
where $X$ is the unique strong solution to \eqref{X''}, whose existence is guaranteed by Corollary~\ref{coro:X' with u}. Now, in \eqref{lalala}, if we replace $V_\infty$ and $c$ therein by $u$ and $\hat c^u$, the inequality becomes equality, leading to $u(x) =  \E\big[\int_{0}^{\infty} e^{-\beta t} U(\hat c^u_t X^x_t) dt \big]\le V(x)$ for all $x>0$. Thus, we conclude that $u=V$. 
\end{proof}

\begin{remark}\label{rem:characterize V_L}
In the characterization of $V$ in Theorem~\ref{thm:main}, condition \eqref{C*/x^alpha'} is the key to dealing with unbounded consumptions (recall that \eqref{C*/x^alpha'} is part of the definition of  $\mathcal U$). If we restrict ourselves to $\mathcal C_L$ in \eqref{C_L} for some $L>0$ (as in \cite{MZ08}), there is no need to impose \eqref{C*/x^alpha'}.

To see this, note that \eqref{C*/x^alpha'} can be re-written as
\[
\lim_{x\downarrow 0} \hat c^u(x) x^{1-\alpha} = 0,\quad \hbox{with $\hat c^u$ as in \eqref{hat c^u}}.
\]
That is, we require the optimal consumption to be dominated by $x^{1-\alpha}$ as $x\downarrow 0$. When we are restricted to $\mathcal C_L$, this requirement holds trivially, thanks to the bound $L>0$ for each $c\in \mathcal C_L$. Thus, for $V_L$ defined in \eqref{V_L}, the same arguments in Proposition~\ref{prop:X'}, Corollary~\ref{coro:X' with u}, and Theorem \ref{thm:main} can be carried out, without the need to impose \eqref{C*/x^alpha'}. This leads to the characterization: $V_L$ is the unique classical solution to \eqref{HJB V_L} among the class of functions $u\in C^2((0,\infty))\cap C([0,\infty))$ that are nonnegative, strictly increasing, concave, and satisfying \eqref{linear growth}.   
\end{remark}

\begin{remark}\label{rem:fills void}
In \cite{MZ08}, one is restricted to $\mathcal C_L$ in \eqref{C_L}. The main results, \cite[Theorems 4.2 and 6.2]{MZ08}, only show that the value function $V_L$ is a classical solution and that a feedback optimal consumption exists; there is no further characterization of $V_L$. At the end of \cite{MZ08}, the authors very briefly mention, without a proof, that $V_L$ is the unique solution. However, the class of functions among which $V_L$ is unique, the key ingredient of any PDE characterization, is missing. Theorem~\ref{thm:main}, along with the resulting characterization of $V_L$ in Remark~\ref{rem:characterize V_L}, fills this void. 
\end{remark}

We will demonstrate the use of Theorem~\ref{thm:main} explicitly in Proposition~\ref{prop:explicit case} below.


\subsection{Comparison with \cite{Morimoto08, Morimoto-book-10}}\label{subsec:comparison}
To the best of our knowledge, Morimoto \cite{Morimoto08, Morimoto-book-10} are the only prior works that consider unbounded consumptions in the stochastic Ramsey problem. Our studies complement \cite{Morimoto08, Morimoto-book-10} in two ways. 

First, \cite{Morimoto08, Morimoto-book-10} require the production function $F(k,y)$ to satisfy $F_k(0+,y)<\infty$ for all $y>0$. This provides technical conveniences: (i) The drift coefficient of the capital per capita process is Lipschitz (see e.g. (11) and (12) in \cite{Morimoto08}), such that the SDE has uniqueness of solutions even when the initial condition is 0. The value function $V$ is thus well-defined at $x=0$, with $V(0)=0$. (ii) The continuity of $V$ at $x=0$ is ensured, with $V(0+)=V(0) = 0$, which leads to a short simple proof for $V'(0+) = \infty$ (see the last two lines in the proof of \cite[Theorem 4.1]{Morimoto08}).  

Our contribution here is taking into account the classical, widely-used Cobb-Douglas production function \eqref{CD}, which violates $F_k(0+,y)<\infty$. In contrast to \cite{Morimoto08, Morimoto-book-10}, the drift coefficient of \eqref{X} is non-Lipschitz, such that \eqref{X} admits multiple solutions when the initial condition is 0 (see Corollary~\ref{coro:X=0}), leaving the value function $V$ undefined at $x=0$ (see Remark~\ref{rem:V(0)=0}). Moreover, proving $V'(0+) = \infty$ now requires much more involved analysis, as shown in Lemma~\ref{lem:V at 0}.   

Second, with unbounded consumptions considered, the framework in \cite{Morimoto08, Morimoto-book-10}, like ours, suffers the potential issue that the solution $X$ to \eqref{X'} may reach $0$ in finite time. The author of \cite{Morimoto08, Morimoto-book-10} does not analyze whether or not, or how likely, $X$ will reach $0$ in finite time, but simply restricts the Ramsey problem to the random horizon $[0,\tau_X]$, where $\tau_X$ is the first time $X$ reaches $0$. 
However, it is hard to imagine that in practice individuals would allow $X$, the capital per capita, to reach $0$, and enjoy no consumption at all afterwards (This is, nonetheless, what \cite[(36)]{Morimoto08} prescribes). 

In a reasonable economic model, an optimal consumption process should by itself prevents $X$ from reaching 0, so that there is no need to artificially introduce $\tau_X$.    
In this aspect, our paper complements \cite{Morimoto08, Morimoto-book-10}, by providing a framework in which $\tau_X=\infty$ is ensured under optimal consumption behavior.



\section{Comparison with Bounded Consumption in \cite{MZ08}}\label{sec:comparison}
For each $L>0$, one can solve the problem \eqref{V_L} by modifying the arguments in \cite{MZ08}, with an optimal consumption process given by
\begin{equation}\label{c_L}
\hat c_t^L := \hat c^L(X_t)\quad \hbox{for}\ t\ge 0,\qquad \hbox{with}\quad \hat c^L(x) :=  \min\bigg\{ \frac{(U')^{-1}(V_L'(x))}{x}, L\bigg\}\quad \hbox{for}\ x>0, 
\end{equation}
where $X$ is the unique strong solution to \eqref{X} with $c_t$ replaced by $\hat c^L_t$. 

Two questions are particularly of interest here. First, by switching from the bounded strategy $\hat c^L$, however large $L>0$ may be, to the possibly unbounded $\hat c$ in \eqref{hat c}, can we truly raise our expected utility? An affirmative answer will be provided below, which justifies economically the use of unbounded strategies. 
Second, for each $L>0$, do agents following $\hat c^L$ simply chop the no-constraint optimal strategy $\hat c$ at the bound $L>0$? In other words, does ``$\hat c^L = \hat c \wedge L$'' hold? As we will see, this fails in general, suggesting a more structural change from $\hat c^L$ to $\hat c$. 


Our first result shows that switching from $\hat c^L$ to $\hat c$ strictly increases expected utility at {\it all} levels of wealth (capital per capita) $x>0$, whenever $\hat c$ is truly unbounded.  

\begin{proposition}\label{prop:V>V_L}
Suppose $U\in C^2((0,\infty))$. Let $M:= \sup_{x>0} \hat c(x)$. 
\begin{itemize}
\item [(i)] If $M<\infty$, then for any $L\ge M$, $V_L(x)= V(x)$ for all $x>0$. 
\item [(ii)] If $M=\infty$, then for any $L>0$, $V_L(x)<V(x)$ for all $x>0$.  
\end{itemize}
\end{proposition}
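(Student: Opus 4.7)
The plan is to dispose of (i) by a direct admissibility argument, and attack (ii) via a strong maximum principle applied to $w := V - V_L$.

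For (i), observe that if $M < \infty$ and $L \ge M$, then the feedback policy $\hat c$ in \eqref{hat c} satisfies $\hat c_t = \hat c(X_t)\le M\le L$ pathwise, so $\hat c\in\mathcal{C}_L\subset\mathcal{C}$. Since $\hat c$ is optimal for $V$ by \thmref{thm:main},
\begin{equation*}
V(x) = \E\Big[\int_0^\infty e^{-\beta t}U(\hat c_t X^{x}_t)dt\Big] \le V_L(x),
\end{equation*}
and combining with the trivial $V_L\le V$ gives $V_L\equiv V$ on $(0,\infty)$.

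For (ii), I argue by contradiction: suppose $V(x_0)=V_L(x_0)$ for some $x_0>0$ and $L>0$. Then $w:=V-V_L\ge 0$ on $(0,\infty)$ and vanishes at $x_0$, so $x_0$ is an interior global minimum. Subtracting the HJB equations for $V$ (Theorem~\ref{thm:V=V_infty}) and $V_L$ (Proposition~\ref{prop:properties V_L}(ii)),
\begin{equation*}
\beta w = \tfrac{1}{2}\sigma^2 x^2 w'' + (x^\alpha-\mu x)w' + \bigl[\tilde U(V') - \tilde U_L(x,V_L')\bigr].
\end{equation*}
Because $\tilde U$ is convex on $(0,\infty)$ with $\tilde U'(p) = -(U')^{-1}(p)$ (envelope theorem, using $U\in C^2$), the subgradient inequality yields $\tilde U(V')-\tilde U(V_L') \ge -(U')^{-1}(V_L')\,w'$; combining with $\tilde U(V_L') \ge \tilde U_L(x,V_L')$, one arrives at the linear elliptic inequality
\begin{equation*}
\beta w - \tfrac{1}{2}\sigma^2 x^2 w'' - \bigl(x^\alpha - \mu x - (U')^{-1}(V_L'(x))\bigr)w' \ge 0 \quad \text{on } (0,\infty).
\end{equation*}
The zero-order coefficient is $\beta\ge 0$; the leading coefficient is positive on $(0,\infty)$ and uniformly bounded away from zero on any compact subinterval; and the drift coefficient is continuous on $(0,\infty)$ since $V_L\in C^2$. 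Applying the classical strong maximum principle to $-w$ on every compact subinterval of $(0,\infty)$ containing $x_0$, $-w$ attains its nonnegative maximum $0$ at an interior point, forcing $-w\equiv 0$ locally; by connectedness of $(0,\infty)$ and continuity of $w$, this extends to $w\equiv 0$ on all of $(0,\infty)$.

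With $V\equiv V_L$, the two HJB equations force $\tilde U(V'(x)) = \tilde U_L(x,V'(x))$ for every $x>0$. Strict concavity of $U$ makes the unconstrained maximizer $y^*(V'(x)) = (U')^{-1}(V'(x))$ unique, so equality of the two suprema forces $y^*(V'(x))\le Lx$ for all $x>0$; equivalently $\hat c(x) = y^*(V'(x))/x \le L$ for all $x>0$, whence $M=\sup_x\hat c(x)\le L<\infty$, contradicting $M=\infty$. The main obstacle is the strong-maximum-principle step: one must verify the local regularity of the coefficients (in particular, that $(U')^{-1}(V_L'(x))$ is continuous, which follows from $V_L\in C^2$ and $U\in C^2$) and then propagate vanishing from $x_0$ to all of $(0,\infty)$ via the open-and-closed argument, since the standard formulation of the strong MP only applies directly on bounded subdomains.
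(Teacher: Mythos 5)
Your proof is correct, and part (ii) takes a genuinely different route from the paper's. Part (i) is the same direct admissibility argument. For part (ii), the paper and you share the same ``seed'' step — showing that $V\equiv V_L$ on all of $(0,\infty)$ is impossible because at any $x$ with $\hat c(x)>L$ the suprema $\tilde U(V'(x))$ and $\tilde U_L(x,V'(x))$ cannot agree — but you differ in how strictness is propagated from one point to every point. The paper argues probabilistically: it first produces a single $x^*$ with $V(x^*)>V_L(x^*)$, then uses the dynamic programming principle \eqref{DPP for V} together with the hitting time $\tau^*=\inf\{t\ge 0: X^x_t=x^*\}$ of the state process driven by $\hat c^L$ to push the strict gap to every initial wealth $x$. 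You instead run the contrapositive analytically: assuming $w=V-V_L$ vanishes at one interior point, you subtract the two HJB equations, exploit convexity of the Legendre transform $\tilde U$ (subgradient inequality with $\tilde U'=-(U')^{-1}$) and $\tilde U\ge\tilde U_L$ to obtain a linear elliptic inequality for $w$, and invoke the one-dimensional strong maximum principle on compact subintervals to force $w\equiv 0$. Your route is purely local and PDE-based: it needs only the $C^2$ regularity and the two equations (Theorem~\ref{thm:V=V_infty} and Proposition~\ref{prop:properties V_L}(ii)), and in particular it sidesteps the construction of $X^{\hat c^L}$ and the implicit requirement in the paper's argument that the hitting time $\tau^*$ be finite with positive probability (needed there for the strict inequality to survive the expectation). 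The paper's route, in exchange, avoids the envelope/convexity manipulation and reuses machinery (the DPP) already established. One small point to make explicit in your write-up: the drift coefficient $(U')^{-1}(V_L'(x))$ is well defined because $V_L'>0$ on $(0,\infty)$ (otherwise concavity would make $V_L$ eventually constant, contradicting its HJB equation), which is also what makes $\tilde U_L(x,V_L'(x))$ finite.
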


\begin{proof}
(i) Since $\hat c$ in \eqref{hat c} is optimal for $V$ (Theorem~\ref{thm:main}) and bounded by $M<\infty$, the definitions of $V$ and $V_L$ in \eqref{V} and \eqref{V_L} directly imply $V_L=V$ for $L\ge M$.

(ii) Fix $L>0$. First, we claim that there exists $x^*\in (0,\infty)$ with $V(x^*)> V_L(x^*)$. Suppoe $V=V_L$ on $(0,\infty)$. With $M=\infty$, we can take $x >0$ with $\hat c(x)>L$. This implies $\tilde U(V'(x)) = U(\hat c(x)x)-\hat c(x)x V'(x) > U(L x)- Lx V'(x) = \tilde U_L(x, V'(x))$. By this and Theorem~\ref{thm:V=V_infty}, 
\begin{align*}
0 &= -\beta V(x) +\frac{1}{2} \sigma ^2 x^2V''(x) + (x^\alpha -\mu x)V'(x) + \tilde U (V'(x))\\
 &> -\beta V(x) +\frac{1}{2} \sigma ^2 x^2V''(x) + (x^\alpha -\mu x)V'(x)+ \tilde U_L(x, V'(x))\\
 &= -\beta V_L(x) +\frac{1}{2} \sigma ^2 x^2V_L''(x) + (x^\alpha -\mu x)V_L'(x)+ \tilde U_L(x, V_L'(x)),  
\end{align*}
where the last line follows from $V=V_L$ on $(0,\infty)$. This, however, contradicts Proposition~\ref{prop:properties V_L} (ii). 

With $V(x^*)> V_L(x^*)$ for some $x^*>0$, we will show that $V(x) > V_L(x)$ for {\it all} $x>0$. Recall the dynamic programming principle of $V_L$ in \eqref{DPP}. By using the same arguments in Lemma~\ref{lem:viscosity V_L}, one can derive the corresponding principle for $V$, i.e. for any $x>0$,
\begin{equation}\label{DPP for V}
V(x) \ge \sup_{c\in\C} \E\left[\int_{0}^{\tau} e^{-\beta t} U(c_t X^x_t) dt + e^{-\beta \tau }V(X^x_\tau)\right],\quad \forall \tau\in \T. 
\end{equation}
Now, for any $x>0$ with $x\neq x^*$, let $X$ denote the unique strong solution to \eqref{X}, with $c_t$ replaced by $\hat c^L_t$. Consider $\tau^* := \inf\{t\ge 0 : X^x_t =x^*\}\in \T$. Thanks to \eqref{DPP for V}, 
\begin{align*}
V(x) &\ge  \E\bigg[\int_{0}^{\tau^*} e^{-\beta t} U(\hat c^L_t X^x_t) dt + e^{-\beta\tau^*} V(X^x_{\tau^*})\bigg]\\ 
&> \E\bigg[\int_{0}^{\tau^*} e^{-\beta t} U(\hat c^L_t X^x_t) dt + e^{-\beta\tau^*} V_L(X^x_{\tau^*})\bigg]\\
&\ge \E\bigg[\int_{0}^{\tau^*} e^{-\beta t} U(\hat c^L_t X^x_t) dt + e^{-\beta\tau^*} \E\bigg[\int_{\tau^*}^{\infty} e^{-\beta (t-\tau^*)} U(\hat c^L_t X^x_t) dt\ \bigg|\ \F_{\tau^*}\bigg]\bigg]\\
&= \E\bigg[\int_{0}^{\infty} e^{-\beta t} U(\hat c^L_t X^x_t) dt \bigg] = V_L(x),
\end{align*}
where the second inequality is due to $V(X^x_{\tau^*}) = V(x^*)> V_L(x^*) = V_L(X^x_{\tau^*})$, the third inequality follows from the same calculation as in \eqref{DPP <}, and the last equality holds as $\hat c^L$ is optimal for $V_L$. Hence, we conclude that $V(x) > V_L(x)$ for all $x>0$.
\end{proof}

Proposition~\ref{prop:V>V_L} provides an answer to whether ``$\hat c^L = \hat c \wedge L$'' holds. 


\begin{corollary}\label{coro:statement not true}
Suppose $\sup_{x>0} \hat c(x)=\infty$. Given $L>0$, for any $x>0$ with $\hat c(x)<L$, and any $\delta>0$, there exists $x^*>0$ such that $|x^*-x|<\delta$ and $\hat c^L(x^*) \neq \hat c(x^*) \wedge L$. Hence, for any $L>\inf_{x>0} \hat c(x)$, there exists $x^*>0$ such that $\hat c^L(x^*) \neq \hat c(x^*) \wedge L$.
\end{corollary}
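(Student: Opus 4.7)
The plan is a proof by contradiction built on matching the Legendre transforms in the HJB equations and then invoking \propref{prop:V>V_L}(ii). Fix $L>0$ and $x>0$ with $\hat c(x)<L$, and suppose toward a contradiction that there exists $\delta>0$ such that $\hat c^L(x^*) = \hat c(x^*)\wedge L$ for every $x^*\in I := (x-\delta,x+\delta)\cap(0,\infty)$. Since $V\in C^2((0,\infty))$ (by \thmref{thm:V=V_infty}) and $(U')^{-1}$ is continuous, the feedback $\hat c$ in \eqref{hat c} is continuous on $(0,\infty)$; hence, shrinking $\delta$ if necessary, we may assume $\hat c(x^*)<L$ for all $x^*\in I$. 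On $I$ this gives $\hat c(x^*)\wedge L = \hat c(x^*)<L$, so $\hat c^L(x^*)<L$, and the minimum in \eqref{c_L} defining $\hat c^L$ must be attained by its first argument. The assumed identity $\hat c^L(x^*) = \hat c(x^*)$ then reduces to $(U')^{-1}(V_L'(x^*)) = (U')^{-1}(V'(x^*))$, and since $(U')^{-1}$ is strictly decreasing, I conclude $V_L'\equiv V'$ and therefore $V_L''\equiv V''$ on $I$.

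I would then substitute this equality of derivatives into the two HJB equations: $V$ satisfies \eqref{HJB V_infty} (by \thmref{thm:V=V_infty}) while $V_L$ satisfies \eqref{HJB V_L} (by \propref{prop:properties V_L}(ii)). The key observation is that whenever $\hat c(x^*)<L$, the unconstrained maximizer of $c\mapsto U(cx^*)-cx^*V'(x^*)$ lies in $[0,L]$, and so $\tilde U_L(x^*,V'(x^*)) = \tilde U(V'(x^*))$ on $I$. Comparing the two HJB equations pointwise on $I$ therefore yields $\beta V(x^*) = \beta V_L(x^*)$, i.e.\ $V\equiv V_L$ on $I$. This contradicts \propref{prop:V>V_L}(ii), which under the standing hypothesis $\sup_{x>0}\hat c(x)=\infty$ asserts $V_L(x^*)<V(x^*)$ for every $x^*>0$.

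For the final sentence of the statement, if $L>\inf_{x>0}\hat c(x)$, pick any $x_0>0$ with $\hat c(x_0)<L$ and apply the first part with any $\delta>0$ to produce the required $x^*$. The only subtle step in the whole argument is the matching of the Legendre transforms $\tilde U$ and $\tilde U_L$ at $V'(x^*)$, which relies precisely on the strict inequality $\hat c(x^*)<L$ secured by continuity; apart from this, the proof rests on injectivity of $(U')^{-1}$ and the already-established strict gap $V_L<V$. I do not expect a significant obstacle beyond keeping track of the neighborhood on which $\hat c$ stays strictly below $L$.
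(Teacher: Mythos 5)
Your proposal is correct and follows essentially the same route as the paper: assume $\hat c^L = \hat c\wedge L$ on a neighborhood where (by continuity) $\hat c<L$, deduce $V_L'=V'$ and $V_L''=V''$ there, match the Hamiltonians ($\tilde U_L(x,V'(x))=\tilde U(V'(x))$ since the unconstrained maximizer stays below $L$), conclude $V=V_L$ on the neighborhood, and contradict Proposition~\ref{prop:V>V_L}(ii). The paper phrases the Hamiltonian matching by plugging the explicit optimizers $\hat c$ and $\hat c^L$ into the two HJB equations, but this is the same computation.
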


\begin{proof}
Take $L>0$ such that there exists $x>0$ with $\hat c(x)<L$. For any $\delta>0$, by the continuity of $\hat c$, there exists $0<\delta'\le \delta$ such that $\hat c(y)<L$ for all $y\in (x-\delta',x+\delta')$. We claim that there exists $y^*\in (x-\delta',x+\delta')$ such that $\hat c^L(y^*) \neq \hat c(y^*)\wedge L$. By contradiction, suppose $\hat c^L = \hat c\wedge L$ on $(x-\delta',x+\delta')$. It follows that $\hat c^L = \hat c$ on $(x-\delta',x+\delta')$. By \eqref{hat c} and \eqref{c_L}, this implies $V_L'= V'$ on $(x-\delta',x+\delta')$, which in turn entails $V_L''= V''$ on $(x-\delta',x+\delta')$. Hence, for any $y\in (x-\delta',x+\delta')$,
\begin{align*}
\beta V(y)  &=  \frac{1}{2} \sigma ^2 y^2V''(y) + (y^\alpha -\mu y)V'(y) + U(\hat c(y)y)-\hat c(y)y V'(y)\\
&=   \frac{1}{2} \sigma ^2 y^2V_L''(y) + (y^\alpha -\mu y)V_L'(y) + U(\hat c^L(y)y)-\hat c^L(y)y V_L'(y) = \beta V_L(y),
\end{align*}
where the first and the last equalities follows from Theorem~\ref{thm:V=V_infty} and Proposition~\ref{prop:properties V_L}. This implies $V=V_L$ on $(x-\delta',x+\delta')$, a contradiction to Proposition~\ref{prop:V>V_L} (ii).
\end{proof}

To concretely illustrate the above results, in the following we focus on the utility function 
\begin{equation}\label{power U}
U(x) := \frac{x^{1-\gamma}}{1-\gamma}\quad \hbox{for}\ x>0,\qquad \hbox{with $0<\gamma <1$}.  
\end{equation}

\begin{lemma}
Assume \eqref{power U}. Then, there exist $C_1, C_2>0$ such that
\begin{equation}\label{V at infty}
C_1 x^{1-\gamma} \le V(x)\quad \forall x>0\qquad \hbox{and}\qquad   V(x)\leq C_2 (1+x^{1-\gamma})\quad \hbox{as $x\to\infty$}.
\end{equation}
In particular, we have
\begin{equation}\label{limsup at infty}
\lim_{x\to\infty}  x^\gamma V'(x)   =\left( \frac{\gamma}{\beta  + \mu (1-\gamma) + \frac{1}{2} \sigma ^2 \gamma(1-\gamma)}\right)^\gamma>0.
\end{equation}
\end{lemma}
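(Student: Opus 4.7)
The plan is to prove the two growth bounds in \eqref{V at infty} first, then extract \eqref{limsup at infty} by combining them with the concavity of $V$. Throughout, set $B := \beta + \mu(1-\gamma) + \tfrac{1}{2}\sigma^2\gamma(1-\gamma)$, $M^* := (\gamma/B)^\gamma$, and $H^* := M^*/(1-\gamma)$, so that the targeted limit is $M^*$.

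For the lower bound, I would plug the (production-free Merton-optimal) constant consumption $c_t\equiv B/\gamma$ into \eqref{V} and compute explicitly. From the representation in \eqref{Z formula}, $X_t = G_t^{-\eta}(x^{1-\alpha}+(1-\alpha)\int_0^t G_s\,ds)^\eta \ge x\,G_t^{-\eta}$, hence
\[
\E\!\left[\int_0^\infty e^{-\beta t} X_t^{1-\gamma}\,dt\right] \;\ge\; x^{1-\gamma}\int_0^\infty e^{-\beta t}\,\E[G_t^{-\eta(1-\gamma)}]\,dt.
\]
A log-normal moment calculation, exploiting $\eta(1-\alpha)=1$ to produce clean cancellations, gives $\E[G_t^{-\eta(1-\gamma)}]=e^{(\beta-B/\gamma)t}$, so the integral on the right equals $\gamma/B$. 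Multiplying by $U(B/\gamma) = (B/\gamma)^{1-\gamma}/(1-\gamma)$ then yields $V(x)\ge H^*\,x^{1-\gamma}$, i.e.\ the first part of \eqref{V at infty} with $C_1 := H^*$.

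For the asymptotic upper bound, I would show that for any $b > H^*$ the function $W(x) := a + b\,x^{1-\gamma}$ is a classical supersolution of \eqref{HJB V_infty} on $[x_0,\infty)$ for suitable $x_0 = x_0(b)$ and $a = a(b)\ge 0$. A direct computation gives
\[
\beta W - \tfrac{1}{2}\sigma^2 x^2 W'' - (x^\alpha-\mu x)W' - \tilde U(W') \;=\; \beta a + I(b)\,x^{1-\gamma} - b(1-\gamma)\,x^{\alpha-\gamma},
\]
where $I(b) := bB - \tfrac{\gamma}{1-\gamma}(b(1-\gamma))^{(\gamma-1)/\gamma}$ vanishes at $b=H^*$ and is strictly increasing in $b$. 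Since $\alpha-\gamma<1-\gamma$, the $x^{1-\gamma}$ term dominates $x^{\alpha-\gamma}$ for $x$ large, yielding the supersolution inequality on $[x_0,\infty)$. I would then run a verification: for any $c\in\C$ and $x\ge x_0$, apply It\^o to $e^{-\beta(t\wedge\tau)}W(X_{t\wedge\tau})$ with $\tau:=\inf\{t\ge 0 : X_t\le x_0\}$, send $T\to\infty$ using Proposition~\ref{prop:estimates X} and $\beta>0$ to kill $\E[e^{-\beta T}W(X_T)]$ and justify the martingale part, and bound the continuation after $\tau$ by $\E[e^{-\beta\tau}V(x_0)]$ via the Markov structure and the definition of $V$. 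Choosing $a\ge V(x_0) - b\,x_0^{1-\gamma}$ then yields $V(x)\le W(x)$ for $x\ge x_0$, which gives the second part of \eqref{V at infty} with $C_2:=\max(a,b)$; letting $b\downarrow H^*$ additionally shows $\limsup_{x\to\infty} V(x)/x^{1-\gamma}\le H^*$.

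Finally, for \eqref{limsup at infty}, the two bounds combined yield $\lim_{x\to\infty}V(x)/x^{1-\gamma} = H^*$, which I would transfer to $V'$ via the concavity of $V$ (Theorem~\ref{thm:V=V_infty}). For $t>1$ fixed, concavity gives $V'(tx)(t-1)x \le V(tx)-V(x) \le V'(x)(t-1)x$; dividing by $(t-1)x^{1-\gamma}$ and sending $x\to\infty$,
\[
t^{-\gamma}\limsup_{y\to\infty} y^\gamma V'(y) \;\le\; H^*\cdot\frac{t^{1-\gamma}-1}{t-1} \;\le\; \liminf_{x\to\infty} x^\gamma V'(x).
\]
Letting $t\downarrow 1$ and using $(t^{1-\gamma}-1)/(t-1)\to 1-\gamma$ collapses both $\limsup$ and $\liminf$ of $x^\gamma V'(x)$ to $(1-\gamma)H^* = M^* = (\gamma/B)^\gamma$, which is \eqref{limsup at infty}. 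The main technical obstacle will be the upper bound: $W$ is \emph{not} a global supersolution of \eqref{HJB V_infty} (near $x=0$ the $x^{\alpha-\gamma}$ term can blow up against the fixed $\beta W$), so the verification must be localized via $\tau$ and then closed using the Markov bound at $X_\tau=x_0$; justifying the passage $T\to\infty$ for both the boundary term $\E[e^{-\beta T}W(X_T)]$ and the martingale part of It\^o's formula relies crucially on the moment estimates of Proposition~\ref{prop:estimates X}.
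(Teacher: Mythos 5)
Your proposal is correct, but it reaches \eqref{V at infty} and \eqref{limsup at infty} by a genuinely different route than the paper. For the lower bound the paper uses the same pointwise inequality $X_t\ge x\,G_t^{-\eta}$ but with the cruder choice $\bar c\equiv 1$, settling for an unspecified positive constant $C_1$; your choice $c\equiv B/\gamma$ costs nothing extra and produces the sharp constant $H^*$. For the upper bound the paper stays entirely at the ODE level: from \eqref{V under U}, concavity ($V''\le 0$), $x^\alpha-\mu x<0$ for large $x$, and $V'(x)\le V(x)/x$, it extracts $V'(x)\le\big(\tfrac{\gamma}{\beta(1-\gamma)}\big)^\gamma x^{-\gamma}$ and integrates --- much shorter than your supersolution-plus-localized-verification argument, but with a non-matching constant. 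Because its two bounds do not match, the paper must then identify $\lim_{x\to\infty}V(x)/x^{1-\gamma}$ by passing to a convergent subsequence, applying L'Hospital-type identities to obtain $\lim_n x_n^\gamma V'(x_n)$ and $\lim_n x_n^{1+\gamma}V''(x_n)$, and solving the resulting algebraic equation from the HJB equation; your matched bounds make $\lim_{x\to\infty}V(x)/x^{1-\gamma}=H^*$ immediate, and your concavity sandwich on the difference quotient transfers the limit to $V'$ without touching $V''$. Your final step is in fact more robust: invoking L'Hospital along a subsequence on which only the ratio of the functions is known to converge runs the rule in the wrong direction, whereas your two-sided estimate uses nothing beyond concavity. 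The price is the technical work in the verification (localization at $\tau$, the dynamic-programming inequality at $X_\tau=x_0$, and the $T\to\infty$ limits), all of which is indeed supported by Proposition~\ref{prop:estimates X} and the inequality \eqref{DPP for V}, so the plan closes.
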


\begin{proof}
Consider the constant consumption process $\bar c_t\equiv 1$. For any $x>0$, let $X$ denote the unique strong solution to \eqref{X} with $c = \bar c$. By the definition of $V$ and \eqref{power U}, 
\[
V(x) \ge \frac{1}{1-\gamma}\E\bigg[\int_0^\infty e^{-\beta t} (X^x_t)^{1-\gamma} dt\bigg].
\] 
Recall from Section~\ref{sec:X} that $X_t = (Z_t)^{1/(1-\alpha)}$, with $Z$ explicitly given in \eqref{Z formula}. It follows that
\begin{align*}
V(x) &\ge \frac{1}{1-\gamma}\E\bigg[\int_0^\infty e^{-\beta t} \left(G_t^{-1} \left(x^{1-\alpha} + (1-\alpha)\int_0^t G_s ds \right)\right)^{\frac{1-\gamma}{1-\alpha}} dt \bigg]\\
& \ge \frac{1}{1-\gamma}\E\bigg[\int_0^\infty e^{-\beta t} \left(G_t^{-1} x^{1-\alpha}\right)^{\frac{1-\gamma}{1-\alpha}} dt \bigg]= \frac{x^{1-\gamma}}{1-\gamma} \E\bigg[\int_0^\infty e^{-\beta t} G_t^{\frac{\gamma-1}{1-\alpha}} dt\bigg],
\end{align*}
where $G$ is defined as in \eqref{G}, with $c_t=\bar c_t\equiv 1$, and the second inequality follows from $G_t>0$ for all $t\ge 0$, $1-\alpha>0$, and $\frac{1-\gamma}{1-\alpha}>0$. Noting that the process $G$ is independent of $x$, we conclude from the above inequality that the first part of \eqref{V at infty} holds. 

By Theorem~\ref{thm:V=V_infty} and \eqref{power U}, $V$ satisfies 
\begin{equation}\label{V under U}
\beta V(x)  =  \frac{1}{2} \sigma ^2 x^2V''(x)+ (x^\alpha -\mu x)V'(x) + \frac{\gamma}{1-\gamma} \left(V'(x)\right)^\frac{\gamma-1}{\gamma},\quad \forall x>0. 
\end{equation}
Recall from Theorem~\ref{thm:V=V_infty} that $V'(x)>0$ and $V''(x)\le 0$ for all $x>0$. Also, by the standing assumption $\mu>0$ in \eqref{mu>0}, $x^\alpha -\mu x<0$ for $x>0$ large enough. Hence, \eqref{V under U} implies the existence of $x_0>0$ such that
\[
\beta V(x)  \leq \frac{\gamma}{1-\gamma} \left(V'(x)\right)^\frac{\gamma-1}{\gamma},\quad \hbox{for $x\ge x_0$}. 
\]
Note that $V$ being nonnegative, concave, and nondecreasing entails $ V'(x) \le \frac{V(x)}{x}$ for all $x>0$. The above inequality then yields $\beta x V'(x)\le \frac{\gamma}{1-\gamma} \left(V'(x)\right)^\frac{\gamma-1}{\gamma}$ for $x\ge x_0$, which is equivalent to
\[
V'(x) \leq \left(\frac{\gamma}{\beta(1-\gamma)}\right)^\gamma x^{-\gamma},\quad \hbox{for $x\ge x_0$}. 
\]
Integrating both sides from $x_0$ to $x\ge x_0$ gives 
\[
V(x) \le V(x_0) + \left(\frac{\gamma}{\beta}\right)^\gamma \left(\frac{1}{1-\gamma}\right)^{\gamma+1} (x^{1-\gamma}-x_0^{1-\gamma}),\quad \hbox{for $x\ge x_0$}. 
\]
This shows that the second part of \eqref{V at infty} is true. 

By \eqref{V at infty}, $0<\liminf_{x\to\infty}\frac{V(x)}{x^{1-\gamma}} \le \limsup_{x\to\infty}\frac{V(x)}{x^{1-\gamma}} <\infty$. 
Hence, for any $\{x_n\}_{n\in\N}$ in $(0,\infty)$ such that $x_n\to\infty$ and $\frac{V(x_n)}{x_n^{1-\gamma}}$ converges, we must have $\lim_{n\to\infty}\frac{V(x_n)}{x_n^{1-\gamma}}=c$ for some $0<c<\infty$. Taking $x=x_n$ in \eqref{V under U} and dividing the equation by $x_n^{1-\gamma}$, we get 
\[
\beta \frac{V(x_n)}{x_n^{1-\gamma}}  =  \frac{1}{2} \sigma^2 x_n^{1+\gamma}V''(x_n)+ (x_n^{\alpha-1} -\mu)x_n^\gamma V'(x_n) + \frac{\gamma}{1-\gamma} \left(x_n^\gamma V'(x_n)\right)^\frac{\gamma-1}{\gamma},\quad \forall n\in\N. 
\]
With $c = \lim_{n \to \infty} \frac{V(x_n)}{x_n^{1-\gamma}}$, L'Hospital's rule implies $c(1-\gamma) = \lim_{n \to \infty} x_n^\gamma V'(x_n)$. Using L'Hospital's rule again yields $-c\gamma(1-\gamma) = \lim_{n \to \infty} x_n^{\gamma+1} V''(x_n)$. Thus, as $n\to\infty$, the above equation gives 
\begin{equation*}
\beta c = - \frac{1}{2} \sigma ^2 c\gamma(1-\gamma)-\mu c(1-\gamma) + \frac{\gamma}{1-\gamma}  \left(c(1-\gamma)\right)^\frac{\gamma-1}{\gamma},
\end{equation*}
which has a unique solution 
$
c = \frac{1}{1-\gamma}  \big( \frac{\gamma}{\beta  + \mu (1-\gamma) + \frac{1}{2} \sigma ^2 \gamma(1-\gamma)}\big)^\gamma>0.
$
With $\{x_n\}_{n\in\N}$ arbitrarily chosen, $\lim_{x\to \infty} \frac{V(x)}{x^{1-\gamma}}$ exists and must equal $c$ as above.  L'Hospital's rule then gives the result \eqref{limsup at infty}.
\end{proof}

\begin{proposition}\label{prop:hat c}
Assume \eqref{power U}. Then, 
\[
\lim_{x\to\infty} \hat c(x) =  \frac{\beta}{\gamma}+(1-\gamma)\bigg(\frac{\mu}{\gamma}+\frac{\sigma^2}{2}\bigg). 
\]
Moreover,
\begin{equation}\label{hat c at 0}
\lim_{x\downarrow 0} \hat c(x) =
\begin{cases}
0,\quad &\hbox{if}\ \gamma<\alpha;\\
\left(\beta V(0+)\right)^{-1/\gamma}>0,\quad &\hbox{if}\ \gamma=\alpha;\\
\infty,\quad &\hbox{if}\ \gamma>\alpha.
\end{cases}
\end{equation}
\end{proposition}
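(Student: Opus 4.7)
The key observation is that under the power utility \eqref{power U}, the map $(U')^{-1}$ takes a completely explicit form: since $U'(x) = x^{-\gamma}$, we have $(U')^{-1}(y) = y^{-1/\gamma}$, so that by the definition \eqref{hat c} of $\hat c$,
\[
\hat c(x) = \frac{V'(x)^{-1/\gamma}}{x}, \qquad x>0.
\]
Hence both limits reduce to asymptotic statements about $V'$, which are already contained in the immediately preceding material.

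For the limit at infinity, I would substitute the asymptotic relation \eqref{limsup at infty}, namely $\lim_{x\to\infty} x^\gamma V'(x) = K^\gamma$, where $K := \gamma / \bigl(\beta + \mu(1-\gamma) + \tfrac{1}{2}\sigma^2\gamma(1-\gamma)\bigr) > 0$. Writing
\[
\hat c(x) = \frac{(x^\gamma V'(x))^{-1/\gamma}\, x}{x} = (x^\gamma V'(x))^{-1/\gamma},
\]
we see $\hat c(x) \to K^{-1}$ as $x\to\infty$, and a direct algebraic rearrangement of $K^{-1}$ produces exactly $\frac{\beta}{\gamma} + (1-\gamma)\bigl(\frac{\mu}{\gamma} + \frac{\sigma^2}{2}\bigr)$, as required.

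For the limit at the origin, I would use Lemma~\ref{lem:V at 0}(ii), which gives $\lim_{x\downarrow 0} x^\alpha V'(x) = \beta V(0+) > 0$. Writing
\[
\hat c(x) = \frac{(x^\alpha V'(x))^{-1/\gamma}\, x^{\alpha/\gamma}}{x} = (x^\alpha V'(x))^{-1/\gamma}\, x^{\alpha/\gamma - 1},
\]
the prefactor converges to $(\beta V(0+))^{-1/\gamma} \in (0,\infty)$, while the behaviour of the power $x^{\alpha/\gamma - 1}$ as $x\downarrow 0$ produces the three-way split in \eqref{hat c at 0}: the exponent is positive when $\gamma<\alpha$ (giving $0$), zero when $\gamma=\alpha$ (giving $(\beta V(0+))^{-1/\gamma}$), and negative when $\gamma>\alpha$ (giving $\infty$). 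There is no genuine obstacle here, since all of the analytical work has already been carried out in Lemma~\ref{lem:V at 0} and in the preceding asymptotic lemma; the present proposition is essentially a clean substitution, with the mild point of care being to keep signs and exponents straight in the three subcases at the origin.
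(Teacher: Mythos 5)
Your proposal is correct and follows essentially the same route as the paper: write $\hat c(x)=(V'(x))^{-1/\gamma}/x$ using the explicit form of $(U')^{-1}$, then substitute the asymptotics $\lim_{x\to\infty}x^\gamma V'(x)$ from \eqref{limsup at infty} and $\lim_{x\downarrow 0}x^\alpha V'(x)=\beta V(0+)$ from Lemma~\ref{lem:V at 0}(ii). The algebra in both limits, including the three-way case split via the exponent $\alpha/\gamma-1$, matches the paper's argument exactly.
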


\begin{proof}
Under \eqref{power U}, $\hat c(x) = \frac{(V'(x))^{-1/\gamma}}{x}$. It follows that
\[
\lim_{x\to\infty} \hat c(x) = \lim_{x\to\infty} \left(x^\gamma V'(x)\right)^{-\frac{1}{\gamma}}=\frac{\beta}{\gamma}+(1-\gamma)\bigg(\frac{\mu}{\gamma}+\frac{\sigma^2}{2}\bigg),
\]
where the second equality follows from \eqref{limsup at infty}. On the other hand, by Lemma~\ref{lem:V at 0} (ii), 
\[
\lim_{x\downarrow 0} \hat c(x) = \lim_{x\downarrow 0}  \frac{(V'(x))^{-1/\gamma}}{x} = \lim_{x\downarrow 0}  \frac{(\beta V(0+) x^{-\alpha})^{-1/\gamma}}{x} = \left(\beta V(0+)\right)^{-1/\gamma} \lim_{x\downarrow 0} x^{\frac{\alpha}{\gamma}-1},
\]
which directly implies \eqref{hat c at 0}.
\end{proof}

Proposition~\ref{prop:hat c} admits interesting economic interpretation. An agent's consumption behavior is determined by two competing effects, captured by the parameters $\gamma$ and $\alpha$ respectively. First, as in the literature of mathematical finance, $\gamma$ in \eqref{power U} measures the agent's risk aversion: the larger $\gamma$, the stronger the agent's intention to consume capital right away (to get immediate, riskless utility), as opposed to saving capital in the form of $X$, subject to risky, stochastic evolution. On the other hand, $\alpha$ in \eqref{X} measures how efficient capital is used in an economy to produce new capital: the larger $\alpha$, the stronger the upward potential of $X$, and thus the more willing the agent to save capital (i.e. consume less). Now, as in \eqref{hat c at 0}, when capital per capita $X$ dwindles near 0, (i) if risk aversion of the agent is not so strong relative to the efficiency of capital production (i.e. $\gamma<\alpha$), the effect of $\alpha$ prevails, so that the agent (in the limit) saves all capital to fully exploit the upward potential of $X$; (ii) if risk aversion of the agent is very strong relative to the efficiency of capital production (i.e. $\gamma>\alpha$), the effect of $\gamma$ prevails, so that the agent consumes capital as fast as possible, to reduce risky position in $X$; (iii) if risk aversion of the agent is comparable to the efficiency of capital production (i.e. $\gamma=\alpha$), the effects of $\alpha$ and $\gamma$ are balanced, leading to bounded, positive consumption of the agent. 

\begin{corollary}\label{coro:power U results}
Assume \eqref{power U}. If $\gamma\le \alpha$, as long as $L>0$ is large enough, $V_L(x) = V(x)$ for all $x>0$. If $\gamma>\alpha$, then for any $L>0$, $V_L(x)< V(x) $ for all $x>0$; moreover, for $L>\frac{\beta}{\gamma}+(1-\gamma)\big(\frac{\mu}{\gamma}+\frac{\sigma^2}{2}\big)$, there exists $x^*>0$ such that $\hat c^L(x^*) \neq \hat c(x^*) \wedge L$.
\end{corollary}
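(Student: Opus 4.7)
The plan is to apply Proposition~\ref{prop:V>V_L} and Corollary~\ref{coro:statement not true}, using the asymptotic formulas in Proposition~\ref{prop:hat c} to locate $M := \sup_{x>0}\hat c(x)$ and to bound $\inf_{x>0}\hat c(x)$ from above. First, observe that $\hat c(x) = (U')^{-1}(V'(x))/x$ is continuous on $(0,\infty)$: by Theorem~\ref{thm:V=V_infty}, $V\in C^2((0,\infty))$ with $V'>0$, while $U\in C^2((0,\infty))$ ensures that $(U')^{-1}$ is continuous on $(0,\infty)$.

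When $\gamma\le\alpha$, Proposition~\ref{prop:hat c} shows that both $\lim_{x\downarrow 0}\hat c(x)$ and $\lim_{x\to\infty}\hat c(x)$ are finite. Combined with continuity of $\hat c$ on $(0,\infty)$, this forces $M<\infty$. Proposition~\ref{prop:V>V_L}(i) then yields $V_L(x)=V(x)$ for all $x>0$ whenever $L\ge M$, which proves the first assertion.

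When $\gamma>\alpha$, Proposition~\ref{prop:hat c} gives $\lim_{x\downarrow 0}\hat c(x)=\infty$, hence $M=\infty$, and Proposition~\ref{prop:V>V_L}(ii) immediately delivers $V_L(x)<V(x)$ for all $x>0$ and all $L>0$. For the final claim, set
\[
\kappa := \frac{\beta}{\gamma}+(1-\gamma)\bigl(\tfrac{\mu}{\gamma}+\tfrac{\sigma^2}{2}\bigr).
\]
Since $\lim_{x\to\infty}\hat c(x)=\kappa$ by Proposition~\ref{prop:hat c}, we have $\inf_{x>0}\hat c(x)\le\kappa$, so any $L>\kappa$ satisfies $L>\inf_{x>0}\hat c(x)$, and Corollary~\ref{coro:statement not true} supplies some $x^*>0$ with $\hat c^L(x^*)\neq\hat c(x^*)\wedge L$.

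There is no serious obstacle: all the nontrivial work was carried out in Propositions~\ref{prop:V>V_L} and \ref{prop:hat c} and Corollary~\ref{coro:statement not true}. The only minor subtlety is ruling out, in the regime $\gamma\le\alpha$, that $\hat c$ blows up somewhere in the interior of $(0,\infty)$; this is automatic from continuity together with finiteness of the two endpoint limits. The remainder is a clean bookkeeping exercise combining the abstract results with the explicit asymptotics provided by the isoelastic utility.
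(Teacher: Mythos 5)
Your proof is correct and follows essentially the same route as the paper: continuity of $\hat c$ together with the endpoint limits from Proposition~\ref{prop:hat c} gives boundedness of $\hat c$ exactly when $\gamma\le\alpha$ and the bound $\inf_{x>0}\hat c(x)\le \frac{\beta}{\gamma}+(1-\gamma)\big(\frac{\mu}{\gamma}+\frac{\sigma^2}{2}\big)$, after which Proposition~\ref{prop:V>V_L} and Corollary~\ref{coro:statement not true} finish the argument. The only difference is that you spell out the endpoint-limit reasoning that the paper leaves implicit.
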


\begin{proof}
Since $\hat c(x)$ is by definition continuous on $(0,\infty)$, Proposition~\ref{prop:hat c} implies (i) $\hat c(x)$ is bounded on $(0,\infty)$ if and only if $\gamma\le \alpha$, and (ii) $\inf_{x>0}\hat c(x)\le \frac{\beta}{\gamma}+(1-\gamma)\big(\frac{\mu}{\gamma}+\frac{\sigma^2}{2}\big)$. The result then follows from Proposition~\ref{prop:V>V_L} and Corollary~\ref{coro:statement not true}. 
\end{proof} 

The next two results focus on the specific case $\gamma=\alpha$. The purpose is twofold. First, we demonstrate that the value function $V$ and optimal consumption $\hat c$ can be solved explicitly. Second, as we will see, $\hat c$ is constant (and thus bounded), so that Corollary~\ref{coro:statement not true} is inconclusive on the failure of ``$\hat c^L = \hat c\wedge L$''. Explicit calculation shows that ``$\hat c^L = \hat c\wedge L$'' holds for some, but not all, $L>0$.  

\begin{proposition}\label{prop:explicit case}
Assume \eqref{power U} with $\gamma= \alpha$. Then, $V(x)=\zeta\cdot \left(\frac{x^{1-\alpha}}{1-\alpha} + \frac{1}{\beta}\right)$, with 
\begin{align}\label{zeta}
\zeta &:= \left( \frac{\alpha}{\beta +\mu (1-\alpha)+ \frac{1}{2} \sigma ^2\alpha  (1-\alpha)}\right)^\alpha>0. 
\end{align}
Moreover, the optimal consumption \eqref{hat c} is a constant process given by 
\begin{equation}\label{constant c}
\hat c_t \equiv \frac{\beta}{\alpha}+(1-\alpha)\bigg(\frac{\mu}{\alpha}+\frac{\sigma^2}{2}\bigg)>0.
\end{equation}
\end{proposition}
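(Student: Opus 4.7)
The plan is to apply the uniqueness part of Theorem~\ref{thm:main}: guess the explicit form of $V$, verify membership in $\mathcal{U}$ and the HJB equation \eqref{HJB V_infty}, and then read off $\hat{c}$ from \eqref{hat c}.

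First I would compute the dual $\tilde{U}$ for $U(x)=x^{1-\alpha}/(1-\alpha)$. Since $U'(x)=x^{-\alpha}$ and $(U')^{-1}(p)=p^{-1/\alpha}$, a direct calculation gives
\[
\tilde{U}(p) = U(p^{-1/\alpha}) - p\cdot p^{-1/\alpha} = \frac{\alpha}{1-\alpha}\, p^{(\alpha-1)/\alpha},\qquad p>0.
\]
Then I would introduce the candidate $u(x):=\zeta\bigl(\frac{x^{1-\alpha}}{1-\alpha}+\frac{1}{\beta}\bigr)$, whose derivatives are $u'(x)=\zeta x^{-\alpha}$ and $u''(x)=-\alpha\zeta x^{-\alpha-1}$. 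Plugging these into the right-hand side of \eqref{HJB V_infty}, the constant $\zeta$ cancels with the $\zeta$ coming from $(x^\alpha-\mu x)u'(x)$, and grouping the $x^{1-\alpha}$ terms reduces the equation to
\[
\frac{\beta\zeta}{1-\alpha} = -\tfrac{1}{2}\sigma^2\alpha\zeta - \mu\zeta + \frac{\alpha}{1-\alpha}\zeta^{(\alpha-1)/\alpha}.
\]
Solving for $\zeta^{1/\alpha}$ yields precisely the expression in \eqref{zeta}.

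Next I would check $u\in\mathcal{U}$: $u\in C^2((0,\infty))\cap C([0,\infty))$ with $u(0)=\zeta/\beta>0$, strictly increasing since $u'>0$, concave since $u''<0$, and of linear growth because $x^{1-\alpha}\le 1+x$ for $x\ge 0$. Condition \eqref{C*/x^alpha'} also holds, since
\[
\frac{(U')^{-1}(u'(x))}{x^\alpha} = \frac{(\zeta x^{-\alpha})^{-1/\alpha}}{x^\alpha} = \zeta^{-1/\alpha}\, x^{1-\alpha} \longrightarrow 0\ \hbox{as}\ x\downarrow 0.
\]
Therefore by the uniqueness statement in Theorem~\ref{thm:main}, $V=u$.

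Finally, for the optimal consumption I would compute directly from \eqref{hat c}: $\hat c(x) = (V'(x))^{-1/\alpha}/x = (\zeta x^{-\alpha})^{-1/\alpha}/x = \zeta^{-1/\alpha}$, which is constant in $x$. Substituting the expression for $\zeta$ from \eqref{zeta} gives
\[
\hat{c}(x) \equiv \zeta^{-1/\alpha} = \frac{\beta}{\alpha}+(1-\alpha)\Bigl(\frac{\mu}{\alpha}+\frac{\sigma^2}{2}\Bigr),
\]
matching \eqref{constant c}. Since $\hat{c}_t=\hat{c}(X_t)$ is then a deterministic constant process, no subtlety arises in evaluating it along $X^{\hat c}$. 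There is no real obstacle here beyond bookkeeping; the verification framework of Theorem~\ref{thm:main} does all the heavy lifting, and the only delicate point is making sure the ansatz $u$ is the correct form to absorb both the $x^{1-\alpha}$ growth and the constant term generated by the $x^\alpha u'(x)$ contribution.
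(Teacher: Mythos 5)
Your proposal is correct and follows essentially the same route as the paper: plug the ansatz $v(x)=a x^{1-\alpha}+b$ into the HJB equation \eqref{HJB V_infty}, match the $x^{1-\alpha}$ and constant terms to identify $\zeta$, verify membership in $\mathcal{U}$ (including \eqref{C*/x^alpha'}), and invoke the uniqueness part of Theorem~\ref{thm:main} to conclude $V=v$ and read off the constant $\hat c \equiv \zeta^{-1/\alpha}$. All computations check out.
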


\begin{proof}
By Theorem~\ref{thm:V=V_infty}, \eqref{power U}, and $\gamma=\alpha$, $V$ is a classical solution to 
\begin{equation*}
\beta v(x)  =  \frac{1}{2} \sigma ^2 x^2v''(x)+ (x^\alpha -\mu x)v'(x) + \frac{\alpha}{1-\alpha} \left(v'(x)\right)^\frac{\alpha-1}{\alpha},\quad \forall x>0. 
\end{equation*}
We plug the ansatz $v(x) = a x^{1-\alpha} + b$, for some $a, b\in \R$, in the above equation. Equating the $x^{1-\alpha}$ terms on both sides leads to 
\[
\beta a= \frac{1}{2} \sigma ^2 a(1-\alpha)(-\alpha)-\mu a(1-\alpha)+ \frac{\alpha}{(1-\alpha)^\frac{1}{\alpha}} {a^\frac{\alpha-1}{\alpha}},
\]
which implies $a = \frac{\zeta}{1-\alpha}$, with $\zeta$ as in \eqref{zeta}. 
Similarly, equating the constant terms on both sides yields $\beta b = a(1-\alpha)$, which implies $b=\frac{\zeta}{\beta}$, with $\zeta$ as in \eqref{zeta}.
By construction, $v(x) = \zeta\cdot \big(\frac{x^{1-\alpha}}{1-\alpha} + \frac{1}{\beta}\big)$ is nonnegative, concave, strictly increasing, and satisfies the linear growth condition \eqref{linear growth}. Moreover, 
\[
\lim_{x \downarrow 0} \frac{(U')^{-1} (v'(x))}{x^\alpha} 
= \lim_{x \downarrow 0} \frac{(\zeta x^{-\alpha})^{-1/\alpha}}{x^\alpha}=\lim_{x \downarrow 0} \zeta^{-1/\alpha}  x^{1-\alpha}= 0,
\]
i.e. \eqref{C*/x^alpha'} is satisfied. Hence, we conclude from Theorem~\ref{thm:main} that $V(x)=v(x)$ for all $x>0$, and the optimal consumption process $\hat c$ is given by
\[
\hat c_t = \frac{(U')^{-1} (v'(X_t))}{X_t} = \frac{(\zeta X_t^{-\alpha})^{-1/\alpha}}{X_t} = \zeta^{-1/\alpha}=\frac{\beta}{\alpha}+(1-\alpha)\bigg(\frac{\mu}{\alpha}+\frac{\sigma^2}{2}\bigg),\quad \forall t\ge 0.  
\]
\end{proof}

The constant consumption \eqref{constant c} turns out to be the threshold, uniform in $x>0$, for ``$\hat c^L= \hat c \wedge L$'' to hold. 

\begin{proposition}
Assume \eqref{power U} with $\gamma= \alpha$.  Then, $\hat c^L(x) = \hat c (x) \wedge L$ for all $x>0$ if and only if $L\ge \frac{\beta}{\alpha}+(1-\alpha)\big(\frac{\mu}{\alpha}+\frac{\sigma^2}{2}\big)$. 
\end{proposition}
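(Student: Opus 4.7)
By Proposition~\ref{prop:explicit case}, under $\gamma=\alpha$ the optimal feedback collapses to the positive constant $c^\ast:=\frac{\beta}{\alpha}+(1-\alpha)\bigl(\frac{\mu}{\alpha}+\frac{\sigma^2}{2}\bigr)$, so that $\hat c(x)\equiv c^\ast$ and $\hat c(x)\wedge L\equiv c^\ast\wedge L$ is constant in $x$. The plan is therefore to handle the two directions of the biconditional separately, leveraging the explicit formulas available in this closed-form case.

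For the sufficiency direction, suppose $L\ge c^\ast$. Since $M:=\sup_{x>0}\hat c(x)=c^\ast\le L$, Proposition~\ref{prop:V>V_L}(i) applies and yields $V_L\equiv V$ on $(0,\infty)$. Hence $V_L'\equiv V'$, and combining \eqref{hat c} with \eqref{c_L} gives
\[
\hat c^L(x)=\min\{(U')^{-1}(V'(x))/x,\;L\}=\min\{c^\ast,L\}=c^\ast=\hat c(x)\wedge L.
\]

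For the necessity direction, I would assume $L<c^\ast$ (so $\hat c\wedge L\equiv L$) and argue by contradiction that $\hat c^L\equiv L$ on $(0,\infty)$ is untenable. Under the binding-everywhere assumption the cap $L$ is pointwise optimal in the HJB \eqref{HJB V_L}, so $\tilde U_L(x,V_L'(x))=\frac{(Lx)^{1-\alpha}}{1-\alpha}-LxV_L'(x)$ and \eqref{HJB V_L} reduces to the linear ODE
\[
\beta v=\tfrac{1}{2}\sigma^2x^2v''+\bigl(x^\alpha-(\mu+L)x\bigr)v'+\tfrac{(Lx)^{1-\alpha}}{1-\alpha}.
\]
A power ansatz $v(x)=Ax^{1-\alpha}+B$ then yields, by coefficient matching, explicit $A(L),B(L)>0$. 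Since the resulting $v$ is nonnegative, concave, strictly increasing, and of linear growth, it belongs to the uniqueness class of Remark~\ref{rem:characterize V_L}, so $v\equiv V_L$. The binding hypothesis is equivalent to the first-order inequality $A(L)(1-\alpha)\le L^{-\alpha}$, and combining this with the closed-form $V(x)=\frac{\zeta}{1-\alpha}x^{1-\alpha}+\frac{\zeta}{\beta}$ of Proposition~\ref{prop:explicit case}, together with the strict ordering $V_L<V$ for $L<c^\ast$ (forced by Proposition~\ref{prop:V>V_L}(i) failing when $L<M$), is the mechanism that must deliver the required contradiction.

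The main obstacle lies squarely in the necessity direction: while the sufficiency reduces cleanly to Proposition~\ref{prop:V>V_L}(i), extracting a genuinely strict algebraic inconsistency from the explicit $A(L)$, $\zeta$, and the first-order binding condition—rather than merely a self-consistent relation such as $L\le c^\ast$—is the delicate step. In practice this will involve comparing the explicit $V_L$ emerging from the binding-ODE ansatz against the explicit $V$ in Proposition~\ref{prop:explicit case} at a convenient $x$ (e.g. sending $x\downarrow 0$ or $x\to\infty$), and ruling out the ansatz via the induced strict inequality on the candidate feedback.
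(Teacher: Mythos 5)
Your sufficiency direction is correct and is exactly the paper's argument: since $\hat c\equiv c^\ast:=\frac{\beta}{\alpha}+(1-\alpha)\bigl(\frac{\mu}{\alpha}+\frac{\sigma^2}{2}\bigr)$ by Proposition~\ref{prop:explicit case}, the bound $L\ge c^\ast$ lets you invoke Proposition~\ref{prop:V>V_L}(i) to get $V_L=V$ and hence $\hat c^L=c^\ast=\hat c\wedge L$. The necessity direction, however, is only a plan in your write-up, and the ``delicate step'' you defer is a genuine gap: no contradiction is ever produced. Worse, carrying your own program to completion shows that none is available. Under the binding hypothesis the ansatz gives $V_L(x)=\zeta_L\bigl(\frac{x^{1-\alpha}}{1-\alpha}+\frac{1}{\beta}\bigr)$ with $\zeta_L=\frac{L^{1-\alpha}}{\beta+(1-\alpha)(\mu+L+\alpha\sigma^2/2)}$, and the unconstrained first-order point $(U')^{-1}(V_L'(x))/x$ is the constant $\zeta_L^{-1/\alpha}$. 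A direct computation shows $\zeta_L^{-1/\alpha}\ge L$ if and only if $L\le c^\ast$. Hence for $L<c^\ast$ the binding ansatz is self-consistent: the candidate solves the full HJB \eqref{HJB V_L}, equals $V_L$ by the uniqueness class of Remark~\ref{rem:characterize V_L}, and yields $\hat c^L\equiv L=\hat c\wedge L$. Your suspicion that the algebra only returns ``a self-consistent relation such as $L\le c^\ast$'' is exactly right.

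This is worth flagging because the paper's own proof of necessity founders at precisely this point. It bounds $\zeta_L\le\frac{L^{1-\alpha}}{L(1-\alpha)}$ and then asserts $\zeta_L^{-1/\alpha}\le\bigl(\frac{L^{1-\alpha}}{L(1-\alpha)}\bigr)^{-1/\alpha}=L(1-\alpha)^{1/\alpha}<L$; but $t\mapsto t^{-1/\alpha}$ is decreasing, so the correct conclusion is $\zeta_L^{-1/\alpha}\ge L(1-\alpha)^{1/\alpha}$, which proves nothing. A numerical check confirms the problem: with $\alpha=\tfrac12$, $\beta=\mu=1$, $\sigma$ small, and $L=1<c^\ast\approx 3$, one gets $\zeta_L\approx\tfrac12$ and $\zeta_L^{-1/\alpha}\approx 4>L$, so $\hat c^L\equiv L=\hat c\wedge L$ even though $L<c^\ast$. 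In short: your proof is incomplete, but the half you could not finish appears to be unprovable as stated --- the ``only if'' implication of the proposition fails on this computation, and only the ``if'' direction (your first paragraph) survives.
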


\begin{proof}
By Proposition~\ref{prop:explicit case}, $\hat c (x)\equiv \frac{\beta}{\alpha}+(1-\alpha)\big(\frac{\mu}{\alpha}+\frac{\sigma^2}{2}\big)$. If $L\ge \frac{\beta}{\alpha}+(1-\alpha)\big(\frac{\mu}{\alpha}+\frac{\sigma^2}{2}\big)$, by Proposition~\ref{prop:V>V_L} we have $V_L = V$ on $(0,\infty)$, which in turn implies $\hat c^L = \hat c = \hat c\wedge L$ on $(0,\infty)$.  On the other hand, if $\hat c^L(x) = \hat c (x) \wedge L$ for all $x>0$, assume to the contrary that $L< \frac{\beta}{\alpha}+(1-\alpha)\big(\frac{\mu}{\alpha}+\frac{\sigma^2}{2}\big)$. Then, $\hat c^L(x) = L$ for all $x>0$. By Proposition~\ref{prop:properties V_L}, $V_L$ is a classical solution to 
\[
\beta v(x)  =  \frac{1}{2} \sigma ^2 x^2 v''(x) + (x^\alpha -\mu x)v'(x) + \frac{(Lx)^{1-\alpha}}{1-\alpha} - Lx v'(x)\quad \forall x>0. 
\]
Take the ansatz $v(x) = a x^{1-\alpha} + b$ for some $a, b\in \R$. By the same argument in Proposition~\ref{prop:explicit case}, we obtain $v(x)=\zeta_L\cdot \left(\frac{x^{1-\alpha}}{1-\alpha} + \frac{1}{\beta}\right)$, where
\[
\zeta_L := \frac{L^{1-\alpha}}{\beta+ (1-\alpha)\big(\mu+L+\frac{\alpha\sigma ^2 }{2} \big)}
\]
By construction, $v(x) = \zeta_L\cdot \big(\frac{x^{1-\alpha}}{1-\alpha} + \frac{1}{\beta}\big)$ is nonnegative, concave, strictly increasing, and satisfies the linear growth condition \eqref{linear growth}. In view of the characterization of $V_L$ in Remark \ref{rem:characterize V_L}, we have $V_L(x) = v(x)$ for all $x>0$. Now, for any $x>0$,
\begin{align*}
\frac{(U')^{-1}(V_{L}'(x))}{x} &= \frac{(\zeta_L x^{-\alpha})^{\frac{-1}{\alpha}}}{x} = \zeta_L ^{\frac{-1}{\alpha}}\le  \left(\frac{L^{1-\alpha}}{ L(1-\alpha)}\right)^{\frac{-1}{\alpha}} = L (1-\alpha)^{\frac{1}{\alpha}} < L,
\end{align*}
which implies $\hat c^L(x) = \min\big\{\frac{(U')^{-1}(V_{L}'(x))}{x} , L\big\} = \frac{(U')^{-1}(V_{L}'(x))}{x}< L$, a contradiction to $\hat c^L(x) = L$. 
\end{proof}


\appendix

\section{Derivation of Proposition~\ref{prop:properties V_L}}\label{sec:appendix}

In this appendix, we will establish Proposition~\ref{prop:properties V_L} by generalizing arguments in \cite{MZ08} to infinite horizon. As mentioned in Section~\ref{sec:V_L}, \cite{MZ08} studies a similar problem to $V_L$ in \eqref{V_L}, yet under finite horizon and with the specific bound $L=1$. As we will see, many arguments in \cite{MZ08} can be modified without much difficulty to infinite horizon. A distinctive exception is the derivation of the dynamic programming principle for $V_L$; see Lemma~\ref{lem:viscosity V_L} below for details.  

\begin{lemma}\label{lem:bound V_L}
\begin{itemize}
\item [(i)] For any $L>0$, $V_L$ is concave on $(0,\infty)$. 
\item [(ii)] There exists $\varphi_0>0$ such that $V_L(x) \le x+ \varphi_0$ for all $x>0$ and $L>0$.
\end{itemize}
\end{lemma}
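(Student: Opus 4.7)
For part (i), I would use the direct admissibility construction. Fix $x_1, x_2 > 0$, $\lambda \in (0,1)$, and arbitrary $c^1, c^2 \in \C_L$; set $X^i := X^{x_i, c^i}$ (existing by Proposition~\ref{prop:existence}), $x := \lambda x_1 + (1-\lambda) x_2$, $\bar X_t := \lambda X^1_t + (1-\lambda) X^2_t$, and $\pi_t := \lambda c^1_t X^1_t + (1-\lambda) c^2_t X^2_t$. Since $\pi_t \le L\bar X_t$, the control $c^{(\lambda)}_t := \pi_t/\bar X_t$ lies in $\C_L$; let $\tilde X := X^{x, c^{(\lambda)}}$. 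The key step is the pathwise comparison $\tilde X_t \ge \bar X_t$ a.s.: $\tilde X$ and $\bar X$ share the multiplicative diffusion $-\sigma y\,dW$, and whenever their trajectories meet the drift of $\tilde X$ exceeds that of $\bar X$ by $\tilde X_t^\alpha - \lambda(X^1_t)^\alpha - (1-\lambda)(X^2_t)^\alpha \ge 0$, by concavity of $u\mapsto u^\alpha$; a one-dimensional SDE comparison theorem then delivers the claim. Consequently $c^{(\lambda)}_t \tilde X_t \ge c^{(\lambda)}_t \bar X_t = \pi_t$, and monotonicity and concavity of $U$ give
\[
U\bigl(c^{(\lambda)}_t \tilde X_t\bigr) \ge U(\pi_t) \ge \lambda U(c^1_t X^1_t) + (1-\lambda) U(c^2_t X^2_t);
\]
integrating against $e^{-\beta t}\,dt$, taking $\E$, and supping over $c^1, c^2$ yields $V_L(x) \ge \lambda V_L(x_1) + (1-\lambda) V_L(x_2)$.

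For part (ii), I would combine the moment estimates of Proposition~\ref{prop:estimates X} with a Fenchel--Young inequality. Fix $c \in \C_L$ and $x > 0$. Applying It\^{o} to $e^{-\beta t} X^x_t$ (the stochastic integral is a true martingale on each $[0,T]$ by the $L^2$ bound in Proposition~\ref{prop:estimates X}) and discarding the nonpositive contributions $-\E[e^{-\beta t} X^x_t]$ and $-(\mu+\beta)\int_0^t e^{-\beta s}\E[X^x_s]\,ds$ gives
\[
\int_0^t e^{-\beta s}\E[c_s X^x_s]\,ds \le x + \int_0^\infty e^{-\beta s}\E[(X^x_s)^\alpha]\,ds.
\]
Since $u^\alpha \le 1+u$ for $u \ge 0$ and $\E[X^x_s] \le 2^{\eta-1}(x + s^\eta)$ by Proposition~\ref{prop:estimates X}, the right-hand side is bounded by $(1+B)x + A$ for constants $A, B > 0$ depending only on $\beta$ and $\eta$. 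The Fenchel--Young bound $U(y) \le py + \tilde U(p)$, with $\tilde U(p) := \sup_{z\ge 0}(U(z) - zp)$, then gives for every $p > 0$,
\[
\E\!\left[\int_0^\infty e^{-\beta t} U(c_t X^x_t)\,dt\right] \le p\bigl[(1+B)x + A\bigr] + \tilde U(p)/\beta.
\]
Choosing $p := 1/(1+B)$ reduces the $x$-coefficient to $1$ and makes $\varphi_0 := pA + \tilde U(p)/\beta$ independent of $c$ and $L$; supping over $c \in \C_L$ concludes.

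The principal obstacle is the comparison $\tilde X \ge \bar X$ in (i): the drift $y^\alpha$ is only H\"{o}lder-continuous at $0$, so the classical Lipschitz comparison result does not apply. Since both processes remain strictly positive by Proposition~\ref{prop:existence} and the diffusion $-\sigma y$ is Lipschitz on compact subsets of $(0,\infty)$, one may invoke a Yamada--Watanabe style comparison argument localized on stopping times of the form $\inf\{t : \tilde X_t \wedge \bar X_t \le 1/N \text{ or } \tilde X_t \vee \bar X_t \ge N\}$ and then let $N \to \infty$. All remaining steps amount to straightforward bookkeeping with the moment estimates of Section~\ref{sec:X}.
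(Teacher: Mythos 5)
Both parts of your proposal are correct. For part (i), the paper simply cites \cite[Theorem 5.1]{MZ08}, and what you write out is essentially that argument: form the convex combination $\bar X$, observe that $c^{(\lambda)}:=\pi/\bar X$ is admissible and bounded by $L$, and use concavity of $y\mapsto y^\alpha$ plus a one-dimensional comparison theorem to get $\tilde X\ge\bar X$. You are right that the only delicate point is the comparison with a H\"older drift; localizing to $[1/N,N]$ (where $y^\alpha$ is Lipschitz and $\sigma y$ satisfies the Yamada condition) and using strict positivity plus non-explosiveness to send $N\to\infty$ is a legitimate way to close it, and note that the version of the comparison theorem you need must allow $\bar X$ to be a general It\^o process whose drift is dominated by the SDE drift evaluated along its own path (e.g.\ the Ikeda--Watanabe form), since the drift of $\bar X$ is not a function of $\bar X_t$.

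For part (ii) your route genuinely differs from the paper's. The paper applies It\^o once to the candidate supersolution $\varphi(x)=x+\varphi_0$ and chooses $\varphi_0$ so that $-\beta\varphi(x)+(x^\alpha-\mu x)+\sup_{y\ge0}\{U(y)-y\}<0$, which absorbs the utility term in a single step (effectively Fenchel--Young with the dual variable fixed at $p=1$). You instead first derive a discounted budget constraint $\int_0^\infty e^{-\beta t}\,\E[c_tX^x_t]\,dt\le(1+B)x+A$ from It\^o applied to $e^{-\beta t}X_t$ and the first moment estimate, and only then apply Fenchel--Young with $p=1/(1+B)$ tuned to make the $x$-coefficient equal to $1$. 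Both arguments produce a $\varphi_0$ independent of $c$, $L$ and $x$, both rely on the same $L^2$ estimate to justify the martingale property of the stochastic integral, and both implicitly require $\beta>0$ (your $\int_0^\infty e^{-\beta s}s^\eta\,ds$ and $\tilde U(p)/\beta$, the paper's choice of $\varphi_0$). The paper's version is slightly more economical; yours has the minor advantage of isolating the economically meaningful budget inequality as an intermediate statement. No gaps.
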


\begin{proof}
(i) This follows from the same argument in \cite[Theorem 5.1]{MZ08}.

(ii) We will prove this result by modifying the argument in the first part of \cite[Lemma 3.2]{MZ08}. Define $\varphi(x) := x +\varphi_0$ with $\varphi_0>0$ to be determined later. Fix $L>0$. For any $c\in\C_L$, $x>0$, and $T>0$, It\^{o}'s formula implies
\begin{equation}\label{la}
0 \leq \E[e^{-\beta T} \varphi(X^x_{T})] = \varphi(x) + \E\bigg[\int_{0}^{T} e^{ -\beta s} (-\beta \varphi(X^x_s) + (X^x_s)^{\alpha} - \mu X^x_s - c_s X^x_s) ds \bigg]. 
\end{equation}
Note that $-\E[\int_{0}^{T}e^{-\beta s}\sigma X_s \ dW_s]$ disappears from the above inequality because $\int_{0}^{\cdot} e^{-\beta s}\sigma X_s dW_s$ is a martingale, thanks to the second part of \eqref{eq2.5}. By \eqref{U2} and $\mu>0$, we have $\sup_{y\ge 0}\{U(y)-y\}<\infty$ and $A:= \sup_{x\ge 0}\{x^\alpha - \mu x\}<\infty$. We can therefore take $\varphi_0>0$ large enough such that 
\begin{equation}\label{choice}
-\beta \varphi(x) + (x^{\alpha} - \mu x) +  \sup_{y\ge 0}\{U(y)-y\} \leq -\beta \varphi_0 + A \ +  \sup_{y\ge 0}\{U(y)-y\} < 0,\quad x\ge 0.
\end{equation}  
This, together with \eqref{la}, yields
\begin{align*} 
0 \leq \E[e^{-\beta T} \varphi(X^x_{T})] 
&\leq \varphi(x) - \E\bigg[\int_{0}^{T} e^{ -\beta s}U(c_s X^x_s) ds\bigg],
\end{align*}
Hence, by using Fatou's lemma as $T\to\infty$ and then taking supremum over $c\in\C_L$, we get the desired result $V_L(x)\le \varphi(x)$. Finally, note that our choice of $\varphi_0>0$ can be made independent of both $L>0$ and $x>0$. Indeed, the right hand side of \eqref{choice}, which involves $\varphi_0$, does not depend on either $L$ or $x$. 
\end{proof}

Next, we derive the dynamic programming principle for $V_L$, to show that it is a viscosity solution. As explained in detail under \eqref{DPP'}, arguments in \cite{MZ08} only lead us to a {\it weak} dynamic programming principle. Additional probabilistic arguments are invoked to upgrade this weak principle. 

\begin{lemma}\label{lem:viscosity V_L}
For any $L>0$, $V_L$ is a continuous viscosity solution to \eqref{HJB V_L}.
\end{lemma}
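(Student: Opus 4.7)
The plan is: first, note that the concavity of $V_L$ on $(0,\infty)$ in Lemma~\ref{lem:bound V_L}(i) yields continuity on $(0,\infty)$ for free; second, derive the full dynamic programming principle (DPP) for $V_L$ at $\F$-stopping times; third, feed the DPP into the standard test-function machinery to verify the viscosity subsolution and supersolution properties of \eqref{HJB V_L}.

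The heart of the argument is the DPP: for every $x>0$ and every stopping time $\tau$,
\[
V_L(x) \;=\; \sup_{c\in\C_L} \E\!\left[\int_0^\tau e^{-\beta t} U(c_t X^x_t)\,dt + e^{-\beta\tau} V_L(X^x_\tau)\right].
\]
For the ``$\leq$'' inequality I would fix $c\in\C_L$, split the reward at $\tau$, invoke the strong Markov property together with the concatenation operator $\omega\otimes_t\bar\omega$ from \eqref{concatenate} to rewrite the post-$\tau$ tail as an $\F_\tau$-conditional expectation, and bound it by $V_L(X^x_\tau)$; Lemma~\ref{lem:bound V_L}(ii), combined with the moment bound \eqref{eq2.5}, provides the integrability needed to split the cost and to justify Fubini. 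For the ``$\geq$'' inequality I would paste any given $c\in\C_L$ on $[0,\tau]$ with an $\eps$-optimal continuation starting from $X^x_\tau$. Directly imitating \cite{MZ08} only produces a weak DPP, in which $V_L$ on the right is replaced by its lower semicontinuous envelope $V_{L,*}$, because the family $\{c^\eps_y\}_{y>0}$ of $\eps$-optimal controls need not depend measurably on the starting point $y$.

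The upgrade to the full DPP is the main obstacle, and I would handle it by a measurable countable-pasting argument: partition $(0,\infty)$ into small Borel sets $\{A_n\}$ with representative points $y_n\in A_n$, paste $c^\eps_{y_n}$ onto $c$ on $\{X^x_\tau\in A_n\}$ via $\omega\otimes_\tau\bar\omega$, and let the mesh shrink. The error between $V_L(X^x_\tau)$ and $V_L(y_n)$ on each $\{X^x_\tau\in A_n\}$ is controlled by the continuity of $V_L$ together with the pathwise estimate \eqref{eq2.7} and the $L^1$-bound \eqref{eq2.5}. Because continuity already forces $V_{L,*}=V_L$ on $(0,\infty)$, this measurable refinement of the \cite{MZ08} argument closes the gap and delivers the full DPP.

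With the DPP in hand, the viscosity property is standard. For the subsolution inequality at $x_0\in(0,\infty)$, take $\varphi\in C^2$ touching $V_L$ from above at $x_0$, apply the DPP with the constant control $c\equiv c_0\in[0,L]$ on $[0,h]$, expand $e^{-\beta t}\varphi(X^{x_0}_t)$ by It\^o's formula, divide by $h$, let $h\downarrow 0$, and finally take $\sup_{c_0\in[0,L]}$ to recover $\tilde U_L(x_0,\varphi'(x_0))$. For the supersolution inequality, touch $V_L$ from below with $\varphi$ and insert a near-optimal control into the DPP on $[0,h]$; the moment bounds of Proposition~\ref{prop:estimates X} justify all exchanges of limit and expectation. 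Combining the two inequalities yields that $V_L$ is a viscosity solution to \eqref{HJB V_L}, and the continuity established at the outset completes the proof.
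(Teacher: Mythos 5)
Your overall architecture (continuity from concavity, then a dynamic programming principle, then the standard test-function verification) matches the paper, and your treatment of the ``$\le$'' half of the DPP via the concatenation identity is essentially the paper's \eqref{DPP <}. Where you genuinely diverge is on the ``$\ge$'' half at stopping times, and there your proposal both takes a different route and leaves a gap. The paper does \emph{not} upgrade a weak DPP by measurable countable pasting. It first obtains \eqref{DPP'}, the DPP at \emph{deterministic} times only, by following \cite{MZ08}; it then shows that $\int_0^t e^{-\beta s}U(c_sX_s)\,ds+e^{-\beta t}V_L(X_t)$ is a supermartingale (combining \eqref{DPP'} with the conditioning identity of \cite{BH13-game}), and finally applies the optional sampling theorem at $\tau\wedge T$ and Fatou's lemma as $T\to\infty$ to reach \eqref{DPP} for arbitrary stopping times. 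This route entirely sidesteps the need to control errors at a random, possibly unbounded, time.

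The obstacle you identify (non-measurable selection of $\eps$-optimal controls, hence a lower semicontinuous envelope) is not the obstacle the paper faces. The real difficulty, flagged explicitly in the paper, is that on an infinite horizon there is no estimate for $\E[\sup_{0\le t<\infty}X_t^2]$, so the error terms in the \cite{MZ08} argument cannot be controlled simultaneously over all stopping times; only the fixed-time estimates \eqref{eq2.5} and \eqref{eq2.7} are available. Your countable-pasting scheme runs straight into this: to bound $\sum_n\E\bigl[\mathbf{1}_{A_n}(X_\tau)\,|J(X_\tau,c^n)-J(y_n,c^n)|\bigr]$ you must use \eqref{eq2.7}, whose error has the form $C_\eps|x-y|+\eps(x+y+t^\eta)$ and therefore degenerates as the starting points grow; ``letting the mesh shrink'' does not suffice, and one would need an adaptive partition with piece-dependent $\eps_n$ near $0$ and near $\infty$, plus a separate treatment of $\{\tau=\infty\}$. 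This can probably be pushed through, but it is precisely the hard step and your proposal does not supply it, whereas the supermartingale/optional-sampling device resolves it in three lines. A secondary point: you have swapped the two halves of the viscosity verification. For a supremum problem, the argument ``fix a constant control $c_0$, apply the DPP, It\^o, divide by $h$, then take $\sup_{c_0}$'' with $\varphi$ touching from \emph{below} yields the \emph{supersolution} inequality; the \emph{subsolution} inequality uses $\varphi$ touching from above together with the ``$\le$'' direction of the DPP over \emph{all} admissible controls, bounding $U(c_tX_t)-c_tX_t\varphi'(X_t)$ by $\tilde U_L(X_t,\varphi'(X_t))$. Inserting a ``near-optimal control'' plays no role in either half.
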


\begin{proof}
Fix $L>0$. The continuity of $V_L$ on $(0,\infty)$ is a direct consequence of Lemma~\ref{lem:bound V_L} (i). In view of \cite[Chapter V]{FS-book-06} and \cite[Chapter 4]{Pham-book-09}, to prove the viscosity solution property, it suffices to show the following dynamic programming principle: for any $x>0$,
\begin{equation*}
V_L(x) = \sup_{c\in\C_L} \E\left[\int_{0}^{\tau} e^{-\beta t} U(c_t X^x_t) dt + e^{-\beta \tau }V_L(X^x_\tau)\right],\quad \forall \tau\in \T, 
\end{equation*}
where $\T$ denotes the set of all stopping times. The ``$\le$'' relation is straightforward to derive. Indeed, given $c\in\C_L$, we have, for any $\tau\in\T$, that 
\begin{align}
\E\left[\int_{0}^{\infty} e^{-\beta t} U(c_t X^x_t) dt \right] &= \E\left[\int_{0}^{\tau} e^{-\beta t} U(c_t X^x_t) dt + e^{-\beta \tau }\E\left[\int_{\tau}^{\infty} e^{-\beta (t-\tau)} U(c_t X^x_t) dt\ \middle|\ \F_\tau\right] \right]\notag\\
&= \E\left[\int_{0}^{\tau} e^{-\beta t} U(c_t X^x_t) dt + e^{-\beta \tau }\E\left[\int_{\tau(\omega)}^{\infty} e^{-\beta (t-\tau(\omega))} U(c^{\tau,\omega}_{t-\tau} X^{X^x_\tau(\omega)}_{t-\tau}) dt\right] \right]\notag\\
&= \E\left[\int_{0}^{\tau} e^{-\beta t} U(c_t X^x_t) dt + e^{-\beta \tau }\E\left[\int_{0}^{\infty} e^{-\beta t} U(c^{\tau,\omega}_t X^{X^x_\tau(\omega)}_t) dt\right] \right]\notag\\
&\le \E\left[\int_{0}^{\tau} e^{-\beta t} U(c_t X^x_t) dt + e^{-\beta \tau }V_L(X^x_\tau)\right].\label{DPP <}
\end{align}
Here, the second line follows from \cite[Proposition A.1]{BH13-game}, with $c^{\tau,\omega}\in\C_L$ defined by $c_s^{\tau,\omega}(\bar\omega):= c_{\tau(\omega)+s}(\omega\otimes_{\tau(\omega)}\bar\omega)$, $s\ge 0$, for each fixed $\omega\in\Omega$; recall \eqref{concatenate}. The third line, on the other hand, follows from the definition of $V_L$. Now, taking supremum over $c\in\C_L$ gives the desired ``$\le$'' relation. 

The rest of the proof focuses on deriving the converse inequality
\begin{equation}\label{DPP}
V_L(x) \ge \sup_{c\in\C_L} \E\left[\int_{0}^{\tau} e^{-\beta t} U(c_t X^x_t) dt + e^{-\beta \tau }V_L(X^x_\tau)\right],\quad \forall \tau\in \T. 
\end{equation}
Following the arguments in \cite[Theorem 3.3]{MZ08} and using the estimates in \eqref{eq2.5} and \eqref{eq2.7}, we can derive a weaker version of \eqref{DPP}:
\begin{equation}\label{DPP'}
V_L(x) \ge \sup_{c\in\C_L} \E\left[\int_{0}^{r} e^{-\beta t} U(c_t X^x_t) dt + e^{-\beta r }V_L(X^x_r)\right],\quad \forall r\ge 0. 
\end{equation}
Note that the arguments in \cite[Theorem 3.3]{MZ08} directly give the stronger statement \eqref{DPP} under finite horizon $T>0$, with $\T$ replaced by $\T_T$, the set of stopping times taking values in $[0,T]$ a.s. The same arguments, however, only render the weaker statement \eqref{DPP'} under infinite horizon. This is because with finite horizon $T>0$, one can derive an estimate for $\E[\sup_{0\le t\le T} X^2_t]$, i.e. (2.7) in \cite{MZ08}, which ensures that (3.14) in \cite{MZ08} holds {\it simultaneously} for all $\tau\in\T_T$. When the time horizon is infinite, one would need a corresponding estimate for $\E[\sup_{0\le t<\infty} X^2_t]$, which is often unavailable. In our case, we only have the estimates \eqref{eq2.5} and \eqref{eq2.7}, which ensure that (3.14) in \cite{MZ08} holds only for {\it each} deterministic time $r\ge 0$.  

In the following, we will show that the weaker statement \eqref{DPP'} in fact implies \eqref{DPP}. 
First, we claim that for any $c\in\C_L$ and $x>0$, the process $\int_0^t e^{-\beta s} U(c_s X^x_s) ds + e^{-\beta t}V_L(X^x_t)$, $t\ge 0$, is a supermartingale. Given $0\le r\le t$, it holds for a.e. $\omega\in\Omega$ that  
\begin{align*}
&\E\left[\int_0^t e^{-\beta s} U(c_s X^x_s) ds + e^{-\beta t}V_L(X^x_t)\ \middle|\ \F_r\right](\omega)\\
 = &\int_0^r e^{-\beta s} U(c_s X^x_s) ds(\omega) + e^{-\beta r}\E\left[\int_r^t e^{-\beta (s-r)} U(c_s X^x_s) ds + e^{-\beta (t-r)}V_L(X^x_t)\ \middle|\ \F_r\right] (\omega) \\
 = &\int_0^r e^{-\beta s} U(c_s X^x_s) ds(\omega) + e^{-\beta r}\E\left[\int_r^t e^{-\beta (s-r)} U(c^{r,\omega}_{s-r} X^{X^x_r(\omega)}_{s-r}) ds + e^{-\beta (t-r)}V_L\left(X^{X^x_r(\omega)}_{t-r}\right)\right]\\ 
 = &\int_0^r e^{-\beta s} U(c_s X^x_s) ds(\omega) + e^{-\beta r}\E\left[\int_0^{t-r} e^{-\beta s} U\left(c^{r,\omega}_s X^{X^x_r(\omega)}_s\right) ds + e^{-\beta (t-r)}V_L\left(X^{X^x_r(\omega)}_{t-r}\right)\right],
\end{align*}
where the third line follows from \cite[Proposition A.1]{BH13-game}, with $c^{r,\omega}\in\C_L$ defined by $c_s^{r,\omega}(\bar\omega):= c_{r+s}(\omega\otimes_{r}\bar\omega)$, $s\ge 0$, for each fixed $\omega\in\Omega$; recall \eqref{concatenate}. This, together with \eqref{DPP'}, yields
\[
\E\left[\int_0^t e^{-\beta s} U(c_s X^x_s) ds + e^{-\beta t}V_L(X^x_t)\ \middle|\ \F_r\right] \le \int_0^r e^{-\beta s} U(c_s X^x_s) ds + e^{-\beta r} V_L(X^x_r)\quad \hbox{a.s.}
\]
This shows the desired supermartingale property. Now, for any $x>0$ and $\tau\in\T$, by the optional sampling theorem, 
\[
V_L(x)\ge \E\left[\int_0^{\tau\wedge T} e^{-\beta s} U(c_s X^x_s) ds + e^{-\beta (\tau\wedge T)}V_L(X^x_{\tau\wedge T})\right],\quad \forall\ T>0.
\]
As $T\to\infty$, thanks to Fatou's lemma and the continuity of $V_L$, we obtain \eqref{DPP}. 
\end{proof}

The next comparison result follows directly from the argument in \cite[Theorem 4.1]{MZ08}. The argument is in fact slightly simpler here, as the time variable is not involved in our infinite-horizon setup; see also a very similar proof in \cite[Proposition 4.1]{Liu14} for a related infinite-horizon problem. 

\begin{lemma}\label{lem:comparison}
Fix $L>0$. For any $0<a<b$, if $u_1, u_2 \in C([a,b])$ are two viscosity solutions to 
\begin{equation}\label{BV V_L}
\beta v(x)  =  \frac{1}{2} \sigma ^2 x^2v''( x) + (x^\alpha -\mu x)v'(x) + \tilde U_L (x, v'(x))  \quad \text{for}\ x\in(a,b),
\end{equation}
with $u_1(a)=u_2(a)$ and $u_1(b)=u_2(b)$, then $u_1 \equiv u_2$. 
\end{lemma}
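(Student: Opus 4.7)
The plan is to apply the standard comparison-by-doubling-of-variables technique of Crandall, Ishii and Lions, following \cite[Theorem~4.1]{MZ08} in a simpler (time-independent) setup. Assume for contradiction that $M:=\max_{[a,b]}(u_1-u_2)>0$; the boundary matching forces this maximum to be attained at some interior point $x_0\in(a,b)$, and by symmetry it suffices to derive a contradiction from this.

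For each $\epsilon>0$, I would consider the doubled test function
$$\Phi_\epsilon(x,y):=u_1(x)-u_2(y)-\tfrac{(x-y)^2}{2\epsilon}\quad\hbox{on}\ [a,b]^2,$$
pick a maximizer $(x_\epsilon,y_\epsilon)$, and use the standard estimates $(x_\epsilon-y_\epsilon)^2/\epsilon\to 0$ and $(x_\epsilon,y_\epsilon)\to(x_0,x_0)\in(a,b)^2$ (along a subsequence), so that both points lie in the interior for small $\epsilon$. Setting $p_\epsilon:=(x_\epsilon-y_\epsilon)/\epsilon$, Ishii's lemma produces real numbers $X_\epsilon,Y_\epsilon$ in the closed second-order superjet of $u_1$ at $x_\epsilon$ and the closed subjet of $u_2$ at $y_\epsilon$ (both with gradient part $p_\epsilon$), satisfying the matrix bound $x_\epsilon^2 X_\epsilon-y_\epsilon^2 Y_\epsilon\le\tfrac{3}{\epsilon}(x_\epsilon-y_\epsilon)^2$. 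Subtracting the subsolution inequality for $u_1$ and the supersolution inequality for $u_2$ yields
\begin{align*}
\beta\bigl(u_1(x_\epsilon)-u_2(y_\epsilon)\bigr)&\le \tfrac{1}{2}\sigma^2\bigl(x_\epsilon^2 X_\epsilon-y_\epsilon^2 Y_\epsilon\bigr)+\bigl[(x_\epsilon^\alpha-y_\epsilon^\alpha)-\mu(x_\epsilon-y_\epsilon)\bigr]p_\epsilon\\
&\qquad+\tilde U_L(x_\epsilon,p_\epsilon)-\tilde U_L(y_\epsilon,p_\epsilon).
\end{align*}

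Two of the three terms on the right are routine: the second-order one is dominated by $\tfrac{3\sigma^2}{2\epsilon}(x_\epsilon-y_\epsilon)^2\to 0$ via the matrix bound, and the drift one is $O((x_\epsilon-y_\epsilon)^2/\epsilon)\to 0$ by the Lipschitz continuity of $x\mapsto x^\alpha-\mu x$ on the compact set $[a,b]\subset(0,\infty)$. The chief obstacle is the nonlinearity term $\tilde U_L(x_\epsilon,p_\epsilon)-\tilde U_L(y_\epsilon,p_\epsilon)$, because $p_\epsilon$ need not remain bounded as $\epsilon\downarrow 0$. My plan for controlling it is to use the change of variable $\tilde c=cx$ to rewrite $\tilde U_L(x,p)=\sup_{0\le\tilde c\le Lx}\{U(\tilde c)-\tilde c p\}$; an envelope-type computation then gives $\partial_x\tilde U_L(x,p)=L[U'(Lx)-p]^+$, so
$$|\tilde U_L(x_\epsilon,p_\epsilon)-\tilde U_L(y_\epsilon,p_\epsilon)|\le L\bigl(U'(La)+|p_\epsilon|\bigr)\,|x_\epsilon-y_\epsilon|,$$
using $U'(Lx)\le U'(La)<\infty$ on $[a,b]$. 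Since $|x_\epsilon-y_\epsilon|\to 0$ and $|p_\epsilon||x_\epsilon-y_\epsilon|=(x_\epsilon-y_\epsilon)^2/\epsilon\to 0$, this term also vanishes.

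Sending $\epsilon\downarrow 0$ along the chosen subsequence, the left-hand side tends to $\beta M>0$ while the right-hand side tends to $0$, a contradiction. Hence $u_1\le u_2$ on $[a,b]$; reversing the roles of $u_1$ and $u_2$ gives the reverse inequality, and therefore $u_1\equiv u_2$.
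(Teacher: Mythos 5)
Your proposal is correct and follows essentially the same route as the paper, which simply defers to the doubling-of-variables comparison argument of \cite[Theorem 4.1]{MZ08} (noting it simplifies in the time-independent setting); your envelope bound $\partial_x\tilde U_L(x,p)=L[U'(Lx)-p]^+$ correctly supplies the Lipschitz-in-$x$ estimate for $\tilde U_L$ on $[a,b]$ that the cited argument relies on, and the cancellation $|p_\epsilon|\,|x_\epsilon-y_\epsilon|=(x_\epsilon-y_\epsilon)^2/\epsilon\to 0$ handles the unbounded gradient exactly as needed. The only implicit assumption is $\beta>0$ (the paper states $\beta\ge 0$ but uses $\beta>0$ throughout, e.g.\ in Lemma~\ref{lem:bound V_L}), so this is not a defect of your argument.
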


Now, we are ready to prove Proposition~\ref{prop:properties V_L}.

\begin{proof}[Proof of Proposition~\ref{prop:properties V_L}]
In view of Lemma~\ref{lem:bound V_L}, it remains to show that $V_L$ belongs to $C^2((0,\infty))$ and solves \eqref{HJB V_L}. 
For any $0<a<b$, consider the boundary value problem \eqref{BV V_L} with $v(a)=V_L(a)$ and $v(b) =  V_L(b)$. 
Thanks to the boundedness of $c\in\C_L$, the same estimate for $|\tilde U_L (x_1, p_1)-\tilde U_L (x_2, p_2)|$ in \cite[Theorem 4.2]{MZ08} still holds, which means that the condition (5.18) in \cite{Morimoto-book-10} is true under current setting. We then conclude from \cite[Theorem 5.3.7]{Morimoto-book-10} that there exists a classical solution $v\in C^2((a,b))\cap C([a,b])$ to \eqref{BV V_L}. Since $v$ is also a viscosity solution, Lemmas~\ref{lem:viscosity V_L} and \ref{lem:comparison} imply that $V_L = v$ on $[a,b]$, and thus $V_L\in C^2((a,b))$. With $0<a<b$ arbitrarily chosen, we have $V_L\in C^2((0,\infty))$ and solves \eqref{HJB V_L} in the classical sense. 
\end{proof}


\end{document}